\documentclass[a4paper]{amsart}
\usepackage{amsmath,amsthm,amssymb}
\usepackage[pdftex]{graphicx}
\usepackage{mathrsfs}
\usepackage[dvipdfmx, usenames]{color}
\usepackage[all]{xy}
\usepackage{comment}
\usepackage{bm}

\newtheorem{thm}{Theorem}[section]
\newtheorem{lem}[thm]{Lemma}
\newtheorem{cor}[thm]{Corollary}
\newtheorem{prop}[thm]{Proposition}

\theoremstyle{definition}

\newtheorem{example}[thm]{Example}

\newtheorem{defn}[thm]{Definition}
\newtheorem{assump}[thm]{Assumption}

\theoremstyle{remark}

\newtheorem{rem}[thm]{Remark}        

\renewcommand{\theequation}{
\thesection.\arabic{equation}}
\numberwithin{equation}{section}

\def\XXint#1#2#3{{\setbox0=\hbox{$#1{#2#3}{\int}$}
\vcenter{\hbox{$#2#3$}}\kern-.5\wd0}}


\newcommand{\R}{\mathbb{R}}

\newcommand{\m}{\mathfrak{m}}
\newcommand{\Lip}{\mathsf{Lip}}
\newcommand{\Cpl}{\mathsf{Cpl}}
\newcommand{\di}{\mathsf{Diam}\,}
\newcommand{\supp}{\mathsf{supp}\,}

\newcommand{\Ch}{\mathsf{Ch}}
\newcommand{\vol}{\mathsf{vol}}
\newcommand{\RCD}{\mathsf{RCD}}
\let\CD\relax
\newcommand{\CD}{\mathsf{CD}}
\newcommand{\ul}{\underline}
\newcommand{\ol}{\overline}
\newcommand{\la}{\left\langle}
\newcommand{\ra}{\right\rangle}
\newcommand{\del}{\partial}

\newcommand{\lv}{\left\vert}
\newcommand{\rv}{\right\vert}
\newcommand{\lV}{\left\Vert}
\newcommand{\rV}{\right\Vert}

\newcommand{\Ent}{\mathsf{Ent}}
\newcommand{\Tan}{\mathsf{Tan}}
\newcommand{\EVI}{\mathsf{EVI}}

\newcommand{\Var}{\mathsf{Var}}
\newcommand{\AVR}{\mathsf{AVR}}
\newcommand{\Hess}{\mathrm{Hess}}

\newcommand{\calD}{\mathcal{D}}

\newcommand{\calH}{\mathcal{H}}

\newcommand{\calL}{\mathcal{L}}

\newcommand{\calP}{\mathcal{P}}

\newcommand{\calR}{\mathcal{R}}

\newcommand{\bbN}{\mathbb{N}}

\newcommand{\bbS}{\mathbb{S}}

\newcommand{\sfb}{\mathsf{b}}

\newcommand{\fraks}{\mathfrak{s}}

\DeclareMathOperator{\argmin}{\mathrm{argmin}}

\usepackage[abbrev,nobysame]{amsrefs}

\makeatletter
\def\@makefnmark{%
\leavevmode
\raise.9ex\hbox{\check@mathfonts
\fontsize\sf@size\z@\normalfont%
\@thefnmark}%
}
\makeatother

\title{Shannon's inequality on non-collapsing $\RCD$ spaces}
\author{Yu Kitabeppu}
\address{Kumamoto University}
\email[Yu Kitabeppu]{ybeppu@kumamoto-u.ac.jp}

\begin{document}
\maketitle
 \begin{abstract}
  We prove the Shannon's inequality on non-collapsing $\RCD(0,N)$ spaces. In the proof, we use the characterization of the $\EVI_{0,N}$-gradient flow of the relative entropy and the infinitesimal behavior of the heat kernel. Also we have a cone rigidity result. As an application, we have the so-called uncertainty principle inequality on such spaces. 
 \end{abstract}
%
%
\section{Introduction}
Shannon proved the following inequality; 
 \begin{thm}[\cite{Sh}]
  For any nonnegative function $f\in L^1(\R^n)$ with $\lv x\rv^2f(x)\in L^1(\R^n)$ and with $\lV f\rV_1=1$, it holds that 
  \begin{align}
   -\int_{\R^n}f(x)\log f(x)\,dx\leq \frac{n}{2}\log\left(\frac{2\pi e}{n}\int_{\R^n}\lv x\rv^2f(x)\,dx\right).\label{eq:Sh}
  \end{align}
  The inequality is sharp since the Gaussian functions $G_t(x)=(4\pi t)^{-n/2}e^{-\lv x\rv^2/4t}$ attain the equality in (\ref{eq:Sh}). 
 \end{thm}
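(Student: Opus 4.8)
The plan is to derive \eqref{eq:Sh} from the non-negativity of relative entropy against a Gaussian with a free variance parameter, and to optimize that parameter at the end. Throughout write $H(f):=-\int_{\R^n}f\log f\,dx$ and $V:=\int_{\R^n}|x|^2f(x)\,dx<\infty$, and for $\sigma>0$ let $g_\sigma(x):=(2\pi\sigma^2)^{-n/2}e^{-|x|^2/(2\sigma^2)}$ be the centered Gaussian density of variance $\sigma^2$ in each coordinate.

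First I would record the elementary identity
\[
-\int_{\R^n}f\log g_\sigma\,dx=\frac{n}{2}\log(2\pi\sigma^2)+\frac{1}{2\sigma^2}\int_{\R^n}|x|^2f\,dx=\frac{n}{2}\log(2\pi\sigma^2)+\frac{V}{2\sigma^2},
\]
which is finite precisely because of the hypothesis $V<\infty$. The crux is then Gibbs' inequality: writing the Kullback--Leibler divergence as $D(f\|g_\sigma)=\int_{\R^n}\Phi(f/g_\sigma)\,g_\sigma\,dx$ with the convex function $\Phi(u):=u\log u\ge-1/e$, Jensen's inequality applied to the probability measure $g_\sigma\,dx$ gives $D(f\|g_\sigma)\ge\Phi\big(\int_{\R^n}f\,dx\big)=\Phi(1)=0$. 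Since $D(f\|g_\sigma)=-H(f)-\int_{\R^n}f\log g_\sigma\,dx$, this yields, for every $\sigma>0$,
\[
H(f)\le\frac{n}{2}\log(2\pi\sigma^2)+\frac{V}{2\sigma^2}.
\]

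To finish I would minimize the right-hand side in the single variable $s:=\sigma^2$. Differentiating $\frac{n}{2}\log(2\pi s)+\frac{V}{2s}$ shows the minimum is attained at $s=V/n$, and substituting this value gives
\[
H(f)\le\frac{n}{2}\log\!\Big(\frac{2\pi V}{n}\Big)+\frac{n}{2}=\frac{n}{2}\log\!\Big(\frac{2\pi e}{n}V\Big),
\]
using $\tfrac{n}{2}=\tfrac{n}{2}\log e$; this is exactly \eqref{eq:Sh}. Sharpness comes from the equality case of Gibbs' inequality, which forces $f=g_\sigma$ with $\sigma^2=V/n$; a direct check identifies these extremizers with the heat kernels $G_t$, since $G_t=g_\sigma$ for $\sigma^2=2t$.

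The main obstacle is not the computation but the measure-theoretic bookkeeping: ensuring $H(f)$ is well defined and that Gibbs' inequality remains valid even when $D(f\|g_\sigma)=+\infty$. The bound $\Phi\ge-1/e$ handles this cleanly, since the negative part of the integrand $\Phi(f/g_\sigma)g_\sigma$ is dominated by $g_\sigma/e\in L^1(\R^n)$, so $D(f\|g_\sigma)$ is well defined in $(-\infty,+\infty]$ and Jensen applies with no integrability assumption on the positive part. Because $\int_{\R^n}f\log g_\sigma\,dx$ is finite, the resulting bound simultaneously shows $H(f)<+\infty$ and that the negative part of $f\log f$ is integrable, so no separate finiteness argument for the entropy is required.
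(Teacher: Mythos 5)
Your proof is correct, but it follows a genuinely different route from the paper. You use the classical information-theoretic argument: nonnegativity of the relative entropy $D(f\|g_\sigma)\ge 0$ against a family of centered Gaussians (via Jensen's inequality for $\Phi(u)=u\log u$), followed by optimization over the variance parameter $\sigma^2=V/n$. The paper instead proves the statement as a warm-up for its main theorem, viewing $\R^n$ as an $\RCD(0,n)$ space: it takes the heat flow $\mu_t=p^{\R^n}(\omega,\cdot,t)\calL^n$ started at a barycenter $\omega$ of $\nu=f\calL^n$, integrates the $\EVI_{0,n}$ inequality (2.2) in time to get $U_n(\nu)\le F(t)^{-1}\bigl(t+\theta^2/2n\bigr)$ with $F(t)=\int_0^t U_n(\mu_s)^{-1}\,ds$, computes $U_n(\mu_t)=\sqrt{4\pi e t}$ and $F(t)=\sqrt{t/\pi e}$ explicitly, and substitutes $t=\theta^2/2n$. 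The trade-off: your argument is more elementary and self-contained, and your handling of integrability (bounding the negative part of $\Phi(f/g_\sigma)g_\sigma$ by $g_\sigma/e$ so that Gibbs' inequality and finiteness of the entropy come for free) is cleaner than anything the paper spells out; but it leans on the explicit Gaussian density and the algebraic splitting $f\log f=f\log(f/g_\sigma)+f\log g_\sigma$, so it does not survive the passage to non-smooth spaces. The paper's $\EVI$ route is precisely the one that generalizes — Lemmas \ref{lem:theta}, \ref{lem:key1}, \ref{lem:key2} replace the explicit computations of $U_n(\mu_t)$ and $F(t)$ on a non-collapsing $\RCD(0,N)$ space — which is why the paper adopts it even in $\R^n$. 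A minor further difference: the paper's choice of barycenter yields the inequality with $\Var(\nu)$, which is slightly stronger than the second moment about the origin appearing in the statement; your centered-Gaussian version proves the statement as literally written (and would give the $\Var$ form too if you centered $g_\sigma$ at the mean of $f$).
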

In the information theory, the inequality (\ref{eq:Sh}) has important meaning, see \cite{Sh}. Shannon's inequality is extended to various settings. For example, Shannon's inequality for R\'enyi entropy is known in $\R^n$ \cite{Suguro}. 
One of the main purpose in this article is to prove a similar inequality in more general settings. 
More precisely, we consider the inequality (\ref{eq:Sh}) on so-called \emph{$\RCD$ spaces}. 
\smallskip 
\par $\RCD$ space is a metric measure space equipped with the Riemannian structure and of Ricci curvature bounded from below and dimension bounded from above in a synthetic sense. Lott-Villani (\cite{LV} for $K\in \R$ and $N=\infty$ and for $K=0$ and finite $N$) and Sturm (\cite{St1,St2} for $K\in \R$ and $N\in(1,\infty]$) defined the $\CD(K,N)$ condition (curvature-dimension condition) on metric measure spaces independently. They define curvature and dimension bound in terms of the convexity of the relative entropy ($N=\infty$) and that of the R\'enyi entropy ($N<\infty$) or that of a class of functionals on the space of probability measures, see \cite{LV,St1,St2}.
 Bacher-Sturm defined the \emph{reduced curvature dimension condition} $\CD^*(K,N)$ in order to guarantee the localization property of the curvature condition \cite{BaSt}. They also considered the tensorial property under the non-branching assumption. But that property needs the infinitesimally Hilbertian condition (see \cite{Stcor}). $\RCD(K,\infty)$ condition is defined by Ambrosio-Gigli-Savar\'e \cite{AGSmm} (for compact metric measure spaces) and by Ambrosio-Gigli-Mondino-Rajala \cite{AGMR} (for metric measure spaces with $\sigma$-finite measures), those which spaces satisfy $\CD(K,\infty)$ condition and the infinitesimally Hilbertian condition which is defined by Gigli (\cite{G}). For finite $N$, Erbar-Kuwada-Sturm \cite{EKS} and Ambrosio-Mondino-Savar\'e \cite{AMSnon} defined $\RCD^*(K,N)$ condition independently. Roughly speaking $\RCD^*$ condition is combining infinitesimally Hilbertian condition and $\CD^*$ ($\CD^*$ condition is equivalent to $\CD^e$ condition (Definition \ref{def:CDe}) under infinitesimally Hilbertian \cite{EKS}). However it proves the equivalence between $\CD$ condition and $\CD^*$ condition under the essentially non-branching condition with finite measure by Cavalletti-Milman \cite{CMglobal}, with locally finite measure by Li \cite{Li}. Since $\RCD^*(K,N)$ condition implies the essentially non-branching property \cite{RS}, $\RCD(K,N)$ condition has complete meaning. In the present paper, we mainly focus on \emph{non-collapsing $\RCD$ spaces}. A metric measure space $(X,d,\m)$ is called a non-collapsing $\RCD(K,N)$ space if it satisfies $\RCD(K,N)$ condition and $\m=\calH^N$, here $\calH^N$ is the $N$-dimensional Hausdorff measure. The name ``non-collapsing" comes from the theory of converging Riemannian manifolds with bounded Ricci curvature by Cheeger-Colding \cite{CC1,CC2,CC3}. See section 2 for the precise definitions.   
\smallskip
\par We consider the inequality (\ref{eq:Sh}) as an inequality of probability measures. For a probability measure $\nu=\rho\calL^n\ll \calL^n$ with finite second moment, the \emph{relative entropy} of $\nu$ is defined by 
\begin{align}
 \Ent_{\calL^n}(\nu):=\int_{\{\rho>0\}}\rho\log \rho\,d\calL^n\notag
\end{align}
and the \emph{variation} of $\nu$ is defined as 
\begin{align}
 \Var(\nu):=\inf_{y\in \R^n}\int_{\R^n}\lv x-y\rv^2\,\nu(dx).\notag
\end{align}
Then the inequality (\ref{eq:Sh}) is rewritten by 
\begin{align}
 -\Ent_{\calL^n}(f\calL^n)\leq \frac{n}{2}\log\left(\frac{2\pi e}{n}\Var(f\calL^n)\right).\notag
\end{align}
 \begin{thm}[Theorem \ref{thm:shannonRCD}]
  Let $(X,d,\calH^N)$ be a non-collapsing $\RCD(0,N)$ space and $\nu\in \calP_2(X)$ a probability measure on $X$ satisfying $\sfb(\nu)\cap \calR_N\neq\emptyset$. Then 
  \begin{align}
   -\Ent_{\calH^N}(\nu)\leq \frac{N}{2}\log\left(\frac{2\pi e}{N}\Var(\nu)\right)\label{eq:measuredSh}
  \end{align}
  holds. 
 \end{thm}
In the statement, $\sfb(\nu)$ is the set of barycenters of $\nu$ and $\calR_N$ is the $N$-dimensional regular set, see section 2 and 3 for the definitions. As long as the author known, the Shannon inequality is new even for Riemannian manifolds with nonnegative Ricci curvature. 
\medskip
\par On $\R^n$, the equality of (\ref{eq:Sh}) is achieved by a function $f$ if and only if $f$ is the Gaussian function. In $\RCD$ spaces, the equality of (\ref{eq:Sh}) forces the shape of $\RCD$ spaces into cone if the equality is achieved by the gradient flow of the relative entropy functional, called the heat flow in the present paper, instead of the Gaussian function. We give the following rigidity result.  
 \begin{thm}[Cone rigidity Theorem \ref{thm:conerigid}]
  Assume the equality in (\ref{eq:measuredSh}) holds for the heat flow, that is, 
  \begin{align}
   -\Ent_{\calH^N}(\mu^{x_0}_t)=\frac{N}{2}\log\left(\frac{2\pi e}{N}W_2^2(\delta_{x_0},\mu_t^{x_0})\right)\notag
  \end{align}
  holds, then $(X,d,\m)$ is a cone. 
 \end{thm}
See section 4 for the precise definitions and assumption. So far the author does not know whether the $\RCD(0,N)$ space in which the equality (\ref{eq:measuredSh}) is supported by another probability measure different from the heat flow has to be a cone or not (see Remark \ref{rem:attainnotheat}). We also deal with an almost rigidity result in section 4. 
\smallskip
\par As an application of Shannon's inequality, we give the Heisenberg-Pauli-Weyl uncertainty principle inequality on $\RCD$ spaces. 
 \begin{thm}[Theorem \ref{thm:UPRCD}]
  Let $(X,d,\calH^N)$ be a non-collapsing $\RCD(0,N)$ space. Assume $\displaystyle{\AVR_{\calH^N}:=\lim_{r\rightarrow \infty}\frac{\calH^N(B_r(x))}{\omega_Nr^N}>0}$, then for a probability measure $\nu\in\calP_2(X)$ with $\sfb(\nu)\cap\calR_N\neq\emptyset$, 
  \begin{align}
   I_{\calH^N}(\nu)^{1/2}\Var(\nu)^{1/2}\geq N\AVR_{\calH^N}^{1/N}\notag
  \end{align}
  holds. Here $I_{\calH^N}$ denotes the Fisher information (see after (\ref{eq:ElogSobmeas}))
 \end{thm}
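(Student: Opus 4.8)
The plan is to \textbf{derive the uncertainty principle inequality as a direct consequence of Shannon's inequality} (Theorem \ref{thm:shannonRCD}), combined with the Fisher information and the asymptotic volume ratio. The target inequality $I_{\calH^N}(\nu)^{1/2}\Var(\nu)^{1/2}\geq N\,\AVR_{\calH^N}^{1/N}$ relates the Fisher information $I_{\calH^N}(\nu)=\int |\nabla \rho|^2/\rho\,d\calH^N$ (for $\nu=\rho\calH^N$), the variation, and the volume growth. On $\R^n$ this is the classical Heisenberg-Pauli-Weyl inequality, whose sharp form is obtained precisely by chaining the logarithmic Sobolev inequality (equivalently, Shannon's entropy bound) with the de Bruijn-type identity linking entropy and Fisher information.

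First I would recall the \emph{dimensional logarithmic Sobolev inequality} on non-collapsing $\RCD(0,N)$ spaces with positive asymptotic volume ratio. The key analytic input is the sharp $\log$-Sobolev-type bound
\begin{align}
 -\Ent_{\calH^N}(\nu)\leq \frac{N}{2}\log\left(\frac{I_{\calH^N}(\nu)}{2\pi N e\,\AVR_{\calH^N}^{2/N}}\right),\notag
\end{align}
which is the Euclidean-logarithmic-Sobolev inequality adapted to the synthetic setting; the factor $\AVR_{\calH^N}^{2/N}$ enters through the Gaussian normalization on the model cone and is exactly the term controlled by the volume asymptotics. This inequality should follow from the $\RCD(0,N)$ heat-kernel estimates and the Li-Yau / Bishop-Gromov machinery already available for non-collapsing spaces, in the same way the Euclidean version follows from the sharp Sobolev inequality.

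Next I would \textbf{add} the entropy-Fisher inequality above to Shannon's inequality from Theorem \ref{thm:shannonRCD}, namely $-\Ent_{\calH^N}(\nu)\leq \frac{N}{2}\log\!\big(\frac{2\pi e}{N}\Var(\nu)\big)$. Writing the first inequality in the form $\Ent_{\calH^N}(\nu)\geq \frac{N}{2}\log\!\big(2\pi N e\,\AVR_{\calH^N}^{2/N}/I_{\calH^N}(\nu)\big)$ and combining with Shannon gives, after the logarithms cancel against the entropy,
\begin{align}
 \frac{N}{2}\log\left(\frac{2\pi N e\,\AVR_{\calH^N}^{2/N}}{I_{\calH^N}(\nu)}\right)\leq \frac{N}{2}\log\left(\frac{2\pi e}{N}\Var(\nu)\right),\notag
\end{align}
whence $2\pi N e\,\AVR_{\calH^N}^{2/N}/I_{\calH^N}(\nu)\leq (2\pi e/N)\Var(\nu)$ by monotonicity of $\log$. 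Rearranging yields $I_{\calH^N}(\nu)\,\Var(\nu)\geq N^2\,\AVR_{\calH^N}^{2/N}$, and taking square roots gives exactly the claimed bound. The normalization of the constants must be tracked carefully so that $2\pi e$ cancels cleanly, which forces the precise shape of the dimensional $\log$-Sobolev inequality used in the first step.

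The main obstacle I anticipate is \textbf{establishing the sharp entropy-Fisher inequality with the correct $\AVR$-dependent constant} on the synthetic space, since the naive Euclidean derivation uses the explicit Gaussian minimizers and the heat semigroup $P_t$ together with de Bruijn's identity $\frac{d}{dt}\Ent_{\calH^N}(P_t\nu)=-I_{\calH^N}(P_t\nu)$. Justifying this identity and the associated monotonicity along the $\EVI_{0,N}$-gradient flow of the entropy—whose characterization is cited in the abstract—requires the regularizing properties of the heat flow on $\RCD$ spaces and the lower semicontinuity of entropy and Fisher information. One must also verify that the class of ``nice'' measures is preserved under $P_t$ so that both endpoint inequalities apply; this is where the heat-kernel asymptotics and the non-collapsing hypothesis (guaranteeing a genuine $N$-dimensional model at infinity) do the real work.
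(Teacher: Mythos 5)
Your overall route is the same as the paper's: chain a dimensional, Euclidean-type logarithmic Sobolev inequality carrying the constant $\AVR_{\calH^N}^{\pm 2/N}$ against Shannon's inequality (Theorem \ref{thm:shannonRCD}) and let the entropy cancel. But as written, your key analytic input has a sign error that breaks the logic. With the paper's convention $\Ent_{\calH^N}(\nu)=\int\rho\log\rho\,d\calH^N$, the sharp log-Sobolev inequality (the cited result of Balogh--Krist\'aly--Tripaldi, Theorem \ref{thm:ElogSob}, rewritten as (\ref{eq:ElogSobmeas})) is an \emph{upper} bound on the entropy,
\begin{align}
 \Ent_{\calH^N}(\nu)\leq \frac{N}{2}\log\left(\frac{1}{2\pi e N}\AVR_{\calH^N}^{-2/N}I_{\calH^N}(\nu)\right),\notag
\end{align}
whereas you stated it with $-\Ent_{\calH^N}(\nu)$ on the left. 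Your version is false already in $\R^N$: for the standard Gaussian (where $\AVR=1$ and $I=N$) your left-hand side equals $\frac{N}{2}\log(2\pi e)$ while your right-hand side equals $-\frac{N}{2}\log(2\pi e)$. The error is not merely cosmetic, because it invalidates the chaining step: your rearranged form $\Ent_{\calH^N}(\nu)\geq\frac{N}{2}\log\bigl(2\pi Ne\,\AVR_{\calH^N}^{2/N}/I_{\calH^N}(\nu)\bigr)$ and Shannon's inequality $\Ent_{\calH^N}(\nu)\geq-\frac{N}{2}\log\bigl(\frac{2\pi e}{N}\Var(\nu)\bigr)$ are both \emph{lower} bounds on the entropy, and two lower bounds cannot be combined into anything; the displayed ``combination'' is a non sequitur. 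With the correct sign the chain closes exactly as in the paper, $-\frac{N}{2}\log\bigl(\frac{2\pi e}{N}\Var(\nu)\bigr)\leq\Ent_{\calH^N}(\nu)\leq\frac{N}{2}\log\bigl(\frac{1}{2\pi eN}\AVR_{\calH^N}^{-2/N}I_{\calH^N}(\nu)\bigr)$, which yields $I_{\calH^N}(\nu)\Var(\nu)\geq N^2\AVR_{\calH^N}^{2/N}$; your final constants are right.

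A second point: what you flag as the main obstacle --- proving the entropy--Fisher inequality via de Bruijn's identity, heat-flow regularization, and preservation of ``nice'' measures under $P_t$ --- is not part of the argument at all. The $\AVR$-dependent log-Sobolev inequality is taken as a black box from \cite{BKT} (it holds on $\CD(0,N)$ spaces with $\AVR_{\m}>0$), and the paper's proof of Theorem \ref{thm:UPRCD} simply applies both endpoint inequalities to the \emph{fixed} measure $\nu$; no heat flow is run on $\nu$. The standing hypotheses $\nu\in\calD(\Ent_{\calH^N})\cap\calD(I_{\calH^N})$ and $\sfb(\nu)\cap\calR_N^*\neq\emptyset$ (the meaning of ``nice'') are exactly what make the two inequalities applicable, the latter being needed for Shannon's inequality since the key constant $C_0(x)=(\pi e)^{-1/2}$ is only established at points of $\calR_N^*$.
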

 \begin{rem}
  Uncertainty principle inequality is extended to many situation, see for instance, \cite{DT,E,HKZ,Kr,KO,MM,OST}. Also see \cite{FS} as historical reference for the uncertainty principle.  
 \end{rem}
%
%
\section{Preliminaries}
Let $(Y,d_Y)$ be a metric space. 
A continuous curve $\gamma:[0,1]\rightarrow Y$ is called a \emph{geodesic} connecting from $y_0$ to $y_1$ if $\gamma_0=y_0$, $\gamma_1=y_1$, and $d_Y(\gamma_s,\gamma_t)=\lv s-t\rv d_Y(\gamma_0,\gamma_1)$ holds for any $s,t\in [0,1]$. 
\subsection{$(K,N)$-convexity}
 \begin{defn}
  For $\kappa\in\R$, $\theta\geq0$, we define  
  \begin{align}
   \fraks_{\kappa}(\theta):=\begin{cases}
   \frac{1}{\sqrt{\kappa}}\sin(\sqrt{\kappa}\theta)&\text{if } \kappa>0,\\
   \theta&\text{if }\kappa=0,\\
   \frac{1}{\sqrt{-\kappa}}\sinh(\sqrt{-\kappa}\theta)&\text{if }\kappa<0.
   \end{cases}\notag
  \end{align}
 And for $t\in [0,1]$, we also define  
  \begin{align}
   \sigma^{(t)}_{\kappa}(\theta):=\begin{cases}
   \frac{\fraks_{\kappa}(t\theta)}{\fraks_{\kappa}(\theta)}&\text{if }\kappa\theta^2\neq 0\text{ and }\kappa\theta^2<\pi^2,\\
   t&\text{if }\kappa\theta^2=0,\\
   +\infty&\text{if }\kappa\theta^2\geq\pi^2. 
   \end{cases}\notag
  \end{align}
 \end{defn}
Let $(X,d)$ be a complete separable metric space and $S:X\rightarrow (-\infty,\infty]$ a functional on $X$. We denote  
\begin{align}
 \calD(S):=\left\{x\in X\;;\; S(x)<\infty\right\}.\notag
\end{align}
For $N\in(1,\infty)$, we define  
\begin{align}
 U_N(x):=\exp\left(-\frac{1}{N}S(x)\right).\notag
\end{align} 
 \begin{defn}[\cite{EKS}*{Definition 2.7}]
  For $K\in\R$, $N\in(1,\infty)$, $S$ is said to be \emph{$(K,N)$-convex} if the following holds; For any $x_0,x_1\in\calD(S)$, there exists a geodesic $\gamma:[0,1]\rightarrow X$ connecting them such that 
  \begin{align}
   U_N(\gamma_t)\geq \sigma^{(1-t)}_{K/N}(d(x_0,x_1))\cdot U_N(x_0)+\sigma^{(t)}_{K/N}(d(x_0,x_1))\cdot U_N(x_1)\notag
  \end{align}
  is satisfied for any $t\in[0,1]$.  
 \end{defn}
 We need the following concept of the gradient flow of a function, depending on $K$ and $N$. 
 \begin{defn}[\cite{EKS}*{Definition 2.14}]
  Fix $K\in\R$, $N\in(1,\infty)$. Let $x:(0,\infty)\rightarrow \calD(S)$ be a locally absolutely continuous curve. $(x_t)_t$ is called an \emph{$\EVI_{K,N}$ gradient flow} of $S$ with the initial data $x_0$ if $\lim_{t\rightarrow+0}x_t=x_0$ and for any $z\in\calD(S)$, 
  \begin{align}
   \frac{d}{dt}\fraks_{K/N}\left(\frac{1}{2}d(x_t,z)\right)^2+K\cdot \fraks_{K/N}\left(\frac{1}{2}d(x_t,z)\right)^2\leq \frac{N}{2}\left(1-\frac{U_N(z)}{U_N(x_t)}\right)\label{eq:evikn}
  \end{align}
  holds a.e. $t>0$. 
 \end{defn}
For $K=0$, (\ref{eq:evikn}) is written by 
\begin{align}
 \frac{1}{4}\frac{d}{dt}d(x_t,z)^2\leq \frac{N}{2}\left(1-\frac{U_N(z)}{U_N(x_t)}\right).\notag
\end{align}
For $x_0\in\ol{\calD(S)}$, there exists at most one $\EVI_{K,N}$-gradient flow with the initial data $x_0$ (see Corollary 2.21 in \cite{EKS}).
 \begin{rem}
  It is known that if $S$ is $(K,N)$-convex and differentiable on a complete smooth Riemannian manifold, then a smooth curve $x_t$ satisfies 
  \begin{align}
   \dot{x_t}=-\nabla S(x_t)\notag
  \end{align} 
  if and only if $(x_t)$ is a $\EVI_{K,N}$ gradient flow of $S$. See Lemma 2.4 in \cite{EKS}. 
 \end{rem}
\subsection{$\CD^e$ spaces}
Let $(X,d)$ be a metric space. We denote the set of all Borel probability measures on $X$ by $\calP(X)$. Set 
\begin{align}
 \calP_2(X):=\left\{\mu\in\calP(X)\;;\;\int_{X}d^2(o,x)\,\mu(dx)<\infty\text{ for }{\exists}/{\forall}o\in X\right\}.\notag
\end{align}
For $\mu,\nu\in\calP_2(X)$, $\xi\in \calP(X\times X)$ is called a \emph{coupling} between $\mu$ and $\nu$ if it satisfies 
\begin{align}
 \begin{cases}
  \xi(A\times X)=\mu(A)\\
  \xi(X\times A)=\nu(A)
 \end{cases}\notag
\end{align}
for any Borel subset $A\subset X$. The set of all couplings between $\mu$ and $\nu$ is denoted by $\Cpl(\mu,\nu)$. The $L^2$-Wasserstein distance between $\mu,\nu\in\calP_2(X)$ is defined by 
\begin{align}
 W_2(\mu,\nu):=\inf\left\{\lV d\rV_{L^2(\xi)}\;;\;\xi\in\Cpl(\mu,\nu)\right\}.\notag
\end{align}
$(\calP_2(X),W_2)$ is called the $L^2$-Wasserstein space. It is known that $(\calP_2(X),W_2)$ is a complete separable geodesic metric space if so is $(X,d)$. The explicit value of the Wasserstein distance between two given measures is difficult to calculate. However $W_2(\mu,\delta_z)$, where $\delta_z$ is the Dirac measure at $z\in X$ is easy to calculate, in fact, it is given by 
\begin{align}
 W_2^2(\mu,\delta_z)=\int_Xd^2(x,z)\mu(dx).\notag
\end{align} 
\smallskip
\par We call a triplet $(X,d,\m)$ \emph{metric measure space} if $(X,d)$ is a complete separable metric space and $\m$ is a locally finite Borel measure on $X$. Moreover, in this paper, we always assume that $\supp\m=X$ and there exists a constant $C>0$, $x_0\in X$ such that 
\begin{align}
 \int_Xe^{-Cd^2(x_0,x)}\,\m(dx)<\infty.\label{eq:integrable}
\end{align}
The \emph{relative entropy} functional $\Ent_{\m}:\calP_2(X)\rightarrow \R\cup\{+\infty\}$ is defined by 
\begin{align}
 \Ent_{\m}(\mu):=\begin{cases}
  \int_{\{\rho>0\}}\rho\log\rho\,d\m&\text{if }\mu=\rho\m\ll\m\\
  +\infty&\text{otherwise}.
 \end{cases}\notag
\end{align} 
Note that (\ref{eq:integrable}) guarantees $\Ent_{\m}(\mu)>-\infty$ for any $\mu\in\calP_2(X)$ (see for instance \cite{AGS}). 
 \begin{defn}[$\CD^e(K,N)$ space, \cite{EKS}]\label{def:CDe}
  Let $K\in\R$, $N\in(1,\infty)$. A metric measure space $(X,d,\m)$ is called a $\CD^e(K,N)$ space if $\Ent_{\m}$ is $(K,N)$-convex on $(\calP_2(X),W_2)$. 
 \end{defn}
It is known that $\CD^e(K,N)$ spaces are geodesic spaces. 
\subsection{Infinitesimal Hilbertianity}
Let $(X,d,\m)$ be a metric measure space. For any locally Lipschitz function $f:X\rightarrow \R$, define 
\begin{align}
 \Lip f(x):=\limsup_{y\rightarrow x}\frac{\lv f(x)-f(y)\rv}{d(x,y)}\notag
\end{align}
if $x$ is not isolated, $\Lip f(x)=0$ otherwise. For any $f\in L^2(\m)$, we define the \emph{Cheeger energy} of $f$ as 
\begin{align}
 \Ch(f):=\frac{1}{2}\inf\left\{\liminf_{n\rightarrow \infty}\int_X(\Lip f_n)^2\,d\m\;;\;f_n:\text{loc. Lipschitz, }f_n\xrightarrow{L^2} f\right\}.\label{eq:CE}
\end{align}
For $f\in \calD(\Ch)=\{g\in L^2(\m)\;;\;\Ch(g)<\infty\}$, there exists an $L^2$-function $\lv \nabla f\rv$ such that 
\begin{align}
 \Ch(f)=\frac{1}{2}\int_X\lv \nabla f\rv^2\,d\m\notag
\end{align}
holds. The $L^2$ function $\lv \nabla f\rv$ is also characterized by the least Borel function $G:X\rightarrow [0,\infty]$ satisfying 
\begin{align}
 \int\lv f(\gamma_1)-f(\gamma_0)\rv\,d\pi(\gamma)\leq \int\int_0^1G(\gamma_s)\lv\dot{\gamma}_s\rv\,dsd\pi(\gamma)\notag
\end{align}
for any test plan $\pi\in \calP(C([0,1];X))$ and determined up to $\m$-a.e. see \cite {AGS}. It coincides with various concepts of gradient under mild assumptions (relaxed gradient, Newtonian and Cheeger's ones. See \cite{AGS,G,GPbook}). Note that we do not assert the existence of the gradient $\nabla f$ of $f$, but this is true by \cite{G}. 
 \begin{defn}[\cite{G}]
  A metric measure space $(X,d,\m)$ is said to be \emph{infinitesimally Hilbertian} if for any $f,g\in\calD(\Ch)$, it holds that 
  \begin{align}
   \Ch(f+g)+\Ch(f-g)=2\left(\Ch(f)+\Ch(g)\right).\notag
  \end{align}
 \end{defn}
It is known that $(X,d,\m)$ is infinitesimally Hilbertian if and only if the Sobolev space $W^{1,2}(X,d,\m):=\calD(\Ch)$ equipped with the norm $\lV f\rV_{1,2}^2:=\lV f\rV_2^2+2\Ch(f)$ is a Hilbert space. In fact, if $(X,d,\m)$ is infinitesimally Hilbertian, then for any $f,g\in W^{1,2}$, there exists a canonical $L^1$ function $\la \nabla f,\nabla g\ra$ such that 
\begin{align}
 \frac{1}{2}(\Ch(f+g)-\Ch(f-g))=\int_X\la \nabla f,\nabla g\ra\,d\m\notag
\end{align}
holds. 
\medskip
\par Later, we need the concept of the measure-valued Laplacian. Hence here we give some notions. Assume a metric measure space $(X,d,\m)$ is infinitesimally Hilbertian and proper (any closed bounded set is compact). Let $\Omega$ be an open subset of $X$. We define the local Sobolev space by 
\begin{align}
 W_{loc}^{1,2}(\Omega):=\left\{u\in L^2_{loc}(\Omega)\;;\;u\eta\in W^{1,2}(X),\;\text{for every }\eta\in\mathsf{LIP}_c(\Omega) \right\},\notag
\end{align} 
where $\mathsf{LIP}_c(\Omega)$ is the set of all Lipschitz functions with compact supports included in $\Omega$.   
 \begin{defn}[\cite{CMnew}, cf. \cite{GV,G}]
  Let $\Omega\subset X$ be an open subset. We say that $u\in W^{1,2}_{loc}(\Omega)$ belongs to the domain of the measured-valued Laplacian $\calD({\bf \Delta},\Omega)$ if there exists a Radon measure ${\bf \Delta}\vert_{\Omega}u$ in $\Omega$ such that 
  \begin{align}
   -\int_{\Omega}\la \nabla f,\nabla u\ra\,d\m=\int_{\Omega}f\,d{\bf \Delta}\vert_{\Omega}u\notag
  \end{align}
  holds for every $f\in \mathsf{LIP}_c(\Omega)$. When $\Omega=X$, we write the measure-valued Laplacian ${\bf \Delta}$ and the domain of that by $\calD({\bf \Delta})$. 
 \end{defn}
If ${\bf \Delta}\vert_{\Omega}u=h\m\ll\m$ for $h\in L^2_{loc}(\Omega)$, then we denote $\Delta\vert_{\Omega}u=h$ and $u\in\calD(\Delta)$. Moreover when $u,h\in L^2$, $\Delta$ is a linear operator by the infinitesimal Hilbertianity, that also coincides with the generator of the Dirichlet form associated with $2\Ch$.  
\subsection{$\RCD$ space}
In this subsection we always assume $K\in\R$, $N\in(1,\infty)$. The following definition is equivalent to that of $\RCD^*(K,N)$ in \cite{EKS}. 
  \begin{defn}
   We call a metric measure space $(X,d,\m)$ \emph{$\RCD(K,N)$ space} if it is infinitesimally Hilbertian and satisfies the $\CD^e(K,N)$ condition. 
  \end{defn}
  \begin{rem}
   Under the $\RCD(K,N)$ condition, (\ref{eq:integrable}) is satisfied (\cite{EKS}). But for simplicity, we add (\ref{eq:integrable}) to the definition of metric measure space. 
  \end{rem}
 We give the following important examples. 
  \begin{example}\label{ex:RCD}
  \begin{enumerate}
   \item Let $(M^n,g)$ be a complete Riemannian manifold, $f:M\rightarrow \R$ a $C^2(M)$ function, $d_g$ the Riemannian distance function, and $\vol_g$ the Riemannian volume measure on $M$. Set $\m:=e^{-f}\vol_g$. Then the metric measure space $(M,d_g,\m)$ satisfies $\RCD(K,N)$ condition for $N>n$ if and only if 
   \begin{align}
    \mathrm{Ric}_N:=\mathrm{Ric}_g+\Hess_f-\frac{df\otimes df}{N-n}\geq Kg\notag
   \end{align}
   holds. For $N=n$, the $\RCD(K,n)$ condition is equivalent to $df=0$ and $\mathrm{Ric}_g\geq K$. 
   \item Let $\{(X_i,d_i,\m_i)\}_i$ be a family of $\RCD(K_i,N)$ spaces. For $x_i\in X_i$, assume $\m_i(B_1(x_i))=1$, $K_i\rightarrow K$ and $(X_i,d_i,\m_i,x_i)\xrightarrow{pmG}(X_{\infty},d_{\infty},\m_{\infty},x_{\infty})$ as $i\rightarrow \infty$, where $\xrightarrow{pmG}$ means the pointed measured Gromov convergence (see \cite{GMS}). Then $(X_{\infty},d_{\infty},\m_{\infty})$ satisfies the $\RCD(K,N)$ condition. Moreover a family of $\RCD(K,N)$ spaces with the normalized measures is precompact with respect to the pmG-convergence. 
   \end{enumerate}
  \end{example}
 We list geometric and analytic properties of $\RCD$ spaces in the following; Let $(X,d,\m)$ be an $\RCD(K,N)$ space. 
 \begin{enumerate}
  \item[(a)] (Bishop-Gromov inequality \cite{St2,EKS}): 
  \begin{align}
   \frac{\m(B_R(x))}{\m(B_r(x))}\leq \frac{\int_0^R\fraks_{K/N}(t)^N\,dt}{\int_0^r\fraks_{K/N}(t)^N\,dt}\notag
  \end{align}
  holds for any $0<r\leq R$, $x\in X$. Especially $(X,d)$ is proper. 
  \item[(b)] (Laplacian comparison \cite{G}): Here we take $K=0$. For fixed $x_0\in X$, $\frac{d_{x_0}^2}{2}\in\calD({\bf\Delta})$ and 
  \begin{align}
   {\bf \Delta}\frac{d_{x_0}^2}{2}\leq N\m\label{eq:Lcomp}
  \end{align}
  holds, where $d_{x_0}(\cdot):=d(x_0,\cdot)$. Moreover ${\bf \Delta}d_{x_0}^2$ is decomposed into the sum of two Radon measures, one is absolutely continuous to $\m$ and another is singular to $\m$, see more precise representation in \cite{CMnew}. For our purpose, we only need to know that ${\bf \Delta}d_{x_0}^2=\text{regular part}+\text{singular part}$. 
  \item[(c)] (Essential dimension \cite{BS}): For an $\RCD(K,N)$ space $(X,d,\m)$ and $x\in X$, 
  \begin{align}
   (X,d_r,\m_{x,r}):=(X,r^{-1}d,\m(B_r(x))^{-1}\m)\notag
  \end{align}
  is an $\RCD(r^2K,N)$ space. Hence there exists a sequence $(r_i)_i$ with $r_i\downarrow 0$ such that $(X,d_{r_i},\m_{x,r_i})$ converges to an $\RCD(0,N)$ space by (2) in Example \ref{ex:RCD}. Define the set of \emph{tangent cones} at $x$ by 
  \begin{align}
   &\Tan(X,x)\notag\\
   &:=\left\{(Y,d_Y,\m_Y,y)\;;\;~^{\exists}r_i\downarrow 0,\text{ s.t. }(X,d_{r_i},\m_{x,r_i},x)\xrightarrow{pmG} (Y,d_Y,\m_Y,y)\right\}.\notag
  \end{align} 
  Note that $\Tan(X,x)$ is the set of isomorphism classes of pointed metric measure spaces, where two pointed metric measure spaces $(X,d_X,\m_X,x)$ and $(Y,d_Y,\m_Y,y)$ is isomorphic to each other if there exists an isometry $f:(X,d_X)\rightarrow (Y,d_Y)$ with $f(x)=y$ such that $f_{\#}\m_X=\m_Y$ ($f_{\#}\m_X$ denotes the push-forward measure on $Y$). For an integer $k\in\bbN$, the $k$-dimensional regular set is defined as 
  \begin{align}
   \calR_k:=\left\{x\in X\;;\; \Tan(X,x)=\{(\R^k,d_E,\ul{\calL}^k,o^k)\}\right\},\notag
  \end{align}
  here $o^k$ is the origin in $\R^k$ and $\ul{\calL}^k:=\calL^k(B_1(o^k))^{-1}\calL^k$ is the normalized $k$-dimensional Lebesgue measure on $\R^k$. It is known that there exists an integer $1\leq k\leq N$ such that 
  \begin{align}
   \m\left(X\setminus \calR_k\right)=0.\notag
  \end{align}
  The above integer $k$ is denoted by $\dim_{ess}(X,d,\m)$ and called the \emph{essential dimension}. It is known that $\m\vert_{\calR_k}\ll \calH^k$ (see \cite{AHT,DMR,GP,KM}).
  \item[(d)] (Heat kernel \cite{AGSmm}): The $L^2$-Laplacian $\Delta$ is a self-adjoint linear operator on $L^2(\m)$. The associated semigroup $T_t$ is described by the \emph{heat kernel} $p$, that is, 
  \begin{align}
   T_tf(x)=\int_Xf(y)p(x,y,t)\m(dy)\notag
  \end{align}
  holds for $f\in L^2(\m)$. The heat semigroup can be seen as the gradient flow of the Cheeger energy, which is called the \emph{heat flow}. The heat kernel satisfies the heat equation; for $t>0$, it holds 
  \begin{align}
   \del_t p(x,y,t)=\Delta_y p(x,y,t)\notag
  \end{align}
  in weak sense. The heat kernel $\tilde{p}$ on the rescaled metric measure space $(X,d_r,C\m)$ which is an $\RCD(r^2K,N)$ space is 
  \begin{align}
   \tilde{p}(x,y,t)=C^{-1}p(x,y,r^{2}t).\label{eq:heatrescaled}
  \end{align}
  For $\RCD(0,N)$ spaces, the \emph{heat kernel estimate} is known; For any $\varepsilon>0$, there exists a constant $C=C(\varepsilon)>0$ such that 
  \begin{align}
   \frac{C(\varepsilon)^{-1}}{\m\left(B_{\sqrt{t}}(y)\right)}\exp\left(-\frac{d^2(x,y)}{(4-\varepsilon)t}\right)\leq p(x,y,t)\leq \frac{C(\varepsilon)}{\m\left(B_{\sqrt{t}}(y)\right)}\exp\left(-\frac{d^2(x,y)}{(4+\varepsilon)t}\right)\label{eq:hkestimate}
  \end{align}
  for all $t>0$, $x,y\in X$ (\cite{JLZ}). 
  \item[(e)] (Coincidence with two flows \cite{AGS,EKS}): Let $\nu=f\m\in\calP_2(X)$ be a Borel probability measure which is absolutely continuous to $\m$ with the density function $f\in L^2(\m)$. Then $\nu_t:=(T_tf)\m$ is the $\EVI_{K,N}$-gradient flow of $\Ent_{\m}(\cdot)$ starting from $\nu$. Moreover, though $\delta_x$ ($x\in X$) is not absolutely continuous to $\m$, $t\mapsto p(x,y,t)\m(dy)$ is also the $\EVI_{K,N}$-gradient flow of $\Ent_{\m}$ starting from $\delta_x$. Note that $t\mapsto U_N(\nu_t)$ is non-decreasing. 
  \item[(f)] (Non-collapsing space \cite{DG,Hnew,BGHZ}): An $\RCD(K,N)$ space $(X,d,\m)$ is called a \emph{non-collapsing $\RCD(K,N)$ space} if $N\in\bbN$ and $\dim_{ess}(X,d,\m)=N$. One of the most remarkable properties on non-collapsing $\RCD$ space is the measure $\m$ coincides with the $N$-dimensional Hausdorff measure up to multiplying a positive constant. On every non-collapsing $\RCD$ space, $x\in\calR_N$ if and only if  
  \begin{align}
   \lim_{t\downarrow0}\frac{\calH^N(B_t(x))}{\omega_Nt^N}=1,\notag
  \end{align}
where $\omega_N:=\pi^{N/2}/\Gamma(1+N/2)$ is the volume of $N$-dimensional Euclidean unit balls \cite{BGHZ}.  
  \item[(g)] (Regularity of $U_N(\mu_t)$ \cite{EKS}): Let $\mu_t$ be the $\EVI_{K,N}$-gradient flow of $\Ent_{\m}$ with the initial data $\mu_0$. Then the map $t\mapsto U_N(\mu_t)$ is concave in $(0,+\infty)$. Accordingly it is locally absolutely continuous. 
 \end{enumerate}
 We give the following proposition that we need later. 
 \begin{prop}
  Let $(X,d,\calH^N)$ be a non-collapsing $\RCD(0,N)$ space and $x\in\calR_N$. We denote the heat kernel on $X$ by $p$. Define the constant 
  \begin{align}
   \alpha_t:=\frac{\omega_N}{\calH^N(B_{t^{1/2}}(x))}.\label{eq:alpha}
  \end{align}
  Then we have 
  \begin{align}
   \int_X\frac{1}{\alpha_t}p(x,y,t)\log\frac{1}{\alpha_t}p(x,y,t)\,(\alpha_t\calH^N)(dy)\rightarrow -\frac{N}{2}\log(4\pi e)\label{eq:limitEnt}
  \end{align}
  as $t\downarrow 0$. 
 \end{prop}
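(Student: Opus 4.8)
The plan is to reduce the statement to a blow-up computation at the regular point $x$ and then read off the limit from the Euclidean heat kernel. Set $r=\sqrt{t}$ and consider the rescaled space $(X,d_r,\m_{x,r})$ with $\m_{x,r}=\calH^N(B_{\sqrt t}(x))^{-1}\calH^N$, which is again an $\RCD(0,N)$ space by item (c). Writing $p_r$ for its heat kernel, the parabolic scaling relation (\ref{eq:heatrescaled}) with $C=\calH^N(B_{\sqrt t}(x))^{-1}$ gives
\begin{align}
 p_r(x,\cdot,1)=\calH^N(B_{\sqrt t}(x))\,p(x,\cdot,t).\notag
\end{align}
Since $\alpha_t=\omega_N\,\calH^N(B_{\sqrt t}(x))^{-1}$, this yields the two exact identities $\tfrac{1}{\alpha_t}p(x,\cdot,t)=\omega_N^{-1}p_r(x,\cdot,1)$ and $\alpha_t\calH^N=\omega_N\m_{x,r}$. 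Substituting them into the left-hand side of (\ref{eq:limitEnt}) and using that $p_r(x,\cdot,1)$ is a probability density against $\m_{x,r}$, the cross term contributes $-\log\omega_N$ and we obtain the exact identity
\begin{align}
 \int_X\tfrac{1}{\alpha_t}p(x,y,t)\log\!\big(\tfrac{1}{\alpha_t}p(x,y,t)\big)\,(\alpha_t\calH^N)(dy)=\Ent_{\m_{x,r}}\!\big(p_r(x,\cdot,1)\,\m_{x,r}\big)-\log\omega_N.\notag
\end{align}
Thus it suffices to prove that $\Ent_{\m_{x,r}}(p_r(x,\cdot,1)\m_{x,r})\to\log\omega_N-\tfrac{N}{2}\log(4\pi e)$ as $r\downarrow0$.

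Next I would identify the limit geometrically. Because $x\in\calR_N^*\subset\calR_N$, item (c) gives that the pointed rescaled spaces converge, $(X,d_r,\m_{x,r},x)\xrightarrow{pmG}(\R^N,d_E,\ul{\calL}^N,o^N)$ as $r\downarrow0$. Invoking the stability of the heat flow along $pmG$-convergence of $\RCD(0,N)$ spaces, the kernels $p_r(x,\cdot,1)$ converge to the time-$1$ heat kernel on $\R^N$ expressed against $\ul{\calL}^N$, namely $y\mapsto\omega_N(4\pi)^{-N/2}e^{-|y|^2/4}$, and correspondingly the probability measures $p_r(x,\cdot,1)\m_{x,r}$ converge weakly to the measure $\mu_\infty:=\omega_N(4\pi)^{-N/2}e^{-|y|^2/4}\,\ul{\calL}^N(dy)$. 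A direct Gaussian computation then gives $\Ent_{\ul{\calL}^N}(\mu_\infty)=\log\omega_N-\tfrac{N}{2}\log(4\pi e)$, which is exactly the claimed constant after subtracting $\log\omega_N$.

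The remaining and genuinely delicate point is to justify passing the nonlinear functional $\rho\mapsto\rho\log\rho$ to the limit along this convergence: weak convergence of the measures alone is not enough. I would instead use the scale-invariant two-sided Gaussian estimates available on $\RCD(0,N)$ spaces, which are uniform in $r$ since the doubling and Poincar\'e constants controlling them depend only on $N$ and are preserved under the rescaling. The upper Gaussian bound, together with the Bishop--Gromov volume control of item (a), provides both the equi-integrability of $p_r\log p_r$ near the diagonal and the tightness needed to control the Gaussian tails on the non-compact limit $\R^N$, while the lower bound prevents loss of mass from the region $\{p_r<1\}$. With these uniform controls the entropies converge to $\Ent_{\ul{\calL}^N}(\mu_\infty)$, completing the proof. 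I expect this equi-integrability/tightness step---reconciling the nonlinearity of the entropy with the non-compactness of the tangent space---to be the main obstacle; the algebraic reduction and the evaluation of the Euclidean integral are routine by comparison.
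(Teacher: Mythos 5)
Your proof takes essentially the paper's route, and your algebraic reduction is exactly right: rescale by $r=\sqrt t$, use (\ref{eq:heatrescaled}) to recognize the left-hand side of (\ref{eq:limitEnt}) as the entropy of the time-$1$ heat flow started from $\delta_x$ in the blown-up space, pass to the Euclidean tangent space at the regular point, and evaluate the Gaussian integral (the same bookkeeping reappears in the paper's proof of Lemma \ref{lem:key2}). The divergence is in the step you call the main obstacle. The paper needs no equi-integrability argument at all: it quotes \cite{GMS}*{Theorem 7.7}, which states precisely the missing convergence --- along pmG convergence of the spaces, if the initial data converge in $W_2$ (here $\delta_x\rightarrow\delta_{0^N}$), then the heat flows converge in $W_2$ \emph{and their relative entropies converge} for every fixed $t>0$ --- so the proposition follows in one line. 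In other words, what you flag as genuinely delicate is a known, quotable stability theorem, not something that must be reconstructed. Your substitute plan (scale-invariant Gaussian bounds, uniform in $r$ on $\RCD(0,N)$ spaces, plus Bishop--Gromov tail control and local convergence of the kernels) is viable and would yield a more self-contained argument, with two caveats: the convergence of the densities $p_r(x,\cdot,1)$ is not delivered by $W_2$-stability of the flows (that gives weak convergence of measures, not of densities) and should be sourced from \cite{AHT}*{Theorem 3.3}, which the paper cites elsewhere for exactly this purpose; and the lower Gaussian bound is unnecessary, since on $\{p_r<1\}$ the elementary inequality $\lv u\log u\rv\leq\tfrac{2}{e}\sqrt{u}$ together with the Gaussian upper bound and Bishop--Gromov already gives uniform integrability. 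As written, though, your decisive sentence (``with these uniform controls the entropies converge'') is asserted rather than proved, and it is exactly the content of the theorem the paper invokes.
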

 \begin{proof}
    Since $x\in\calR_N$, we have $\delta_x\rightarrow \delta_{0^N}$ ($0^N$ is the origin in $\R^N$) in $W_2$ (here, we use the $\lq\lq$extrinsic notion" of $pmG$ convergence. see \cite{GMS} for the precise meaning). $\alpha_t^{-1}p(x,y,t)$ coincides with $\tilde{p}(x,y,1)$ which is the heat kernel of the metric measure space $(X,t^{-1/2}d,\alpha_t\calH^N)$ by (\ref{eq:heatrescaled}). By Theorem 7.7 in \cite{GMS}, the limit of the left-hand side in (\ref{eq:limitEnt}) coincides with $\Ent_{\calL^N}(p^{\R^N}(0^N,\cdot,1)\calL^N)$. Hence by the simple calculation, we obtain the limit is $\displaystyle{-\frac{N}{2}\log(4\pi e)}$. 
   \end{proof}
%
%
\section{Shannon's inequality}
\subsection{Proof of Shannon's inequality on $\R^n$}
In this subsection, we give a proof of Shannon's inequality on $\R^n$ seen as an $\RCD(0,n)$ space. 
\begin{proof}
 We denote the heat kernel in $\R^n$ by $p^{\R^n}$. Let $f:\R^n\rightarrow \R$ be a nonnegative function with $\lV f\rV_1=1$ and with $\lv x\rv^2f(x)\in L^1(\R^n)$. Define $\nu:=f\calL^n$, accordingly $\nu\in\calP_2(\R^n)$. We take a point $\omega\in\R^n$ such that 
 \begin{align}
  \omega\in\argmin_{y\in\R^n}\int_{\R^n}\lv x-y\rv^2f(x)\,dx.\label{eq:baryRn}
 \end{align}
 The minimizers in the right-hand side in (\ref{eq:baryRn}) always exist and it is unique. 
 Define 
 \begin{align}
  \mu_t(dx):=p^{\R^n}(\omega,x,t)\calL^n(dx),\notag
 \end{align}
 which is the $\EVI_{0,n}$ gradient flow of $\Ent_{\calL^n}$ with the initial data $\delta_{\omega}$. Set 
 \begin{align}
  \theta^2:=\int_{\R^n}\lv x-\omega\rv^2\,\nu(dx)=W_2^2(\nu,\delta_{\omega})=W_2^2(\nu,\mu_0).\notag
 \end{align}
 Integrate (\ref{eq:evikn}) from 0 to $t$, then 
 \begin{align}
  &\frac{1}{4}W_2^2(\mu_t,\nu)-\frac{1}{4}W_2^2(\mu_0,\nu)\leq \int_0^t\frac{n}{2}\left(1-\frac{U_n(\nu)}{U_n(\mu_s)}\,ds\right)\notag\\
  &=\frac{nt}{2}-\frac{nU_n(\nu)}{2}\int_0^t\frac{1}{U_n(\mu_s)}ds=:\frac{nt}{2}-\frac{nU_n(\nu)}{2}F(t).\notag 
 \end{align}
 Then we have 
 \begin{align}
  U_n(\nu)&\leq \frac{1}{F(t)}\left(t+\frac{1}{2n}\left(\theta^2-W_2^2(\mu_t,\nu)\right)\right)\label{eq:thetaW2}\\
  &\leq \frac{1}{F(t)}\left(t+\frac{1}{2n}\theta^2\right)\label{eq:UnRn}
 \end{align}
 holds for a.e. $t>0$. Since both side in (\ref{eq:UnRn}) are continuous in $t$, (\ref{eq:UnRn}) actually holds for any $t>0$. By a simple calculation, we have 
 \begin{align}
  U_n(\mu_t)=\sqrt{4\pi e t},\quad F(t)=\sqrt{\frac{t}{\pi e}}.\notag
 \end{align}
 Substitute $t=\theta^2/2n$ to (\ref{eq:UnRn}) leads 
 \begin{align}
  U_n(\nu)\leq \sqrt{\frac{2n\pi e}{\theta^2}}\left(\frac{\theta^2}{2n}+\frac{\theta^2}{2n}\right)=\sqrt{\frac{2\pi e}{n}\theta^2}.\notag
 \end{align}
 It is what we need. 
\end{proof}
\subsection{Proof of Shannon's inequality on non-collapsing $\RCD(0,N)$ spaces}
In this subsection, we always assume $(X,d,\m)$ is an $\RCD(0,N)$ space which is not one point. 
 \begin{lem}\label{lem:theta}
  Take a point $o\in X$. Define $\mu_t(dy):=p(o,y,t)\m(dy)$ with $\mu_0=\delta_o$, and $\theta_t^2:=W_2^2(\delta_o,\mu_t)$. Then 
  \begin{align}
   \theta^2_t\leq 2Nt.\notag
  \end{align}
 \end{lem}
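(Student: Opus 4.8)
The plan is to express $\theta_t^2$ as a weighted integral of the heat kernel against the squared distance function and then control its growth in $t$ through the Laplacian comparison. Using the formula $W_2^2(\mu,\delta_z)=\int_X d^2(x,z)\,\mu(dx)$ recorded above, together with the symmetry $p(o,y,t)=p(y,o,t)$ of the heat kernel, I write
\begin{align}
 \theta_t^2=\int_X d_o^2(y)\,p(o,y,t)\,\m(dy)=2\,T_t u(o),\notag
\end{align}
where $u:=\tfrac{1}{2}d_o^2$ with $d_o(\cdot):=d(o,\cdot)$ and $T_t$ denotes the heat semigroup. Since $u(o)=0$, the asserted bound is equivalent to $T_tu(o)-u(o)\leq Nt$, and I aim to prove the slightly more flexible statement that $t\mapsto T_tu(o)$ grows at rate at most $N$.

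The computation itself is short. By property (b) (Laplacian comparison with $K=0$) we have $u\in\calD(\mlap)$ and $\mlap u\leq N\m$. Differentiating in $t$, using the heat equation $\del_t p=\Delta_y p$, and then moving the Laplacian onto $u$ via the integration by parts that characterizes $\mlap$ gives
\begin{align}
 \frac{d}{dt}T_tu(o)=\int_X u\,\Delta_y p(o,\cdot,t)\,d\m=\int_X p(o,\cdot,t)\,d(\mlap u)\leq N\int_X p(o,\cdot,t)\,d\m=N,\notag
\end{align}
the final equality being conservation of mass of the heat flow. Integrating from $0$ to $t$ and using $T_0u(o)=u(o)=0$ yields $T_tu(o)\leq Nt$, that is $\theta_t^2\leq 2Nt$, as claimed.

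The delicate point, and the one where the real work lies, is that $u=\tfrac12 d_o^2$ is unbounded and not compactly supported, while $\mlap u$ is a priori tested only against $\mathsf{LIP}_c(X)$ and $p(o,\cdot,t)$ is likewise not compactly supported; hence both integrations by parts above are only formal as written. To make them rigorous I would insert a family of cutoffs $\chi_R$ equal to $1$ on $B_R(o)$ and supported in $B_{2R}(o)$, apply the integration-by-parts formula to $\chi_R\,p(o,\cdot,t)$ (admissible after the usual truncation of $u$), and let $R\to\infty$. The vanishing of the resulting error terms is controlled by the Gaussian upper bounds for the heat kernel and its gradient on $\RCD(0,N)$ spaces together with the Bishop--Gromov volume growth from property (a), which guarantee that $d_o^2\,p(o,\cdot,t)$ and $\lv\nabla u\rv\,\lv\nabla p\rv$ are integrable and have negligible tails. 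The same decay legitimizes differentiation under the integral sign; alternatively, one may argue purely in the integrated form
\begin{align}
 \int_X u\,\bigl(p(o,\cdot,t)-p(o,\cdot,s)\bigr)\,d\m=\int_s^t\int_X p(o,\cdot,\tau)\,d(\mlap u)\,d\tau,\notag
\end{align}
which avoids any appeal to pointwise differentiability of $t\mapsto T_tu(o)$ and lets $s\downarrow 0$ directly.
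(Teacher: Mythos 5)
Your proof is correct and follows essentially the same route as the paper: express $\theta_t^2$ as the integral of $d_o^2$ against the heat kernel, differentiate in $t$, use the heat equation and integration by parts to move the Laplacian onto $\tfrac12 d_o^2$, apply the comparison ${\bf \Delta}\tfrac{d_o^2}{2}\leq N\m$, and integrate from $0$ to $t$. The paper likewise relegates the justification of the integration by parts (neither $p(o,\cdot,t)$ nor $d_o^2$ being compactly supported) to a cut-off argument, citing the standard constructions of cut-off functions, so your more detailed discussion of that point matches the paper's intent.
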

 \begin{proof}
  Note that $\theta_0^2=0$ and 
  \begin{align}
   \theta_t^2=\int_Xd^2(o,y)p(o,y,t)\m(dy).\notag
  \end{align}
  Then 
  \begin{align}
   \theta_t^2=\theta_t^2-\theta_0^2=\int_0^t\frac{d}{ds}\theta_s^2\,ds=2\int_0^t\frac{1}{2}\frac{d}{ds}W_2^2(\delta_o,\mu_s)\,ds.\notag
  \end{align}
  We have 
  \begin{align}
   \frac{1}{2}\frac{d}{dt}W_2^2(\delta_o,\mu_t)&=\int_X\frac{d_o^2(y)}{2}\frac{\del}{\del t}p(o,y,t)\m(dy)\notag\\
   &=\int_X\frac{d_o^2(y)}{2}\Delta_yp(o,y,t)\m(dy)\notag\\
   &=\int_Xp(o,y,t)d{\bf \Delta}\frac{d_o^2}{2}(y)\label{eq:Lapheat}\\
   &\leq N\int_Xp(o,y,t)\m(dy)\notag\\
   &=N\notag
  \end{align}
  by (\ref{eq:Lcomp}). Hence we obtain 
  \begin{align}
   \theta_t^2\leq 2N\int_0^t\,ds=2Nt.\notag
  \end{align}
 \end{proof} 
 \begin{rem}
  The heat kernel is a Lipschitz function but not compact supported in non-compact setting. However by the cut-off argument, we are able to justify (\ref{eq:Lapheat}) even for such situation. See \cite{MN,AMSbe,GMos} for the construction of smooth cut-off functions.  
 \end{rem}
 \begin{lem}\label{lem:key1}
  For any $x\in X$, we define $\mu_t^x(dy):=p(x,y,t)\m(dy)$ and $F^x(t):=\int_0^tU_N(\mu_s^x)^{-1}\,ds$. Then the map $t\mapsto F^x(t)/\sqrt{t}$ is monotone increasing and the limit $\lim_{t\rightarrow +0}F^x(t)/\sqrt{t}=:C_0=C_0(x)\geq 0$ exists. The constant $C_0\geq 0$ is the largest constant $C$ satisfying  
  \begin{align}
   F^x(t)\geq C\sqrt{t}\label{eq:lC}
  \end{align}
  for any $t>0$.  
 \end{lem}
 \begin{proof}
  Along the proof of the case in $\R^n$, we obtain 
  \begin{align}
   U_N(\mu_t^x)\leq \frac{1}{F^x(t)}\left(t+\frac{\theta_t^2}{2N}\right),\notag
  \end{align}
  where $\theta_t^2:=W_2^2(\mu_t^x,\delta_x)$. By the fact (g), we know $F^x$ is locally absolutely continuous in $(0,\infty)$. Then $F^x(t)$ is differentiable a.e. $t>0$. Since $(F^x)'(t)=U_N(\mu_t^x)^{-1}$, $\theta_t^2\leq 2Nt$, we have $F^x(t)\leq 2t (F^x)'(t)$. Therefore it holds 
\begin{align}
 \left(\frac{F^x(t)}{\sqrt{t}}\right)'=\frac{(F^x)'(t)\sqrt{t}-F^x(t)\frac{1}{2\sqrt{t}}}{t}=\frac{(F^x)'(t)2t-F^x(t)}{2t^{3/2}}\geq 0. \notag
\end{align}   
Since $F^x$ is locally absolutely continuous, we have the monotonicity of the function $F^x(t)/\sqrt{t}$ a.e. $t>0$. Again by the locally absolutely continuity of both side, we have the conclusion. 
 \end{proof}
 \begin{lem}\label{lem:key2}
  Let $(X,d,\calH^N)$ be a non-collapsing $\RCD(0,N)$ space and $x\in\calR_N$. Then the constant $C_0(x)$ appearing in Lemma \ref{lem:key1} is equal to $(\pi e)^{-1/2}$. 
 \end{lem}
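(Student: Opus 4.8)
The plan is to identify $C_0(x)$ with a short-time limit and then read that limit off from the entropy asymptotics supplied by \eqref{eq:limitEnt}. First, recall from the proof of Lemma~\ref{lem:key1} that $t\mapsto F^x(t)/\sqrt t$ is locally absolutely continuous on $(0,\infty)$ with nonnegative derivative a.e., hence non-decreasing. Consequently the largest constant $C$ with $F^x(t)\ge C\sqrt t$ for all $t>0$ is exactly the infimum of $F^x(t)/\sqrt t$, which by monotonicity is attained in the limit:
\begin{align}
C_0(x)=\inf_{t>0}\frac{F^x(t)}{\sqrt t}=\lim_{t\downarrow 0}\frac{F^x(t)}{\sqrt t}.\notag
\end{align}
Thus everything reduces to the short-time behaviour of $F^x$, and hence of $U_N(\mu_t^x)$.

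Next I would extract the short-time asymptotics of $U_N(\mu_t^x)$ from the Proposition at the end of Section~2. The key is the elementary scaling identity for relative entropy under a constant rescaling of the reference measure: for the constant $\alpha_t$ in \eqref{eq:alpha} one has $\Ent_{\alpha_t\calH^N}(\mu_t^x)=\Ent_{\calH^N}(\mu_t^x)-\log\alpha_t$, because $\int_X p(x,\cdot,t)\,d\calH^N=1$. The left-hand side is precisely the quantity in \eqref{eq:limitEnt}, so $\Ent_{\calH^N}(\mu_t^x)-\log\alpha_t\to -\tfrac N2\log(4\pi e)$ as $t\downarrow 0$. On the other hand, since $x\in\calR_N^*$ we have $\calH^N(B_{t^{1/2}}(x))=\omega_N t^{N/2}\bigl(1+o(1)\bigr)$, whence $\log\alpha_t=-\tfrac N2\log t+o(1)$. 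Combining the two gives $\Ent_{\calH^N}(\mu_t^x)=-\tfrac N2\log(4\pi e\,t)+o(1)$, and therefore
\begin{align}
U_N(\mu_t^x)=\exp\!\left(-\tfrac1N\Ent_{\calH^N}(\mu_t^x)\right)=\sqrt{4\pi e\,t}\,\bigl(1+o(1)\bigr)\qquad(t\downarrow 0).\notag
\end{align}
This recovers exactly the Euclidean value $U_n(\mu_t)=\sqrt{4\pi e\,t}$ computed in the $\R^n$ proof, as one expects at a regular point.

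Finally I would integrate this asymptotic. Given $\varepsilon>0$ there is $\delta>0$ such that $(1-\varepsilon)(4\pi e\,s)^{-1/2}\le U_N(\mu_s^x)^{-1}\le(1+\varepsilon)(4\pi e\,s)^{-1/2}$ for $0<s<\delta$; integrating from $0$ to $t<\delta$ and using $\int_0^t(4\pi e\,s)^{-1/2}\,ds=\sqrt{t/(\pi e)}$ yields $(1-\varepsilon)\sqrt{t/(\pi e)}\le F^x(t)\le(1+\varepsilon)\sqrt{t/(\pi e)}$. Hence $F^x(t)/\sqrt t\to(\pi e)^{-1/2}$ as $t\downarrow 0$, and by the first paragraph $C_0(x)=(\pi e)^{-1/2}$.

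The genuine content sits entirely in the entropy limit \eqref{eq:limitEnt}, which rests on the pmG-convergence of the rescalings of $X$ at a point of $\calR_N^*$ to Euclidean space; granting that, the remaining work is elementary. The only point needing a little care is the passage from the pointwise asymptotics of $U_N(\mu_s^x)^{-1}$ to that of its integral $F^x$ across the integrable singularity at $s=0$, which the uniform comparison above handles. I would also note that the manipulations are legitimate because, by facts (e) and (g), $t\mapsto U_N(\mu_t^x)$ is positive and continuous, so $F^x$ is well defined and the limits above make sense.
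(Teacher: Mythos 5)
Your proof is correct and takes essentially the same route as the paper: both extract the short-time asymptotics $U_N(\mu_t^x)\sim\sqrt{4\pi e\,t}$ from the entropy limit (\ref{eq:limitEnt}) via the rescaling identity $\Ent_{\alpha_t\calH^N}(\mu_t^x)=\Ent_{\calH^N}(\mu_t^x)-\log\alpha_t$ together with $\log\alpha_t=-\tfrac N2\log t+o(1)$ at a point of $\calR_N^*$, and then integrate to obtain $F^x(t)/\sqrt t\to(\pi e)^{-1/2}$. Your explicit identification $C_0(x)=\inf_{t>0}F^x(t)/\sqrt t=\lim_{t\downarrow0}F^x(t)/\sqrt t$ via monotonicity, and your inline $\varepsilon$-argument for the integral, merely spell out steps the paper leaves implicit or delegates to its subsequent elementary lemma.
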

 \begin{proof}
  The constant $\alpha_t$ is as in (\ref{eq:alpha}). For abbreviation, we denote $(X,d_{t^{1/2}},\alpha_t\calH^N)$ by $X_t$. we define $\mu_t^X(dy):=p(x,y,t)\calH^N(dy)$ and $\mu^{X_t}_1(dy)=p^{X_t}(x,y,1)(\alpha_t\calH^N)(dy)$. Then 
  \begin{align}
   &\Ent_{\calH^N}(\mu_t^X)=\int_Xp(x,y,t)\log p(x,y,t)\,\calH^N(dy)\notag\\
   &=\int_X\left(\frac{1}{\alpha_t}p(x,y,t)\right)\log\left(\frac{1}{\alpha_t}p(x,y,t)\right)\left(\alpha_t\calH^N\right)(dy)\notag\\
   &-\int_X\left(p(x,y,t)\right)\left(\log\frac{1}{\alpha_t}\right)\,\calH^N(dy)\notag\\
   &=\int_Xp^{X_t}(x,y,1)\log p^{X_t}(x,y,1)\,(\alpha_t\calH^N)(dy)-\log\frac{1}{\alpha_t}.\notag
  \end{align}
  Hence 
  \begin{align}
   \Ent_{\calH^N}(\mu_t^X)=\Ent_{\alpha_t\calH^N}(\mu_1^{X_t})-\log\left(\frac{\calH^N(B_{t^{1/2}}(x))}{\omega_Nt^{N/2}}\right)-\log t^{N/2}.\notag
  \end{align}
  Thus we obtain 
  \begin{align}
   &\exp\left(-\frac{1}{N}\Ent_{\calH^N}(\mu_t^X)\right)\notag\\
   &=\exp\left(-\frac{1}{N}\Ent_{\alpha_t\calH^N}(\mu_1^{X_t})\right)\times \left(\frac{\calH^N(B_{t^{1/2}}(x))}{\omega_Nt^{N/2}}\right)^{1/N}\times t^{1/2}.\notag
  \end{align}
  Since $x\in\calR_N$, combining (\ref{eq:limitEnt}) and the above calculation, we have 
  \begin{align}
   \lim_{t\rightarrow +0}\frac{U_N(\mu_t^X)}{t^{1/2}}=(4\pi e)^{1/2}.\notag
  \end{align}
  Finally, since $(F^x)'(t)=U_N(\mu_t^X)^{-1}$, we obtain 
  \begin{align}
   \lim_{t\rightarrow +0}\frac{F^x(t)}{t^{1/2}}=\frac{2}{(4\pi e)^{1/2}}=\frac{1}{\sqrt{\pi e}}\notag
  \end{align}
  by using the lemma below. 
 \end{proof}
 \begin{lem}
  Suppose there exists a positive constant $A>0$ such that 
  \begin{align}
   \lim_{t\downarrow 0}\frac{g(t)}{t^{1/2}}=A\notag
  \end{align}
  holds for a continuous function $g:[0,\infty)\rightarrow \R$. Then it holds 
  \begin{align}
   \lim_{t\downarrow 0}\frac{1}{t^{1/2}}\int_0^t\frac{ds}{g(s)}=\frac{2}{A}.\notag
  \end{align}
 \end{lem}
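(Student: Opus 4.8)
The plan is to treat this as an elementary limit of a $0/0$ indeterminate form, either by a direct squeeze estimate or by L'Hôpital's rule. Write $N(t):=\int_0^t\frac{ds}{g(s)}$ and $D(t):=t^{1/2}$, so that the object of interest is the ratio $N(t)/D(t)$. The first thing I would do is unpack the hypothesis $g(t)/t^{1/2}\to A>0$: for every $\varepsilon\in(0,A)$ there is $\delta>0$ with
\[
 (A-\varepsilon)\,s^{1/2}\leq g(s)\leq (A+\varepsilon)\,s^{1/2}\qquad(0<s\leq\delta).
\]
In particular $g$ is strictly positive on $(0,\delta]$, and $1/g(s)\leq (A-\varepsilon)^{-1}s^{-1/2}$ there, so $1/g$ is integrable near the origin and $N(t)$ is a genuine finite continuous function with $N(t)\to 0$ as $t\downarrow 0$; since $D(t)\to 0$ as well, we are indeed in the $0/0$ situation.

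For the squeeze approach I would simply invert and integrate the two-sided bound. From $g(s)\leq(A+\varepsilon)s^{1/2}$ we get $1/g(s)\geq (A+\varepsilon)^{-1}s^{-1/2}$, and from $g(s)\geq(A-\varepsilon)s^{1/2}$ we get $1/g(s)\leq (A-\varepsilon)^{-1}s^{-1/2}$; integrating from $0$ to $t<\delta$ yields
\[
 \frac{2t^{1/2}}{A+\varepsilon}\leq N(t)\leq \frac{2t^{1/2}}{A-\varepsilon},
\]
hence $\frac{2}{A+\varepsilon}\leq N(t)/t^{1/2}\leq \frac{2}{A-\varepsilon}$ for all $0<t<\delta$. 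Letting $\varepsilon\downarrow 0$ gives $\lim_{t\downarrow 0}N(t)/t^{1/2}=2/A$, which is exactly the claim.

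Alternatively, and perhaps more transparently, I would invoke L'Hôpital's rule directly. Both $N$ and $D$ are differentiable on $(0,\delta)$ with $N'(t)=1/g(t)$ and $D'(t)=\tfrac12 t^{-1/2}\neq0$, so
\[
 \frac{N'(t)}{D'(t)}=\frac{2t^{1/2}}{g(t)}=\frac{2}{g(t)/t^{1/2}}\xrightarrow[t\downarrow0]{}\frac{2}{A},
\]
and the rule delivers $\lim_{t\downarrow0}N(t)/D(t)=2/A$. There is no substantive obstacle in this lemma; the only point that genuinely needs a word of care is the convergence of $\int_0^t ds/g(s)$ at the lower endpoint, and this is guaranteed precisely by the lower bound $g(s)\geq(A-\varepsilon)s^{1/2}$ recorded at the outset.
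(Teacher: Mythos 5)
Your squeeze argument is correct and is essentially the paper's own proof: both convert the hypothesis into two-sided bounds on $g(s)$ near $0$, invert, integrate $s^{-1/2}$ to get $2t^{1/2}$, and let $\varepsilon\downarrow 0$. Your added remarks (integrability of $1/g$ near the origin, and the L'H\^opital alternative) are fine but not needed beyond the paper's reasoning.
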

 \begin{proof}
  By the assumption, for any $\varepsilon\in(0,1)$, there exists a $t_0>0$ such that 
  \begin{align}
   A(1-\varepsilon)<\frac{g(t)}{t^{1/2}}<A(1+\varepsilon)\notag
  \end{align}
  for any $t\in (0,t_0)$. Hence for $0<t<t_0$, we have 
  \begin{align} 
   \frac{1}{t^{1/2}}\int_0^t\frac{ds}{g(s)}>\frac{1}{t^{1/2}}\int_0^t\frac{1}{s^{1/2}}\frac{(1+\varepsilon)^{-1}}{A}\,ds=\frac{1}{t^{1/2}}\cdot \frac{(1+\varepsilon)^{-1}}{A}2t^{1/2}=\frac{2(1+\varepsilon)^{-1}}{A}.\notag
  \end{align}
  By the same argument, we also have 
  \begin{align}
  \frac{1}{t^{1/2}}\int_0^t\frac{ds}{g(s)}<\frac{1}{t^{1/2}}\int_0^t\frac{1}{s^{1/2}}\frac{(1-\varepsilon)^{-1}}{A}\,ds=\frac{1}{t^{1/2}}\cdot \frac{(1-\varepsilon)^{-1}}{A}2t^{1/2}=\frac{2(1-\varepsilon)^{-1}}{A}\notag
 \end{align}
 for any $0<t<t_0$. 
 Since $\varepsilon>0$ is arbitrary, one has 
 \begin{align}
  \limsup_{t\downarrow 0}=\liminf_{t\downarrow 0}=\frac{2}{A}.\notag
 \end{align}
 \end{proof}
 \begin{rem}[The case of $\RCD(0,N)$ spaces of dimension less than $N$]\label{rem:C0coll}
  Let $(X,d,\m)$ be an $\RCD(0,N)$ space of the dimension $k:=\dim_{ess}(X,d,\m)$ and assume $k<N$. Then by a similar calculation, we have the following; 
  For $\m$-a.e. $x\in \calR_k$, it holds  
  \begin{align}
   \lim_{t\rightarrow +0}\frac{F(t)}{t^{1-\frac{k}{2N}}}=\left(1-\frac{k}{2N}\right)^{-1}(4\pi e)^{-\frac{k}{2N}}\times \left(\frac{d\m\vert_{\calR_k}}{d\calH^k}(x)\right)^{-1/N}\in(0,+\infty).\notag
  \end{align}
  Hence $C_0(x)=0$ for a.e. $x\in X$ for $\RCD(0,N)$ spaces of $\dim_{ess}<N$.  
 \end{rem}
 \begin{rem}[The case of non-regular points]\label{rem:singpt}
  On the contrary of the above remark, the infinitesimal behavior of $F^x(t)$ can be $\sqrt{t}$ for a singular point $x$. We give several examples in the following; 
  \begin{enumerate}
   \item Consider the weighted space $([0,\infty),d_E,r^{N-1}dr)$. It is easy to find that $([0,\infty),d_E,r^{N-1}dr)$ is an $\RCD(0,N)$ space but it is \emph{not} non-collapsing. The heat kernel at $0$ is given by 
   \begin{align}
    p(0,x,t)=\frac{c_N}{\sqrt{t}}e^{-\frac{x^2}{2t}},\notag
   \end{align} 
   where $c_N$ is the normalized constant. Then by a simple calculation, we obtain 
   \begin{align}
    C_0(0)=2c_N^{-1/N}e^{-1/2}>0.\notag
   \end{align}
   \item On the upper-half space $(\R\times \R_{\geq0},d_E,\calH^2)$ which is a non-collapsing $\RCD(0,2)$ space, the heat kernel is written by 
   \begin{align}
    p(x,y,t)=(4\pi t)^{-1}\left[\exp\left(-\frac{\lv x-y\rv^2}{4t}\right)+\exp\left(-\frac{\lv x-y_*\rv^2}{4t}\right)\right]\notag
   \end{align}
   where $y_*=(y_1,-y_2)$ for $y=(y_1,y_2)$ (see \cite{IK}). Also by a simple calculation, we have 
   \begin{align}
    C_0((0,0))=\sqrt{\frac{2}{\pi e}}>0.\notag
   \end{align}  
   \item More generally, it is known in \cite{HP}*{Proposition 2.10} that the heat kernel of an $N$-metric measure cone $(X,d,\m)$ (see Definition \ref{def:mmcone}) is 
   \begin{align}
    p(x,y,t)=ct^{-N/2}\exp\left(-\frac{d(x,y)^2}{4t}\right),\notag
   \end{align}
   where $x$ is the pole and the normalized constant $c$ is 
   \begin{align}
    c=\frac{2^{1-N}}{N\m(B_1(x))\Gamma(N/2)}.\notag
   \end{align}
   Hence by a similar calculation, we obtain 
   \begin{align}
    C_0(x)=(\pi e)^{-1/2}\left(\frac{\m(B_1(x))}{\omega_N}\right)^{-1/N}.\notag
   \end{align} 
  \end{enumerate}
 \end{rem}
For a given $\nu\in\calP_2(X)$, we define the set of \emph{barycenters} $\sfb(\nu)$ by 
\begin{align}
 \sfb(\nu)=\left\{x\in X\;;\;\int_{X}d^2(x,y)\nu(dy)=\min_{z\in X}\int_Xd^2(z,y)\nu(dy)\right\}.\notag
\end{align}
Note that $\sfb(\nu)\neq\emptyset$ for any $\nu\in\calP_2(X)$ by the properness of $(X,d)$. For generic metric space, minimizers may not be unique, but it does not matter in the proof later. Also we define 
\begin{align}
 \Var(\nu):=\min_{x\in X}\int_Xd^2(x,y)\nu(dy)\notag
\end{align} 
for $\nu\in\calP_2(X)$. 
 \begin{thm}[Shannon's inequality on non-collapsing $\RCD(0,N)$ spaces]\label{thm:shannonRCD}
  Let $(X,d,\calH^N)$ be a non-collapsing $\RCD(0,N)$ space and $\nu\in\calD(\Ent_{\calH^N})$. Assume $\sfb(\nu)\cap\calR_N\neq \emptyset$. Then 
  \begin{align}
   U_N(\nu)\leq \sqrt{\frac{2\pi e}{N}\Var(\nu)}\label{eq:shannonnoncol}
  \end{align}
  holds. 
 \end{thm}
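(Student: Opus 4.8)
The plan is to reproduce the $\R^n$ argument essentially verbatim, replacing the exact Gaussian evaluation of the auxiliary function by the sharp lower bound furnished by Lemmas~\ref{lem:key1} and~\ref{lem:key2}. First I would use the hypothesis $\sfb(\nu)\cap\calR_N^*\neq\emptyset$ to choose a point $\omega\in\sfb(\nu)\cap\calR_N^*$; being a barycenter gives $W_2^2(\nu,\delta_\omega)=\Var(\nu)$, while being in $\calR_N^*$ is exactly what controls the heat kernel at $\omega$. I set $\mu_t:=\mu_t^\omega=p(\omega,\cdot,t)\calH^N$, which by fact~(e) is the $\EVI_{0,N}$-gradient flow of $\Ent_{\calH^N}$ issuing from $\delta_\omega$, and abbreviate $\theta^2:=\Var(\nu)=W_2^2(\nu,\mu_0)$ together with $F^\omega(t):=\int_0^tU_N(\mu_s^\omega)^{-1}\,ds$.

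Next I would integrate the $\EVI_{0,N}$ inequality~\eqref{eq:evikn} along $\mu_t$ with reference measure $z=\nu$; with $K=0$ the linear term vanishes and $\fraks_0(\tfrac12 W_2)^2=\tfrac14 W_2^2$, so exactly as in the Euclidean proof one arrives at
\begin{align}
 U_N(\nu)\leq \frac{1}{F^\omega(t)}\left(t+\frac{1}{2N}\left(\theta^2-W_2^2(\mu_t,\nu)\right)\right)\leq \frac{1}{F^\omega(t)}\left(t+\frac{\theta^2}{2N}\right),\notag
\end{align}
valid for a.e.\ $t>0$ and hence, both sides being continuous, for every $t>0$. The single place where the non-collapsing assumption enters is the replacement of $F^\omega$ by its lower bound: since $\omega\in\calR_N^*$, Lemma~\ref{lem:key2} gives $C_0(\omega)=(\pi e)^{-1/2}$, and so by Lemma~\ref{lem:key1} we have $F^\omega(t)\geq(\pi e)^{-1/2}\sqrt{t}$ for all $t>0$.

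Finally, mimicking the Euclidean proof, I would substitute $t=\theta^2/(2N)$, for which $t+\theta^2/(2N)=\theta^2/N$, and insert the lower bound on $F^\omega$ to obtain
\begin{align}
 U_N(\nu)\leq \frac{\theta^2/N}{(\pi e)^{-1/2}\sqrt{\theta^2/(2N)}}=\sqrt{\frac{2\pi e}{N}\,\theta^2}=\sqrt{\frac{2\pi e}{N}\Var(\nu)},\notag
\end{align}
which is~\eqref{eq:shannonnoncol}. All the substantive work is thus already packaged in Lemma~\ref{lem:key2}, i.e.\ in matching the infinitesimal behaviour of $U_N(\mu_t^\omega)$ at a regular point with the Euclidean value $(4\pi e t)^{1/2}$; the remainder is the EVI manipulation and the same one-line optimization in $t$ that produces the sharp constant in the classical Shannon inequality. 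I do not anticipate a genuine obstacle beyond verifying that the a.e.-in-$t$ inequality upgrades to every $t$ by continuity, which is already handled in the proof of Lemma~\ref{lem:key1}.
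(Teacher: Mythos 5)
Your proposal is correct and follows essentially the same route as the paper: pick $\omega\in\sfb(\nu)\cap\calR_N^*$, run the integrated $\EVI_{0,N}$ argument from the Euclidean case to get $U_N(\nu)\leq F^\omega(t)^{-1}\bigl(t+\tfrac{1}{2N}\Var(\nu)\bigr)$, bound $F^\omega(t)\geq(\pi e)^{-1/2}\sqrt{t}$ via Lemmas~\ref{lem:key1} and~\ref{lem:key2}, and optimize at $t=\Var(\nu)/(2N)$. Your write-up is in fact slightly more careful than the paper's, which states the intermediate bound with a typographical slip ($\Var(\nu)/N$ in place of $\Var(\nu)/(2N)$) before substituting the correct value.
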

 \begin{proof}
  Take a point $x\in \calR_N\cap \sfb(\nu)$. As the same proof in the case of $\R^N$, we have 
  \begin{align}
   U_N(\nu)\leq \frac{1}{F^x(t)}\left(t+\frac{\Var(\nu)}{2N}\right)\notag
  \end{align}
  for any $t>0$. Combining Lemma \ref{lem:key1} and Lemma \ref{lem:key2} leads 
  \begin{align}
   U_N(\nu)&\leq \frac{1}{F^x(\Var(\nu)/2N)}\left(\frac{\Var(\nu)}{2N}+\frac{\Var(\nu)}{2N}\right)\notag\\
   &\leq \sqrt{\pi e\frac{2N}{\Var(\nu)}}\times \frac{\Var(\nu)}{N}\notag\\
   &=\sqrt{\frac{2\pi e}{N}\Var(\nu)}.\notag
  \end{align}
 \end{proof}
We give the following Corollaries by the same argument, thus we omit the proof. 
\begin{cor}
 Let $(X,d,\m)$ be an $\RCD(0,N)$ space and $\nu\in\calP_2(X)$. Take a point $x\in \sfb(\nu)$ and assume there exists a positive constant $D_0>0$ such that 
 \begin{align}
  \lim_{t\downarrow 0}\frac{U_N(p(x,y,t)\m(dy))}{\sqrt{t}}=2D_0.\notag
 \end{align}
 Then we have 
 \begin{align}
  U_N(\nu)\leq \sqrt{\frac{2D_0^2}{N}\Var(\nu)}.\notag
 \end{align}
\end{cor}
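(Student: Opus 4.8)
Since the inequality is trivial when $U_N(\nu)=0$, we may assume $\nu\in\calD(\Ent_{\m})$, and the plan is to reproduce verbatim the scheme of Theorem \ref{thm:shannonRCD}, the only change being that the explicit value $C_0(x)=(\pi e)^{-1/2}$ supplied by Lemma \ref{lem:key2} is replaced by the constant read off from the hypothesized heat-flow asymptotics. First I would put $\mu_t^x(dy):=p(x,y,t)\m(dy)$ and $F^x(t):=\int_0^t U_N(\mu_s^x)^{-1}\,ds$, and note that $x\in\sfb(\nu)$ gives $W_2^2(\delta_x,\nu)=\Var(\nu)$. Integrating the $\EVI_{0,N}$ inequality (\ref{eq:evikn}) along $\mu_t^x$ with the fixed measure $z=\nu$ from $0$ to $t$ — exactly the $\R^n$ computation — and discarding the nonnegative term $\tfrac14 W_2^2(\mu_t^x,\nu)$ yields the basic estimate
\begin{align}
 U_N(\nu)\leq \frac{1}{F^x(t)}\left(t+\frac{\Var(\nu)}{2N}\right)\notag
\end{align}
for every $t>0$.

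The next step is to pin down the sharp lower bound for $F^x$. Lemma \ref{lem:key1} applies unchanged to the present (general, possibly collapsing) $\RCD(0,N)$ space and to arbitrary $x$: its proof uses only the bound $\theta_t^2\leq 2Nt$ of Lemma \ref{lem:theta} and the regularity fact (g), neither of which needs non-collapsing or $x\in\calR_N^*$. Thus $F^x(t)/\sqrt{t}$ is non-decreasing, so the optimal constant $C_0(x)$ in $F^x(t)\geq C_0(x)\sqrt{t}$ equals $\lim_{t\downarrow 0}F^x(t)/\sqrt{t}$. Applying the elementary analytic Lemma above with $g(s)=U_N(\mu_s^x)$ and $A=2D_0$ to the hypothesis $\lim_{t\downarrow0}U_N(\mu_t^x)/\sqrt{t}=2D_0$ gives $\lim_{t\downarrow0}F^x(t)/\sqrt{t}=2/(2D_0)=1/D_0$, hence $C_0(x)=1/D_0$ and $1/F^x(t)\leq D_0/\sqrt{t}$ for all $t>0$.

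Finally I would substitute $t=\Var(\nu)/2N$ into the basic estimate and use $1/F^x(t)\leq D_0/\sqrt{t}$ to get
\begin{align}
 U_N(\nu)\leq D_0\sqrt{\frac{2N}{\Var(\nu)}}\cdot\frac{\Var(\nu)}{N}=\sqrt{\frac{2D_0^2}{N}\Var(\nu)},\notag
\end{align}
as desired. As a check, Lemma \ref{lem:key2} is the case $2D_0=(4\pi e)^{1/2}$, i.e. $D_0=\sqrt{\pi e}$, whence $2D_0^2=2\pi e$ and the inequality reduces to (\ref{eq:shannonnoncol}).

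The one genuinely delicate point — the reason the statement holds even for singular base points — is the passage from the $t\downarrow0$ slope to a bound valid at the single time $t=\Var(\nu)/2N$ actually used. This is exactly what the monotonicity of $F^x(t)/\sqrt{t}$ in Lemma \ref{lem:key1} buys: it promotes the limit $1/D_0$ into a uniform lower bound for $F^x$, so that a hypothesis on the infinitesimal behaviour of the heat flow alone suffices, with no non-collapsing or regularity requirement on $x$. Everything else is the routine EVI integration already performed for $\R^n$ and in Theorem \ref{thm:shannonRCD}.
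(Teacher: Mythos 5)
Your proposal is correct and is precisely the argument the paper intends: the paper omits the proof, stating only that the corollary follows ``by the same argument'' as Theorem \ref{thm:shannonRCD}, and your write-up reproduces that scheme faithfully — EVI integration with $z=\nu$, monotonicity of $F^x(t)/\sqrt{t}$ from Lemma \ref{lem:key1} to promote the infinitesimal slope $\lim_{t\downarrow0}F^x(t)/\sqrt{t}=1/D_0$ (via the elementary lemma, exactly as in Lemma \ref{lem:key2}) to a global bound, and substitution of $t=\Var(\nu)/2N$. Your version even silently corrects the paper's typo $\Var(\nu)/N$ to $\Var(\nu)/2N$ in the basic estimate, consistent with the $\R^n$ computation and the subsequent display in the paper.
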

 \begin{cor}\label{cor:weightedShannon}
  Let $(X,d,\beta\calH^N)$ be a non-collapsing $\RCD(0,N)$ space for $\beta>0$ and $\nu\in\calD(\Ent_{\beta\calH^N})$. Assume that $\sfb(\nu)\cap\calR_N\neq \emptyset$. Then 
  \begin{align}
   U_N(\nu)\leq \beta^{1/N}\sqrt{\frac{2\pi e}{N}\Var(\nu)}\notag
  \end{align} 
  holds. 
 \end{cor}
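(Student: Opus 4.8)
The plan is to deduce the weighted inequality from Theorem \ref{thm:shannonRCD} by a constant rescaling of the reference measure, exploiting that multiplying $\calH^N$ by a positive constant shifts the relative entropy only by an additive constant. For a probability measure $\nu=\rho\,(\beta\calH^N)\in\calD(\Ent_{\beta\calH^N})$ one may write the same measure as $\nu=(\beta\rho)\calH^N$, and using $\nu(X)=1$ a direct computation gives the identity
\begin{align}
 \Ent_{\beta\calH^N}(\nu)=\Ent_{\calH^N}(\nu)-\log\beta.\notag
\end{align}
Writing $U_N^{\beta\calH^N}$ and $U_N^{\calH^N}$ for the functional $U_N=\exp(-\Ent/N)$ attached to the two reference measures, this translates into the homogeneity relation
\begin{align}
 U_N^{\beta\calH^N}(\nu)=\beta^{1/N}\,U_N^{\calH^N}(\nu).\notag
\end{align}

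The main work is checking that the hypotheses of Theorem \ref{thm:shannonRCD} transfer to the unweighted space $(X,d,\calH^N)$. Multiplying the reference measure by the positive constant $\beta^{-1}$ preserves infinitesimal Hilbertianity, since $\Ch$ merely scales by a constant and the parallelogram identity is homogeneous; and because the displayed entropy identity shows the two relative entropies differ by a constant while $U_N$ is positively homogeneous, the $(0,N)$-convexity of $\Ent$ is preserved as well. Hence $(X,d,\calH^N)$ is again $\RCD(0,N)$. The essential dimension is invariant under a constant rescaling of the measure (the tangent-cone normalization $\m(B_r(x))^{-1}\m$ cancels the constant), so $(X,d,\calH^N)$ stays non-collapsing; moreover $\calD(\Ent_{\calH^N})=\calD(\Ent_{\beta\calH^N})\ni\nu$ by the additive identity, and both $\sfb(\nu)$ (which depends only on $d$ and $\nu$) and the set $\calR_N^*$ (defined through $\calH^N$ itself) are unchanged, so $\sfb(\nu)\cap\calR_N^*\neq\emptyset$ persists.

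Granting these verifications, Theorem \ref{thm:shannonRCD} applied to $(X,d,\calH^N)$ gives
\begin{align}
 U_N^{\calH^N}(\nu)\leq\sqrt{\frac{2\pi e}{N}\Var(\nu)},\notag
\end{align}
and multiplying by $\beta^{1/N}$ and invoking the homogeneity relation yields exactly the claimed bound, since $\Var(\nu)$ depends only on $d$ and $\nu$ and is identical in both normalizations. The only delicate point is thus the invariance of the non-collapsing hypothesis and of $\calR_N^*$ under the rescaling; the inequality itself is then immediate.

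As an alternative matching the paper's phrasing ``by the same argument,'' one could instead repeat the proof of Theorem \ref{thm:shannonRCD} verbatim: the sole input sensitive to the weight is the value of $C_0(x)$ from Lemma \ref{lem:key1}. Recomputing Lemma \ref{lem:key2} with reference measure $\beta\calH^N$—noting by (\ref{eq:heatrescaled}) that the heat kernel of $(X,d,\beta\calH^N)$ is $\beta^{-1}p$, so the flow measure $\mu_t$ is unchanged while $U_N$ acquires a factor $\beta^{1/N}$—gives $C_0(x)=\beta^{-1/N}(\pi e)^{-1/2}$ for $x\in\calR_N^*$. Feeding this into the final substitution $t=\Var(\nu)/2N$ reproduces the same conclusion.
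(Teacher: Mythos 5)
Your proposal is correct. The paper gives no explicit proof of this corollary, saying only that it follows ``by the same argument'' as Theorem \ref{thm:shannonRCD}, i.e.\ by re-running the whole proof with reference measure $\beta\calH^N$; your primary route is instead a reduction of the weighted statement to the unweighted one via the additive shift $\Ent_{\beta\calH^N}(\nu)=\Ent_{\calH^N}(\nu)-\log\beta$, equivalently $U_N^{\beta\calH^N}(\nu)=\beta^{1/N}U_N^{\calH^N}(\nu)$. This is a slightly different and arguably cleaner route: its only burden is verifying that the hypotheses transfer under constant rescaling of the measure, and your checks are all sound --- $(K,N)$-convexity is preserved because the convexity inequality is homogeneous in $U_N$, infinitesimal Hilbertianity survives the constant scaling of $\Ch$, the essential dimension (hence non-collapsing) is unaffected by the normalization in the tangent-cone construction, and $\sfb(\nu)$, $\Var(\nu)$, $\calR_N^*$ depend only on $d$, $\nu$, $\calH^N$. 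Notably, the paper itself uses exactly this additive-shift identity in Section 5 when passing to the weighted uncertainty principle, so the reduction is fully in the spirit of the text. Your alternative paragraph is precisely the paper's intended ``same argument'': by (\ref{eq:heatrescaled}) the heat kernel of $(X,d,\beta\calH^N)$ is $\beta^{-1}p$, so the flow measure $p(x,\cdot,t)\calH^N$ is unchanged while $U_N$ acquires the factor $\beta^{1/N}$, whence Lemma \ref{lem:key2} gives $C_0(x)=\beta^{-1/N}(\pi e)^{-1/2}$, and substituting $t=\Var(\nu)/2N$ as in the proof of Theorem \ref{thm:shannonRCD} (with Lemma \ref{lem:key1}) reproduces the bound $U_N(\nu)\leq \beta^{1/N}\sqrt{\tfrac{2\pi e}{N}\Var(\nu)}$. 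Either route is complete; having both recorded is a strength, since the reduction makes transparent where the factor $\beta^{1/N}$ comes from.
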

%
%
\section{Rigidity result}
In this section, we prove a rigidity result. 
In the beginning, we define cone metric measure spaces. 
 \begin{defn}[$N$-metric measure cone \cite{Kcone,HP}]\label{def:mmcone}
  For $N\geq 2$, we define the \emph{$N$-metric measure cone} $(C(Y),d_{C(Y)},\m_{C(Y)})$ over an $\RCD(N-2,N-1)$ space $(Y,d_Y,\m_Y)$, whose diameter is at most $\pi$ if $N=2$, by 
  \begin{align}
   &C(Y):=[0,\infty)\times Y/(\{0\}\times Y),\notag\\
   &d_{C(Y)}((t_1,y_1),(t_2,y_2)):=\sqrt{t_1^2+t_2^2-2t_1t_2\cos(d_Y(y_1,y_2))},\notag\\
   &d\m_{C(Y)}(t,y):=t^{N-1}dt\otimes d\m_Y(y),\notag
  \end{align}
  where $dt$ is the 1-dimensional Lebesgue measure. $O_Y:=[(0,y)]$ is called the pole of $C(Y)$.  
 \end{defn}
\begin{rem}
 Since the diameter of $\RCD(N-2,N-1)$ spaces appeared in Definition \ref{def:mmcone} are at most $\pi$ (see \cite{St2,LV}), the metric $d_{C(Y)}$ is well-defined. It is known that $(C(Y),d_{C(Y)},\m_{C(Y)})$ is an $\RCD(0,N)$ space \cite{Kcone}. 
\end{rem}
We assume that $(X,d,\m)$ is an $\RCD(0,N)$ space and with $\di(X,d)>0$. And we also assume the following; 
\begin{assump}\label{assump:rigid}
 There exists a point $x_0\in X$ and a positive constant $D_0>0$ such that 
 \begin{align}
  U_N(\mu_t)=\sqrt{\frac{2D_0^2 }{N}\theta_t^2}\tag{$\ast$}
 \end{align}
 holds for any $t>0$, where 
 \begin{align}
  \mu_t(dy):=p(x_0,y,t)\m(dy),\quad \theta^2_t:=W_2^2(\mu_t,\delta_{x_0}).\notag
 \end{align} 
\end{assump} 
 \begin{rem}
  In the equality ($\ast$), we use $\theta_t^2$ instead of $\Var(\mu_t)$. On an $\RCD$ space $(X,d,\m)$, the set of barycenters of $p(x_0,y,t)\m(dy)$ does not always include $x_0$. See Appendix A.  
 \end{rem}
 \begin{lem}\label{lem:C0rigid}
  Under the Assumption \ref{assump:rigid}, it holds that 
  \begin{align}
   C_0=\frac{1}{D_0}\notag
  \end{align}
  where $C_0$ is the constant appeared in Lemma \ref{lem:key1}. 
 \end{lem}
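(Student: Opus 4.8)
The plan is to prove the two inequalities $C_0\ge 1/D_0$ and $C_0\le 1/D_0$ separately; since $C_0=C_0(x_0)$ is by Lemma \ref{lem:key1} the largest constant for which $F^{x_0}(t)\ge C_0\sqrt{t}$ holds for all $t>0$, the two bounds give the claim. Throughout I abbreviate $F:=F^{x_0}$ and recall from fact (g) that $F$ is locally absolutely continuous with $F'(t)=U_N(\mu_t)^{-1}$ for a.e.\ $t>0$.

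For the lower bound I would combine the equality $(\ast)$ with Lemma \ref{lem:theta}. Writing $(\ast)$ as $U_N(\mu_t)=D_0\sqrt{2/N}\,\theta_t$ and using $\theta_t^2\le 2Nt$ gives $U_N(\mu_t)\le 2D_0\sqrt{t}$, whence $F'(t)=U_N(\mu_t)^{-1}\ge (2D_0\sqrt{t})^{-1}$ for a.e.\ $t$. Integrating this on $(0,t)$ and invoking the local absolute continuity of $F$ yields $F(t)\ge \sqrt{t}/D_0$ for every $t>0$, so $C_0\ge 1/D_0$; in particular $C_0>0$, which legitimizes the divisions below.

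For the upper bound the point is to re-derive, based at $x_0$, the Shannon-type estimate carrying the constant $C_0$ itself, and then to test it against the family $(\mu_t)_t$. Exactly as in the proof of Lemma \ref{lem:key1} (integrate the $\EVI_{0,N}$ inequality \eqref{eq:evikn} along the flow $(\mu_r)_r$ from $0$ to $s$ against an arbitrary $\nu\in\calP_2(X)$, use $\tfrac14 W_2^2(\mu_s,\nu)\ge 0$ and $\mu_0=\delta_{x_0}$) one gets, for every $\nu$ and every $s>0$,
\[
 U_N(\nu)\,F(s)\le s+\frac{W_2^2(\delta_{x_0},\nu)}{2N}.
\]
Substituting the defining lower bound $F(s)\ge C_0\sqrt{s}$ and minimizing the right-hand side over $s>0$ (the minimizer being $s=W_2^2(\delta_{x_0},\nu)/2N$, just as in the proof of Theorem \ref{thm:shannonRCD}) produces
\[
 U_N(\nu)\le \frac{1}{C_0}\sqrt{\tfrac{2}{N}}\;W_2(\delta_{x_0},\nu)\qquad\text{for every }\nu\in\calP_2(X).
\]
Applying this to $\nu=\mu_t$, for which $W_2(\delta_{x_0},\mu_t)=\theta_t$, and comparing with $(\ast)$ gives $D_0\sqrt{2/N}\,\theta_t\le C_0^{-1}\sqrt{2/N}\,\theta_t$. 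Since $\theta_t>0$ for $t>0$, cancelling the common factor leaves $D_0\le 1/C_0$, i.e.\ $C_0\le 1/D_0$, and the two bounds together give $C_0=1/D_0$.

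The delicate step, conceptually rather than computationally, is the upper bound. One must keep the flow time $s$ free and optimize it, rather than inserting the evaluation time $t$ of $\mu_t$: plugging $s=t$ merely reproduces a tautology, because $(\ast)$, Lemma \ref{lem:theta}, and the key inequality are mutually tight at that scale. The circularity is broken by choosing the matched scale $s=\theta_t^2/2N$, which probes the self-similar family $(\mu_t)_t$ against the sharp constant $C_0$ and converts the equality $(\ast)$ into the reverse bound $C_0\le 1/D_0$.
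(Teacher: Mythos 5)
Your proof is correct and follows essentially the same route as the paper's: the lower bound $C_0\ge 1/D_0$ by integrating $U_N(\mu_s)^{-1}\ge(2D_0\sqrt{s})^{-1}$ (via Lemma \ref{lem:theta}), and the upper bound by testing the EVI-derived inequality $U_N(\nu)F(s)\le s+W_2^2(\delta_{x_0},\nu)/2N$ at the matched scale $s=\theta_t^2/2N$ together with $F(s)\ge C_0\sqrt{s}$. The only difference is presentational --- you first package the upper bound as a Shannon-type inequality with constant $1/C_0$ and then apply it to $\nu=\mu_t$, while the paper substitutes $\nu=\mu_{t_0}$, $s=\theta_{t_0}^2/2N$ directly --- which is the same computation.
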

 \begin{proof}
  We have 
  \begin{align}
   F(t)&=\int_0^t\frac{ds}{U_N(\mu_s)}=\int_0^t\left(\sqrt{\frac{2D_0^2}{N}\theta_s^2}\right)^{-1}\,ds\notag\\
   &=\frac{1}{D_0}\int_0^t\sqrt{\frac{2Ns}{\theta_s^2}}\times \frac{ds}{2\sqrt{s}}\notag\\
   &\geq \frac{1}{D_0}\int_0^t\frac{1}{2\sqrt{s}}\,ds\notag\\
   &=\frac{\sqrt{t}}{D_0}\notag
  \end{align}
  by Lemma \ref{lem:theta}. Thus $C_0\geq D_0^{-1}$. 
  \par As in the first part of the proof of Theorem \ref{thm:shannonRCD} with (\ref{eq:UnRn}), for fixed $t_0>0$, we have 
  \begin{align}
   \sqrt{\frac{2D_0^2}{N}\theta^2_{t_0}}&=U_N(\mu_{t_0})\leq \left.\frac{1}{F(t)}\left(t+\frac{\theta_{t_0}^2}{2N}\right)\right\vert_{t=\frac{\theta^2_{t_0}}{2N}}\notag\\
   &=\frac{1}{F(\theta_{t_0}^2/2N)}\frac{\theta_{t_0}^2}{N}\notag\\
   &\leq \frac{1}{C_0}\sqrt{\frac{2N}{\theta_{t_0}^2}}\times \frac{\theta_{t_0}^2}{N}\notag\\
   &=\frac{1}{C_0}\sqrt{\frac{2\theta_{t_0}^2}{N}}.\label{eq:C0D0}
  \end{align}
  Hence $C_0\leq D_0^{-1}$. Therefore $C_0=D_0^{-1}$. 
 \end{proof}
 \begin{lem}\label{lem:thetaexplicit}
  Set $T_0:=\sup_{s>0}\theta^2_s/2N$. Under the Assumption \ref{assump:rigid}, it holds 
  \begin{align}
   \theta_t^2=2Nt\notag
  \end{align} 
  for any $t\in[0,T_0]$. 
 \end{lem}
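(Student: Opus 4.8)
The plan is to show that equality must hold in the estimate $\theta_t^2\le 2Nt$ of Lemma \ref{lem:theta}; all the ingredients are already in place from Lemmas \ref{lem:key1} and \ref{lem:C0rigid}, and the task is essentially to track when the inequalities used there degenerate to equalities.

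First I would record the two consequences of Assumption \ref{assump:rigid}. Substituting $U_N(\mu_s)=\sqrt{2D_0^2\theta_s^2/N}$ into the definition of $F$ gives
\begin{align}
F(t)=\int_0^t\frac{ds}{U_N(\mu_s)}=\frac{1}{D_0}\int_0^t\frac12\sqrt{\frac{2N}{\theta_s^2}}\,ds\ge\frac{1}{D_0}\int_0^t\frac{ds}{2\sqrt s}=\frac{\sqrt t}{D_0},\notag
\end{align}
where the inequality is exactly the one used in Lemma \ref{lem:C0rigid} and comes from $\theta_s^2\le 2Ns$. On the other hand, the optimized $\EVI$ estimate from the proof of Lemma \ref{lem:C0rigid}, namely $U_N(\mu_{t_0})\le F(t)^{-1}(t+\theta_{t_0}^2/2N)$ evaluated at $t=\theta_{t_0}^2/2N$, together with $C_0=D_0^{-1}$ (Lemma \ref{lem:C0rigid}), yields the reverse bound $F(\theta_{t_0}^2/2N)\le D_0^{-1}\sqrt{\theta_{t_0}^2/2N}$. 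Hence $F(\tau)=\sqrt\tau/D_0$ for every $\tau$ of the form $\tau=\theta_{t_0}^2/2N$.

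The heart of the argument is then the equality case: for such $\tau$,
\begin{align}
\frac{\sqrt\tau}{D_0}=F(\tau)=\frac{1}{D_0}\int_0^\tau\frac12\sqrt{\frac{2N}{\theta_s^2}}\,ds\ge\frac{\sqrt\tau}{D_0},\notag
\end{align}
so the integrand must satisfy $\tfrac12\sqrt{2N/\theta_s^2}=\tfrac1{2\sqrt s}$ for a.e. $s\in(0,\tau)$, i.e. $\theta_s^2=2Ns$ a.e. Since $s\mapsto\theta_s^2=W_2^2(\mu_s,\delta_{x_0})$ is continuous (the heat flow is $W_2$-continuous) and $\theta_0^2=0$, this self-improves to $\theta_s^2=2Ns$ for every $s$ in the closed interval up to $\tau$. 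Equivalently, one may differentiate the relation $F(t)=\sqrt t/D_0$ and use $F'(t)=U_N(\mu_t)^{-1}$ to solve directly for $\theta_t^2=2Nt$.

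The main obstacle is the upgrade from ``$\theta_s^2=2Ns$ for $s$ below the range of $\theta_{t_0}^2/2N$'' to ``for all $t>0$'': a priori one only controls $s<\sup_{t_0}\theta_{t_0}^2/2N$. I would close this by combining the continuity of $t\mapsto\theta_t^2$ (so that its image is an interval) with the monotonicity of $F(t)/\sqrt t$ established in Lemma \ref{lem:key1}, which forces $F(t)/\sqrt t\equiv C_0=D_0^{-1}$ on the whole interval below the supremum, and then ruling out a finite supremum of the variance $\theta_t^2$. This last exclusion---showing that the heat flow spreads indefinitely under the rigidity hypothesis---is the delicate point; everything preceding it is a routine unwinding of the equality cases already present in Lemmas \ref{lem:theta}--\ref{lem:C0rigid}.
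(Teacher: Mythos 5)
Your proposal is, in substance, the paper's own proof: the lower bound $F(t)\ge\sqrt t/D_0$ (coming from $\theta_s^2\le 2Ns$ of Lemma \ref{lem:theta} inside the integral), the upper bound $F(\theta_{t_0}^2/2N)\le D_0^{-1}\sqrt{\theta_{t_0}^2/2N}$ obtained from the integrated $\EVI_{0,N}$ inequality tested against $\nu=\mu_{t_0}$ together with $(\ast)$, and the unwinding of the resulting equality via the monotonicity of $F(t)/\sqrt t$ from Lemma \ref{lem:key1} and the pointwise bound $\sqrt{2Ns/\theta_s^2}\ge1$. All the steps you actually carry out are correct and coincide with the paper's.

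The point you flag as delicate --- ruling out $\sup_{t_0}\theta_{t_0}^2<\infty$ --- is exactly the step the paper passes over in silence: its proof ends with ``Since $t_0$ is arbitrary, (\ref{eq:Fanyt}) holds for any $t>0$'', which tacitly presumes that $\theta_{t_0}^2$ is unbounded in $t_0$. This is a genuine point (for heat flow on a space of finite measure $\theta_t^2$ stays bounded, so unboundedness must be extracted from $(\ast)$ itself), and neither you nor the paper proves it; your attempt is therefore exactly as complete as the printed proof, and more honest about where the hole is. To close it within the paper's toolbox: once $\theta_s^2=2Ns$ holds on some initial interval $(0,a]$, the computation in Lemma \ref{lem:theta} is an equality there, i.e. $\int_X p(x_0,y,s)\,d\bigl(N\m-{\bf\Delta}\tfrac{d_{x_0}^2}{2}\bigr)(y)=0$ for a.e. $s\in(0,a)$; since $N\m-{\bf\Delta}\tfrac{d_{x_0}^2}{2}\ge0$ by (\ref{eq:Lcomp}) and the heat kernel is strictly positive, this forces ${\bf\Delta}\tfrac{d_{x_0}^2}{2}=N\m$, hence the cone structure by Theorem 6.1 in \cite{GV}, and then the explicit cone heat kernel of Remark \ref{rem:singpt} (3) yields $\theta_t^2=2Nt$ for every $t>0$. (This is in effect what the paper's final Remark, relaxing $(\ast)$ to $0<t\le T_0$, relies upon.) Alternatively, property (e) in Section 2 combined with $(\ast)$ makes $t\mapsto\theta_t^2$ non-decreasing, and boundedness would force $\Ent_{\m}(\mu_t)$ to be eventually constant, hence vanishing Fisher information and a stationary kernel, contradicting $\mu_t\rightarrow\delta_{x_0}$ by injectivity of the heat semigroup.
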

 \begin{proof}
  Take an arbitrary $t_0>0$ and fix it. By (\ref{eq:C0D0}) in the proof of Lemma \ref{lem:C0rigid}, we have 
  \begin{align}
   \sqrt{\frac{2D_0^2 \theta_{t_0}^2}{N}}&=U_N(\mu_{t_0})=\left.\frac{1}{F(t)}\left(t+\frac{\theta_{t_0}^2}{2N}\right)\right\vert_{t=\frac{\theta_{t_0}^2}{2N}}\notag\\
   &=\frac{1}{F(\theta_{t_0^2}/2N)}\frac{\theta_{t_0}^2}{N}.\notag
  \end{align}
  Accordingly 
  \begin{align}
   F\left(\frac{\theta_{t_0}^2}{2N}\right)=\sqrt{\frac{\theta_{t_0}^2}{2D_0^2 N}}.\notag
  \end{align}
  Also by the definition of $C_0=D_0^{-1}$, we have 
  \begin{align}
   F(t)\geq \frac{1}{D_0}\sqrt{t}.\notag
  \end{align}
  Hence by the monotonicity of $t\mapsto F(t)/\sqrt{t}$, we have $D_0^{-1}\leq F(t)/\sqrt{t}\leq F(t_0)/\sqrt{t_0}=D_0$ for any $0<t\leq \theta_{t_0}^2/2N$. Thus we obtain 
  \begin{align}
   F(t)=D_0^{-1}\sqrt{t}\label{eq:Fanyt}
  \end{align}
   for any $t\leq \theta_{t_0}^2/2N$. Since $t_0$ is arbitrary, (\ref{eq:Fanyt}) holds for any $t\in[0,T_0]$. Therefore by the calculation in Lemma \ref{lem:C0rigid}, we have $\theta_t^2=2Nt$ for $t\in [0,T_0]$. 
 \end{proof}
 \begin{thm}\label{thm:conerigid}
  Under the Assumption \ref{assump:rigid}, we have that $(X,d,\m)$ is a metric measure cone. 
 \end{thm}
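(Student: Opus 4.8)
The plan is to show that Assumption \ref{assump:rigid} forces the Laplacian comparison (\ref{eq:Lcomp}) to be an equality at $x_0$, and then to invoke the nonsmooth ``volume cone implies metric cone'' rigidity. Concretely, I would first revisit the computation in the proof of Lemma \ref{lem:theta}. By Lemma \ref{lem:thetaexplicit} we have $\theta_t^2=2Nt$, so $\frac{1}{2}\frac{d}{dt}\theta_t^2=N$ for a.e.\ $t>0$, while the chain of identities culminating in (\ref{eq:Lapheat}), applied with $o=x_0$, gives
\[
 \frac{1}{2}\frac{d}{dt}\theta_t^2=\int_X p(x_0,y,t)\,d{\bf \Delta}\frac{d_{x_0}^2}{2}(y)\leq N\int_X p(x_0,y,t)\,\m(dy)=N,
\]
the inequality being exactly (\ref{eq:Lcomp}). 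Hence it is saturated for a.e.\ $t$, i.e.
\[
 \int_X p(x_0,y,t)\,d\Big(N\m-{\bf \Delta}\tfrac{d_{x_0}^2}{2}\Big)(y)=0.
\]

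Next I would upgrade this to a pointwise statement. The measure $\eta:=N\m-{\bf \Delta}\frac{d_{x_0}^2}{2}$ is a nonnegative Radon measure by (\ref{eq:Lcomp}), and since $\supp\m=X$ the heat kernel $p(x_0,\cdot,t)$ is strictly positive on all of $X$ for each fixed $t>0$. A nonnegative measure whose integral against an everywhere-positive function vanishes is the zero measure, so $\eta=0$, that is,
\[
 {\bf \Delta}\frac{d_{x_0}^2}{2}=N\m.
\]
Combined with $|\nabla d_{x_0}|=1$ $\m$-a.e.\ (valid on any geodesic $\RCD$ space), the function $u:=d_{x_0}^2/2$ satisfies $|\nabla u|^2=2u$ and ${\bf \Delta}u=N\m$, which is precisely the rigidity configuration associated with a metric measure cone of pole $x_0$.

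Finally I would convert this identity into the cone conclusion. Testing ${\bf \Delta}u=N\m$ against radial functions $\phi(u)$ and using $|\nabla u|^2=2u$, a disintegration of $\m$ along $d_{x_0}$ and an integration by parts reduce the identity to the ordinary differential equation $(s\,v(s))'=N\,v(s)$ for the radial density $v$, whence $v(s)=c\,s^{N-1}$ and $\m(B_r(x_0))=\frac{c}{N}r^N$. This is equality in the Bishop-Gromov inequality (a) for all radii, i.e.\ the volume cone condition, and the theorem of De Philippis--Gigli \cite{DG} (see also Ketterer \cite{Kcone}) then yields that $(X,d,\m)$ is a metric measure cone with pole $x_0$. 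Equivalently, one may argue at the level of the second-order calculus: inserting ${\bf \Delta}u=N\m$ and $|\nabla u|^2=2u$ into the self-improved $N$-Bochner inequality on $\RCD(0,N)$ spaces forces $\Hess u=g$, the infinitesimal cone condition.

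The hardest point will be the passage from the integrated equality to the pointwise identity ${\bf \Delta}\frac{d_{x_0}^2}{2}=N\m$: this requires the cut-off argument legitimizing (\ref{eq:Lapheat}) for the non-compactly-supported heat kernel (cf.\ the Remark following Lemma \ref{lem:theta}) together with the strict positivity of $p(x_0,\cdot,t)$, and then a careful appeal to the nonsmooth cone characterization in exactly the generality needed here (a global identity on all of $X$, rather than on a single annulus, so that letting the radius tend to infinity upgrades the ball statement of \cite{DG} to the whole space).
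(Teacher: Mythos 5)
Your proposal is correct, and its first half is exactly the paper's argument, just spelled out in more detail than the paper bothers to give: the paper's proof consists of the single assertion that Lemmas \ref{lem:theta} and \ref{lem:thetaexplicit} yield ${\bf \Delta}\frac{d_{x_0}^2}{2}=N\m$, and your saturation argument --- $\theta_t^2=2Nt$ forces equality a.e.\ in $t$ in the chain of (in)equalities through (\ref{eq:Lapheat}), and the nonnegative Radon measure $N\m-{\bf \Delta}\frac{d_{x_0}^2}{2}$, having zero integral against the strictly positive kernel $p(x_0,\cdot,t)$, must vanish --- is precisely the reasoning hidden behind that sentence. Where you genuinely diverge is the final step. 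The paper, once it has $\Delta\frac{d_{x_0}^2}{2}=N$, invokes the functional characterization of cones (Theorem 6.1 of \cite{GV}, see also Theorem 2.8 of \cite{HP}), which states that this Laplacian identity for $d_{x_0}^2/2$ alone forces $(X,d,\m)$ to be an $N$-metric measure cone; your ``alternative'' ending via the self-improved Bochner inequality and $\Hess u=g$ is essentially what sits inside the proof of that cited theorem. You instead convert the identity, using $\lv \nabla d_{x_0}\rv=1$ $\m$-a.e.\ and a radial test-function/disintegration computation, into the exact volume growth $\m(B_r(x_0))=\frac{c}{N}r^N$ for all $r>0$, i.e.\ the global volume cone condition, and then appeal to ``volume cone implies metric cone''. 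This route works, and the intermediate volume-rigidity statement is a nice elementary byproduct, but two caveats: (i) the theorem you need is De Philippis--Gigli, \emph{From volume cone to metric cone in the nonsmooth setting} (GAFA 2016), which is \emph{not} what \cite{DG} denotes in this paper's bibliography (that label is their non-collapsed-spaces paper), and \cite{Kcone} proves the equivalence between $C(Y)$ being $\RCD(0,N)$ and $Y$ being $\RCD(N-2,N-1)$ --- useful for matching Definition \ref{def:mmcone} after the cone structure is obtained, but not a volume-to-metric-cone statement; (ii) the distributional ODE $(s\,v(s))'=N\,v(s)$ for the radial measure needs a short regularity argument excluding singular parts before you may conclude $v(s)=cs^{N-1}$. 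In the end both proofs outsource the geometric rigidity to one large external theorem; the paper's choice is the shortest path from the identity it has, while yours buys the explicit Bishop--Gromov equality along the way at the cost of that extra measure-theoretic care.
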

 \begin{proof}
  By the proof of Lemma \ref{lem:theta}, we have 
  \begin{align}
   \theta_t^2=2\int_0^t\int_Xp(x_0,y,s)d{\bf \Delta}\frac{d_{x_0}^2}{2}(y)\,ds.\notag
  \end{align}
  We denote the Radon measures by ${\bf \Delta}d_{x_0}^2/2=\mu+\nu$ where $\mu\ll\m$, $\nu$ is the singular part.
  \medskip
  \par \ul{{\bf Claim: $\nu=0$}}
  \begin{proof}[Proof of Claim]
   By the Laplacian comparison theorem, we have $\nu\leq 0$. For any bounded Borel set $A\subset X$ with $\m(A)=0$, it holds  
   \begin{align}
    \theta_t^2&=2\int_0^t\left(\int_Ap(x_0,y,s)d{\bf \Delta}\frac{d_{x_0}^2}{2}(y)+\int_{X\setminus A}p(x_0,y,s)d{\bf \Delta}\frac{d_{x_0}^2}{2}(y)\right)\,ds\notag\\
    &\leq 2\int_0^t\left(\int_Ap(x_0,y,s)d\nu(y)+N\int_{X\setminus A}p(x_0,y,s)d\m(y)\right)\,ds\notag\\
   &\leq 2\int_0^t\int_Ap(x_0,y,s)\,d\nu(y)ds+2N\int_0^t\int_Xp(x_0,y,s)\m(dy)\,ds\notag\\
   &=2\int_0^t\int_Ap(x_0,y,s)\,d\nu(y)ds+2Nt.\notag
   \end{align} 
   Hence by the assumption, we have 
   \begin{align}
    \int_0^t\int_Ap(x_0,y,s)\,d\nu(y)ds\geq 0.\notag
   \end{align} 
   Since $p(x_0,y,s)>0$ for any $y\in A$, $s\in(0,T_0)$ by the heat kernel estimate (\ref{eq:hkestimate}), we have $\nu(A)\geq 0$. By the arbitrariness of $A$, we obtain $\nu\geq0$. We have both $\nu\leq 0$ and $\nu\geq 0$. Thus we conclude $\nu=0$. 
  \end{proof}
  For $t\in(0,T_0)$, suppose there exists a Borel measurable set $A\subset X$ with $\m(A)>0$ such that 
  \begin{align}
   g< N\quad\text{on A}\notag
  \end{align}
  where $g$ is the Radon-Nikodym derivative of ${\bf \Delta}d_{x_0}^2/2$ with respect to $\m$. By the heat kernel estimate (\ref{eq:hkestimate}), $p(x_0,y,t)>0$ for $y\in A$. Thus we have  
  \begin{align}
   \theta_t^2&=2\int_0^t\left(\int_Ap(x_0,y,s)d{\bf \Delta}\frac{d_{x_0}^2}{2}(y)+\int_{X\setminus A}p(x_0,y,s)d{\bf \Delta}\frac{d_{x_0}^2}{2}(y)\right)\,ds\notag\\
   &<2\int_0^t\int_Xp(x_0,y,s)N\m(dy)\,ds\notag\\
   &=2Nt=\theta_t^2\notag
  \end{align}
  by Lemma \ref{lem:thetaexplicit}. This is a contradiction. 
  Hence we obtain  
  \begin{align}
   {\bf \Delta}\frac{d_{x_0}^2}{2}=N\m.\notag
  \end{align}
  This means that $d_{x_0}^2/2\in\calD(\Delta)$ and $\Delta d_{x_0}^2/2=N$. By Theorem 6.1 in \cite{GV} (see also Theorem 2.8 in \cite{HP}), we conclude the metric measure space $(X,d,\m)$ is an $N$-metric measure cone. 
 \end{proof}
 \begin{rem}\label{rem:attainnotheat}
  Let $(X,d,\m)$ be an $\RCD(0,N)$ space, which is not one point. Suppose there exists a probability measure $\nu\in \calP_2(X)$ with $x_0\in \sfb(\nu)$ such that 
  \begin{align}
   \left\{
   \begin{aligned}
   \lim_{t\rightarrow +0}\frac{F(t)}{\sqrt{t}}&=D_0^{-1}>0\notag\\
   U_N(\nu)&=\sqrt{\frac{2D_0^2}{N}\Var(\nu)},
   \end{aligned}\tag{$\spadesuit$}
   \right.
  \end{align}
  for a positive constant $D_0>0$, where 
  \begin{align}
   F(t)&:=\int_0^t\frac{ds}{U_N(\mu_s)},\notag\\
   \mu_t(dy)&:=p(x_0,y,t)\m(dy).\notag
  \end{align}
  Set $t_0:=\Var(\nu)/2N$. Then by a similar calculation in (\ref{eq:thetaW2}), we have 
  \begin{align}
   \sqrt{\frac{2D_0^2}{N}\Var(\nu)}&=U_N(\nu)\leq\frac{1}{F(t_0)}\left(t_0+\frac{1}{2N}\left(\Var(\nu)-W_2^2(\mu_{t_0},\nu)\right)\right)\notag\\
   &\leq \frac{D_0}{\sqrt{t_0}}\left(\frac{\Var(\nu)}{N}-W_2^2(\mu_{t_0},\nu)\right)\notag\\
   &=\sqrt{\frac{2D_0^2}{N}\Var(\nu)}-\sqrt{\frac{2ND_0^2}{\Var(\nu)}}W_2^2(\mu_{t_0},\nu).\notag
  \end{align}
  Hence $\nu=\mu_{t_0}$ and $t_0=\Var(\nu)/2N$. In this case, 
  \begin{align}
   2Nt_0=\Var(\nu)=\Var(\mu_{t_0})\leq W_2^2(\mu_{t_0},\delta_{x_0})\leq 2Nt_0. \notag
  \end{align}
  Therefore $\theta_{t_0}^2=2Nt_0$ and $x_0\in\sfb(\mu_{t_0})$. 
  \smallskip
  \par In general, we have $\theta_t^2\leq 2Nt$. Accordingly by Theorem \ref{thm:shannonRCD}, it holds 
  \begin{align}
   U_N(\mu_t)\leq \sqrt{\frac{2D_0^2}{N}W_2^2(\mu_{t_0},\delta_{x_0})}\leq 2D_0\sqrt{t}.\notag
  \end{align}
  Suppose there is an interval $I\subset [0,t_0]$ with $\calL^1(I)>0$ such that $\theta_t^2<2Nt$ on $I$. Then for any $t\in I$, we have $U_N(\mu_t)<2D_0\sqrt{t}$ and 
  \begin{align}
   F(t_0)=\int_0^{t_0}\frac{ds}{U_N(\mu_s)}>\frac{1}{2D_0}\int_0^{t_0}\frac{ds}{\sqrt{s}}=\frac{\sqrt{t_0}}{D_0}.\notag
  \end{align}
  Hence we obtain 
  \begin{align}
   U_N(\mu_{t_0})&\leq \frac{1}{F(t_0)}\left(t_0+\frac{W_2^2(\mu_{t_0},\delta_{x_0})}{2N}\right)\notag\\
   &<\frac{D_0}{\sqrt{t_0}}\frac{W_2^2(\mu_{t_0},\delta_{x_0})}{N}\notag\\
   &=\sqrt{\frac{2D_0^2}{N}W_2^2(\mu_{t_0},\delta_{x_0})}=U_N(\mu_{t_0}).\notag
  \end{align}
  This is a contradiction. Thus $\theta_t^2=2Nt$ for any $t\in[0,t_0]$. Along the proof of Theorem \ref{thm:conerigid}, we know that $(X,d,\m)$ is an $N$-metric measure cone. 
  \par In Lemma \ref{lem:C0rigid}, we prove $D_0^{-1}=C_0$ under ($\ast$). However in ($\ast$), the relative entropy is described in terms of $W_2^2(\mu_{t_0}^{x_0},\delta_{x_0})$ not by $\Var(\mu_{t_0}^{x_0})$. Hence two conditions are bit different. In fact, ($\spadesuit$) is a stronger condition than ($\ast$). See Appendix A more precisely. 
 \end{rem}
By Lemma \ref{lem:key2}, $C_0(x_0)=(\pi e)^{-1/2}$ at $x_0\in\calR_N$ for non-collapsing $\RCD(0,N)$ spaces. So we have the following corollary.
 \begin{cor}\label{cor:regularcone}
  Let $(X,d,\calH^N)$ be a non-collapsing $\RCD(0,N)$ space. Assume there exists a point $x_0\in\calR_N$ such that 
  \begin{align}
   U_N(\mu_t)=\sqrt{\frac{2\pi e}{N}W_2^2(\mu_t,\delta_{x_0})}\notag
  \end{align}
  holds for all $t>0$, where $\mu_t(dy):=p(x_0,y,t)\m(dy)$. Then $(X,d,\calH^N)=(\R^N,d_E,\calL^N)$. 
 \end{cor}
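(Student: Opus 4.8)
The plan is to recognize the hypothesis as the equality case $(\ast)$ of Assumption~\ref{assump:rigid}, apply the cone rigidity of Theorem~\ref{thm:conerigid}, and then use the scale invariance of a cone at its pole together with the regularity $x_0\in\calR_N^*$ to upgrade the conclusion from ``cone'' to ``$\R^N$''.

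First I would fix the constant. Because $x_0\in\calR_N^*$, Lemma~\ref{lem:key2} gives $C_0(x_0)=(\pi e)^{-1/2}$, and the relation $C_0=1/D_0$ of Lemma~\ref{lem:C0rigid} then singles out $D_0=\sqrt{\pi e}$ as the only possible value. Hence the hypothesis $U_N(\mu_t)=\sqrt{\tfrac{2\pi e}{N}\Var(\mu_t)}$ will agree with $(\ast)$ exactly once I know that $\Var(\mu_t)=\theta_t^2$, where $\theta_t^2:=W_2^2(\mu_t,\delta_{x_0})$; that is, once I know that the source $x_0$ is a barycenter of the heat kernel measure $\mu_t$ for every $t>0$.

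Establishing $x_0\in\sfb(\mu_t)$ is the step I expect to be the main obstacle. Running the $\EVI_{0,N}$ comparison of the flow $(\mu_s)_s$ against $\mu_t$ from the source $x_0$, as in Lemma~\ref{lem:key1}, and inserting the regular value $C_0(x_0)=(\pi e)^{-1/2}$ of Lemma~\ref{lem:key2} together with $\theta_t^2\le 2Nt$ from Lemma~\ref{lem:theta}, produces the centered bound $U_N(\mu_t)\le\sqrt{\tfrac{2\pi e}{N}\theta_t^2}$. The difficulty is that, a priori, only $\Var(\mu_t)\le\theta_t^2$ holds, so this centered bound is consistent with strict inequality and does not by itself force $x_0$ to be a barycenter. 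I would therefore argue $x_0\in\sfb(\mu_t)$ from the symmetry of the heat kernel about its source---this is exactly what happens in the model cones of Remark~\ref{rem:singpt}, where the pole is simultaneously the source and the center of the radially symmetric kernel---and it is here that the argument must be made precise. Once $\Var(\mu_t)=\theta_t^2$ is in hand, the hypothesis is literally $(\ast)$ with $D_0=\sqrt{\pi e}$, and Theorem~\ref{thm:conerigid} yields that $(X,d,\calH^N)$ is an $N$-metric measure cone $C(Y)$ over an $\RCD(N-2,N-1)$ space $Y$, with pole $x_0$; equivalently, the Laplacian comparison is saturated, $\mathbf{\Delta}\tfrac{d_{x_0}^2}{2}=N\calH^N$.

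It remains to identify $C(Y)$ with Euclidean space, and this last step is formal. The dilation $(s,y)\mapsto(\lambda s,y)$ scales $d_{C(Y)}$ by $\lambda$ and $\m_{C(Y)}$ by $\lambda^{-N}$, so for each $r>0$ the rescaled pointed space $(X,r^{-1}d,\m_{x_0,r},x_0)$ is isometric, as a normalized pointed metric measure space, to the cone itself; letting $r\downarrow 0$ shows that the unique element of $\Tan(X,x_0)$ is $(C(Y),d_{C(Y)},\text{(normalized) }\calH^N,x_0)$. Since $x_0\in\calR_N^*\subset\calR_N$, however, $\Tan(X,x_0)=\{(\R^N,d_E,\ul{\calL}^N,o^N)\}$. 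Comparing the two descriptions gives $C(Y)\cong(\R^N,d_E)$, hence $Y\cong S^{N-1}$; as the space is non-collapsing its measure is the Hausdorff measure, and we conclude $(X,d,\calH^N)=(\R^N,d_E,\calL^N)$. Thus the only genuinely delicate point is the barycenter identification $\Var(\mu_t)=\theta_t^2$ in the first step, while the reduction to a cone and the final self-similarity argument are routine.
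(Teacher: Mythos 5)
Your overall skeleton is the one the paper intends: the paper states this corollary without proof, as a direct consequence of Theorem \ref{thm:conerigid}, and your reduction (identify the hypothesis with $(\ast)$ for $D_0=\sqrt{\pi e}$, invoke cone rigidity, then use the dilation self-similarity of the cone at its pole together with $x_0\in\calR_N^*$ to force the tangent cone, and hence the cone itself, to be $(\R^N,d_E,\ul{\calL}^N)$) is exactly that deduction; your final cone-to-Euclidean step is correct and is a genuinely needed part of the argument. You are also right that the only nontrivial point is the bridge from the stated hypothesis, which involves $\Var(\mu_t)$, to Assumption \ref{assump:rigid}, which involves $\theta_t^2=W_2^2(\mu_t,\delta_{x_0})$, since a priori only $\Var(\mu_t)\le\theta_t^2$ holds.

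However, the way you propose to close this gap is unsound. You claim that $x_0\in\sfb(\mu_t)$ should follow from ``the symmetry of the heat kernel about its source,'' and that this is what happens in the model cones of Remark \ref{rem:singpt}. That claim is false: in those very examples the pole is \emph{not} a barycenter of $\mu_t$. Take the half-plane of Remark \ref{rem:singpt}(2): there $p(x_0,y,t)=\frac{1}{2\pi t}e^{-\lv y\rv^2/4t}$ with $x_0=(0,0)$, which is radially symmetric about $x_0$, and $W_2^2(\mu_t,\delta_{x_0})=4t$; but the half-plane is a convex subset of $\R^2$, so the minimizer of $z\mapsto\int d^2(z,y)\,\mu_t(dy)$ is the Euclidean center of mass $(0,2\sqrt{t/\pi})\neq x_0$, whence $\Var(\mu_t)=4t-\tfrac{4}{\pi}t<\theta_t^2$. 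The same displacement of the barycenter into the cone occurs on sectors and on every non-Euclidean $N$-metric measure cone: radial symmetry of the density about $x_0$ does not make $x_0$ a Fr\'echet mean unless the space itself is symmetric about $x_0$. Consequently $x_0\in\sfb(\mu_t)$ is precisely the dividing line between $\R^N$ and all other cones, so establishing it is essentially the entire content of the corollary; it cannot come from symmetry and must be extracted from the equality hypothesis itself (for instance by running the $\EVI_{0,N}$ comparison from an actual barycenter $z_t$, showing $C_0(z_t)\geq(\pi e)^{-1/2}$, and using the equality case plus backward uniqueness of the heat flow to force $z_t=x_0$ --- an argument requiring ingredients beyond those in the paper). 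As written, the decisive step of your proof is missing, and the route you indicate for it would fail; to be fair, the paper itself glosses over the same point by silently treating $\Var(\mu_t)$ and $\theta_t^2$ as interchangeable, so you deserve credit for isolating the difficulty even though your proposal does not resolve it.
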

Since $\vol_g=\calH^n$ for a complete $n$-dimensional Riemannian manifold $(M,g)$, we also have the following corollary. 
\begin{cor}
 Let $(M,d_g,\vol_g)$ be an $n$-dimensional complete Riemannian manifold with the Riemannian volume measure. Assume $\mathrm{Ric}_g\geq 0$ and there exists a point $x_0\in M$ such that 
  \begin{align}
   U_N(\mu_t)=\sqrt{\frac{2\pi e}{N}W_2^2(\mu_t,\delta_{x_0})}\notag
  \end{align}
  holds for all $t>0$, where $\mu_t(dy):=p(x_0,y,t)\vol_g(dy)$. Then $(M,d_g,\vol_g)=(\R^n,d_E,\calL^n)$.
  \end{cor}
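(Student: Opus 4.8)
The plan is to deduce this statement directly from Corollary~\ref{cor:regularcone} by exhibiting $(M,d_g,\vol_g)$ as a non-collapsing $\RCD(0,n)$ space in which the chosen point $x_0$ is automatically a regular point of density one, and then specializing to $N=n$.

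First I would make explicit the structure needed for the hypothesis to be meaningful: in order that $\mu_t=p(x_0,y,t)\vol_g(dy)$ be the $\EVI_{0,n}$-gradient flow of $\Ent_{\vol_g}$, and that $U_N(\mu_t)$ obey the identities established in Section~3, one works within the $\RCD(0,n)$ framework. By Example~\ref{ex:RCD}~(1) with $N=n$ this framework is precisely the condition $\mathrm{Ric}_g\geq 0$ (the borderline case $N=n$ forcing the weight to vanish, so that the reference measure is the unweighted volume). Hence $(M,d_g,\vol_g)$ is an $\RCD(0,n)$ space; and since $\vol_g=\calH^n$ for an $n$-dimensional Riemannian manifold, the reference measure is already the $n$-dimensional Hausdorff measure and $\dim_{\mathsf{ess}}(M,d_g,\vol_g)=n$, so the space is non-collapsing.

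The one genuine point to verify is that every $x_0\in M$ lies in $\calR_n^*$. By smoothness of $g$, normal coordinates at $x_0$ identify the blow-ups $\bigl(M,r^{-1}d_g,\vol_g(B_r(x_0))^{-1}\vol_g,x_0\bigr)$, as $r\downarrow 0$, with the Euclidean tangent space, so that $\Tan(M,x_0)=\{(\R^n,d_E,\ul{\calL}^n,o^n)\}$ and thus $x_0\in\calR_n$. Moreover the standard local volume expansion $\vol_g(B_t(x_0))=\omega_n t^n\bigl(1+O(t^2)\bigr)$ yields $\lim_{t\downarrow 0}\calH^n(B_t(x_0))/(\omega_n t^n)=1$, whence in fact $x_0\in\calR_n^*$. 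With this in hand I would invoke Corollary~\ref{cor:regularcone} with $N=n$: the assumption that $U_N(\mu_t)=\sqrt{(2\pi e/N)\Var(\mu_t)}$ holds for all $t>0$ at the regular point $x_0\in\calR_n^*$ forces $(M,d_g,\vol_g)=(\R^n,d_E,\calL^n)$, which is the claim.

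The entire substance of the argument is contained in the regular-point verification $x_0\in\calR_n^*$, and that is the step I expect to be the main obstacle, even though it is classical Riemannian geometry; once it is granted, the corollary is an immediate specialization of Corollary~\ref{cor:regularcone} via the identity $\vol_g=\calH^n$.
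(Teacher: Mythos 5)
Your proof is correct and takes essentially the same route as the paper: the paper offers no argument beyond the remark that $\vol_g=\calH^n$ for a complete $n$-dimensional Riemannian manifold, treating the corollary as an immediate specialization of Corollary \ref{cor:regularcone}. Your extra steps (identifying the implicit framework as $\mathrm{Ric}_g\geq 0$ with $N=n$, and checking via normal coordinates and the local volume expansion that every point of a smooth manifold lies in $\calR_n^*$) merely make explicit what the paper leaves unstated.
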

 Combining (c), (f), Remark \ref{rem:C0coll}, Corollary \ref{cor:regularcone}, we have the following. 
 \begin{thm}\label{thm:equiv}
  Let $(X,d,\m)$ be an $\RCD(0,N)$ space. Then the followings are all equivalent. 
  \begin{enumerate}
   \item $N\in\bbN$ and $(X,d,\m)=(\R^N,d_E,\calL^N)$, 
   \item For $\m$-a.e. $x_0\in X$, ($\ast$) holds for all $t>0$, 
   \item For all $x_0\in X$, ($\ast$) holds for all $t>0$, 
   \item $(X,d,\m)$ is non-collapsing and Assumption \ref{assump:rigid} holds at $x_0\in \calR_N$ for all $t>0$. 
  \end{enumerate}
 \end{thm}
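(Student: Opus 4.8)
The plan is to close the cycle $(1)\Rightarrow(3)\Rightarrow(2)\Rightarrow(4)\Rightarrow(1)$, which renders all four statements equivalent; most arrows are light, and the weight sits in $(2)\Rightarrow(4)$ together with the cone rigidity already established. For $(1)\Rightarrow(3)$ I would compute directly on $(\R^N,d_E,\calL^N)$: fixing any $x$, the measure $\mu_t:=p^{\R^N}(x,\cdot,t)\calL^N$ is the Gaussian centred at $x$, so that $\theta_t^2=W_2^2(\mu_t,\delta_x)=\int_{\R^N}|y-x|^2p^{\R^N}(x,y,t)\,dy=2Nt$ and, exactly as in the proof of Shannon's inequality on $\R^n$, $U_N(\mu_t)=\sqrt{4\pi e t}$. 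Hence $(\ast)$ holds at $x$ with $D_0=\sqrt{\pi e}$, and since $x$ was arbitrary this yields $(3)$. The implication $(3)\Rightarrow(2)$ is trivial, a property holding at every point holding in particular $\m$-a.e.

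The crucial step is $(2)\Rightarrow(4)$. Assuming $(\ast)$ for $\m$-a.e. $x$, Lemma \ref{lem:C0rigid} gives at each such point $C_0(x)=1/D_0>0$, so $C_0>0$ holds $\m$-a.e. Set $k:=\dim_{ess}(X,d,\m)$. Were $k<N$, the Remark on generic $\RCD$ spaces (the one immediately preceding Remark \ref{rem:singpt}) would force $C_0(x)=0$ for $\m$-a.e. $x$, a contradiction; hence $k=N$. As the essential dimension is an integer, $N=k\in\bbN$, and by fact (f) the space is non-collapsing with $\calH^N(X\setminus\calR_N^*)=0$. Intersecting the full-measure set $\calR_N^*$ with the full-measure set on which $(\ast)$ holds produces a point $x\in\calR_N^*$ satisfying $(\ast)$, which is exactly statement $(4)$.

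For $(4)\Rightarrow(1)$ I would feed $(4)$ into the cone rigidity. Normalising $\m=\calH^N$ in the non-collapsing case, at the given $x\in\calR_N^*$ Lemma \ref{lem:key2} gives $C_0(x)=(\pi e)^{-1/2}$, while Lemma \ref{lem:C0rigid} gives $C_0(x)=1/D_0$; hence $D_0=\sqrt{\pi e}$. By Theorem \ref{thm:conerigid} the space is a cone with pole $x$, so $p(x,\cdot,t)$ is radial about $x$ and $x\in\sfb(\mu_t)$, whence $\theta_t^2=\Var(\mu_t)$. Therefore $(\ast)$ reads $U_N(\mu_t)=\sqrt{(2\pi e/N)\Var(\mu_t)}$, which is precisely the hypothesis of Corollary \ref{cor:regularcone}; the latter yields $(X,d,\calH^N)=(\R^N,d_E,\calL^N)$, that is $(1)$.

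I expect the genuine difficulty to lie in $(2)\Rightarrow(4)$. Theorem \ref{thm:conerigid} converts a single instance of $(\ast)$ only into a cone with pole $x$, and a cone need not be Euclidean; upgrading \emph{cone} to $\R^N$ requires the pole to be a \emph{regular} point, i.e. $x\in\calR_N^*$ with tangent cone $\R^N$, which in turn requires the space to be non-collapsing. The crux is therefore the dichotomy furnished by the infinitesimal heat-kernel asymptotics behind $C_0$: $(\ast)$ forces $C_0>0$, whereas collapse ($k<N$) forces $C_0=0$ almost everywhere, so the a.e.-validity in $(2)$ is exactly what rules out collapse and supplies a regular pole at which $(\ast)$ holds. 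A minor secondary point is the harmless normalisation $\m=\calH^N$ used in $(4)\Rightarrow(1)$, needed so that Corollary \ref{cor:regularcone} applies verbatim and pins down the constant.
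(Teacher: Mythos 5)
Your cycle $(1)\Rightarrow(3)\Rightarrow(2)\Rightarrow(4)\Rightarrow(1)$, and in particular the dichotomy driving $(2)\Rightarrow(4)$ (Lemma \ref{lem:C0rigid} forces $C_0(x)=1/D_0>0$ at $\m$-a.e.\ point, while in the collapsed case $\dime(X,d,\m)=k<N$ the remark on generic $\RCD$ spaces forces $C_0(x)=0$ $\m$-a.e.), is exactly the combination of (c), (f) and the cone rigidity that the paper's one-line proof alludes to; the implications $(1)\Rightarrow(3)\Rightarrow(2)\Rightarrow(4)$ are correct. The gap is in $(4)\Rightarrow(1)$, at the sentence ``$p(x,\cdot,t)$ is radial about $x$ and $x\in\sfb(\mu_t)$, whence $\theta_t^2=\Var(\mu_t)$''. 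Radiality of a measure about the pole of a cone does \emph{not} make the pole a barycenter, because a cone has no symmetry exchanging the inward and outward radial directions. Concretely, take $N=2$ and $Y$ a circle of small radius $\varepsilon$, so $C(Y)$ is a sharp two-dimensional cone. The heat kernel measure $\mu_t$ at the pole is radial with $\theta_t^2=2Nt=4t$, but its mass lies in an almost one-dimensional spike, and the function $z\mapsto\int d^2(z,y)\,\mu_t(dy)$ is minimized at points of radius about $\sqrt{\pi t}$, giving $\Var(\mu_t)\approx(4-\pi)t<\theta_t^2$. So on a general cone the pole is \emph{not} a barycenter of $\mu_t$. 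In your situation $x\in\sfb(\mu_t)$ is in fact true, but only because the cone turns out to be $\R^N$; using it to prove that the cone is $\R^N$ is circular.

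The repair is easy and makes the $\Var$-equality (i.e.\ the literal hypothesis of Corollary \ref{cor:regularcone}) unnecessary: once Theorem \ref{thm:conerigid} gives that $(X,d,\calH^N)$ is an $N$-metric measure cone with pole $x$, use the self-similarity of cones. The dilations $(s,y)\mapsto(\lambda s,y)$ are measure-rescaling homotheties fixing the pole, so $(X,r^{-1}d,\calH^N(B_r(x))^{-1}\calH^N,x)$ is isomorphic to $(X,d,\calH^N(B_1(x))^{-1}\calH^N,x)$ for every $r>0$; hence the unique tangent cone at $x$ is the cone itself. Since $x\in\calR_N^*\subset\calR_N$, that tangent cone is $(\R^N,d_E,\ul{\calL}^N,o^N)$, and therefore $(X,d,\calH^N)=(\R^N,d_E,\calL^N)$. (This self-similarity argument is presumably also the intended, omitted proof of Corollary \ref{cor:regularcone}.) One caveat you half-noticed: with the paper's convention that non-collapsing only gives $\m=c\,\calH^N$, conditions (2)--(4) are invariant under rescaling $\m$ (the free constant $D_0$ in $(\ast)$ absorbs the scaling), while (1) pins down the measure; so your normalization $\m=\calH^N$ is not merely cosmetic but is the same normalization implicitly assumed in the paper's statement.
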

In the last of this section, we give an almost rigidity result. 
 \begin{thm}[Almost rigidity theorem]
  For given $D_0,\varepsilon>0$, $N\in(1,\infty)$, there exists a constant $\delta=\delta(N,D_0,\varepsilon)>0$ such that if an $\RCD(0,N)$ space $(X,d,\m)$ and $x_0\in X$ satisfy $\m(B_1(x_0))=1$ and 
  \begin{align}
   \lv U_N(p(x_0,y,t)\m(dy))-\sqrt{\frac{2D_0^2}{N}W_2^2(p(x_0,y,t)\m(dy),\delta_{x_0})}\rv<\delta,\notag
  \end{align}
  for any $t>0$, then there exists an $\RCD(N-2,N-1)$ space $(Y,d_Y,\m_Y)$ such that  $d_{pmG}((X,d,\m,x_0),C(Y))<\varepsilon$, where $d_{pmG}$ is the pointed measured Gromov distance, and $C(Y)=(C(Y),d_{C(Y)},\m_{C(Y)},O_Y)$ is the $N$-metric measure cone over $Y$.    
 \end{thm}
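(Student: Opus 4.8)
The plan is to argue by a compactness-and-contradiction scheme that reduces the assertion to the exact cone rigidity of Theorem~\ref{thm:conerigid}. Suppose the statement fails for some $D_0,\varepsilon>0$: then there are $\RCD(0,N)$ spaces $(X_n,d_n,\m_n)$ and points $x_n\in X_n$ such that, writing $\mu^n_t(dy):=p_n(x_n,y,t)\,\m_n(dy)$, the defect
\begin{align}
 \delta_n:=\sup_{t>0}\lv U_N(\mu^n_t)-\sqrt{\tfrac{2D_0^2}{N}\Var(\mu^n_t)}\rv\to 0,\notag
\end{align}
while $d_{pmG}\big((X_n,d_n,\m_n,x_n),C(Y)\big)\geq\varepsilon$ for every $\RCD(N-2,N-1)$ space $Y$. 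The goal is to extract a $pmG$-limit which, by the vanishing of the defect, satisfies the exact relation of Assumption~\ref{assump:rigid}, is therefore a cone by Theorem~\ref{thm:conerigid}, and thus contradicts the lower bound on $d_{pmG}$.

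First I would set up the compactness. Since the relation is not invariant under the rescaling $\m\mapsto c\,\m$ (which multiplies $U_N$ by $c^{1/N}$), the fixed constant $D_0$ pins the scale of the measures: combining the small-$t$ asymptotics of $U_N(\mu^n_t)$ with the defect bound forces $\m_n(B_1(x_n))$ to stay in a fixed compact subinterval of $(0,\infty)$, so neither degeneration nor explosion of the reference measure occurs. With this normalization and the uniform Bishop--Gromov control of property~(a), the precompactness recalled in Example~\ref{ex:RCD}(2) yields, along a subsequence,
\begin{align}
 (X_n,d_n,\m_n,x_n)\xrightarrow{pmG}(X_\infty,d_\infty,\m_\infty,x_\infty),\notag
\end{align}
with $(X_\infty,d_\infty,\m_\infty)$ again an $\RCD(0,N)$ space. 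The crucial a priori estimate is the uniform bound $W_2^2(\mu^n_t,\delta_{x_n})\leq 2Nt$ of Lemma~\ref{lem:theta}, which holds on every $X_n$ with the same constant and supplies the uniform integrability of second moments used below.

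Next I would pass to the limit in the defect for each fixed $t>0$. The heat flow is stable under $pmG$-convergence, so $\mu^n_t\to\mu^\infty_t:=p_\infty(x_\infty,y,t)\,\m_\infty(dy)$ weakly and in $W_2$; the continuity of the relative entropy along the heat flow (the mechanism behind \eqref{eq:limitEnt}, cf.\ Theorem~7.7 in \cite{GMS}) gives $U_N(\mu^n_t)\to U_N(\mu^\infty_t)$, while the uniform second-moment bound upgrades the convergence to $\Var(\mu^n_t)\to\Var(\mu^\infty_t)$. Letting $n\to\infty$ therefore produces the exact identity
\begin{align}
 U_N(\mu^\infty_t)=\sqrt{\tfrac{2D_0^2}{N}\Var(\mu^\infty_t)}\qquad\text{for all }t>0.\notag
\end{align}
This is precisely the equality case governed by Section~4: combined with the sandwich $\Var(\mu^\infty_t)\leq W_2^2(\mu^\infty_t,\delta_{x_\infty})\leq 2Nt$ of Lemma~\ref{lem:theta} it places $(X_\infty,d_\infty,\m_\infty)$ in the setting of Assumption~\ref{assump:rigid} at $x_\infty$, so Theorem~\ref{thm:conerigid} identifies the limit as an $N$-metric measure cone $C(Y_\infty)$ over some $\RCD(N-2,N-1)$ space $Y_\infty$. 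Then $d_{pmG}\big((X_n,d_n,\m_n,x_n),C(Y_\infty)\big)\to 0$, contradicting the choice of the sequence with $Y=Y_\infty$.

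The essential difficulty is twofold. Analytically, one must certify that neither entropy nor second moment escapes in the $pmG$-limit: the relative entropy is only lower semicontinuous under measured convergence, so the genuine continuity $U_N(\mu^n_t)\to U_N(\mu^\infty_t)$ and $\Var(\mu^n_t)\to\Var(\mu^\infty_t)$ rests on the smoothing of the heat flow, on the uniform tightness from Lemma~\ref{lem:theta} that keeps approximate barycenters from escaping to infinity, and on the two-sided control of $\m_n(B_1(x_n))$ that keeps $\m_\infty$ non-degenerate. Structurally, one must verify that the limiting $\Var$-identity really reduces to Assumption~\ref{assump:rigid}, equivalently that it forces ${\bf \Delta}\frac{d_{x_\infty}^2}{2}=N\m_\infty$; this is the borderline analysis of the equality case, in which the gap between $\Var(\mu^\infty_t)$ and $W_2^2(\mu^\infty_t,\delta_{x_\infty})$ must be shown not to obstruct the cone characterization of Theorem~\ref{thm:conerigid} (see also Corollary~\ref{cor:regularcone}). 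Securing these two points is where the real work lies; once the limit is known to be a cone, the contradiction is immediate.
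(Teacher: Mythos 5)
Your plan is correct and coincides with the paper's own argument: the paper proves the theorem by exactly this contradiction--compactness scheme, extracting a $pmG$-limit from the precompactness of $\RCD(0,N)$ spaces, passing the defect to the limit via pointwise heat-kernel convergence (\cite{AHT}*{Theorem 3.3}) together with convergence of entropy and variance (\cite{GMS}*{Theorem 7.7}), and then invoking Theorem \ref{thm:conerigid} to identify the limit as a cone and derive the contradiction. The extra points you flag as "the real work" --- the two-sided control of $\m_n(B_1(x_n))$ and the gap between $\Var(\mu_t^\infty)$ and $W_2^2(\mu_t^\infty,\delta_{x_\infty})$ when reducing to Assumption \ref{assump:rigid} --- are precisely the details the paper leaves implicit in those citations.
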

 \begin{proof}
  Suppose there exists a positive constant $\varepsilon>0$, a sequence of $\RCD(0,N)$ spaces $\{(X_n,d_n,\m_n)\}_n$, and $x_n\in X_n$ with $\m_n(B_1(x_n))=1$ such that it hold
  \begin{align}
   \lv U_N(p^{X_n}(x_n,y,t)\m_n(dy))-\sqrt{\frac{2D_0^2}{N}W_2^2(p^{X_n}(x_n,y,t)\m_n(dy),\delta_{x_n})}\rv<\frac{1}{n}\label{eq:almostcontr}
  \end{align}
  and the $d_{pmG}$-distance between those spaces and any $N$-metric measure cone away from at least $\varepsilon$. Since $\{(X_n,d_n,\m_n,x_n)\}_n$ is precompact in $d_{pmG}$ topology, we have a convergent subsequence $\{(X_{n_j},d_{n_j},\m_{n_j},x_{n_j})\}_j$ such that $(X_{n_j},d_{n_j},\m_{n_j},x_{n_j})\xrightarrow{pmG}(X,d,\m,x)$. By \cite{AHT}*{Theorem 3.3}, we have the pointwise convergence $p^{X_{n_j}}(x_{n_j},y_{n_j},t)\rightarrow p^X(x,y,t)$ whenever $(x_n,y_n,t)\rightarrow (x,y,t)$. Combining the above result and \cite{GMS}*{Theorem 7.7}, we obtain 
  \begin{align}
   &W_2^2(p^{X_{n_j}}(x_{n_j},y,t)\m_{n_j}(dy),\delta_{x_{n_j}})\rightarrow W_2^2(p^X(x,y,t)\m(dy),\delta_x),\notag\\
   &U_N(p^{X_{n_j}}(x_{n_j},y,t)\m_{n_j}(dy))\rightarrow U_N(p^{X}(x,y,t)\m(dy))\notag
  \end{align} 
  as $j\rightarrow \infty$. By (\ref{eq:almostcontr}) and Theorem \ref{thm:conerigid}, $(X,d,\m,x)$ is an $N$-metric measure cone. It implies $d_{pmG}((X_{n_j},d_{n_j},\m_{n_j},x_{n_j}),(X,d,\m,x))\geq \varepsilon$. This is a contradiction. 
 \end{proof}
%
%
\section{Uncertainty principle}
In this section, we give a version of Heisenberg-Pauli-Weyl uncertainty principle inequality on $\RCD(0,N)$ spaces. Firstly, we recall the following result given in \cite{BKT}. 
\medskip
\par Let $(X,d,\m)$ be an $\RCD(0,N)$ space. We define 
\begin{align}
 \AVR_{\m}:=\lim_{r\rightarrow \infty}\frac{\m(B_r(x))}{\omega_N r^N},\notag
\end{align}
which takes the value in $[0,\infty)$ and independent of the choice of $x\in X$. Finiteness is guaranteed by the Bishop-Gromov inequality. In \cite{BKT}, the authors give the following result. 
 \begin{thm}[\cite{BKT}]\label{thm:ElogSob}
  Assume $\AVR_{\m}>0$. Then for any $u\in W^{1,2}(X,d,\m)$ with $\lV u\rV_2^2=1$, we have 
  \begin{align}
   \int_X\lv u\rv^2\log\lv u\rv^2\,d\m\leq \frac{N}{2}\log\left(\frac{2}{\pi e N}\AVR_{\m}^{-2/N}\int_X\lv \nabla u\rv^2\,d\m\right).\label{eq:ElogSob}
  \end{align}
 \end{thm}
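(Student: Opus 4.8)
The plan is to run the heat flow from $u^2$ and to exploit the sharp, dimension-dependent dissipation of the Fisher information. First I would reduce to the case $u\geq 0$ with $\int_X u^2\,d\m=1$ and $\int_X\lv\nabla u\rv^2\,d\m<\infty$: replacing $u$ by $\lv u\rv$ changes neither side since $\lv\nabla\lv u\rv\rv=\lv\nabla u\rv$ $\m$-a.e., and if the Dirichlet energy is infinite the right-hand side is $+\infty$. Set $\nu:=u^2\m$, $\rho_t:=T_t(u^2)$, and $\nu_t:=\rho_t\m$, and write $I_{\m}(\rho):=\int_X\frac{\lv\nabla\rho\rv^2}{\rho}\,d\m$ for the Fisher information. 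The chain rule for minimal weak gradients gives $I_{\m}(u^2)=4\int_X\lv\nabla u\rv^2\,d\m$, so the assertion is equivalent to the lower bound
\begin{align}
 U_N(\nu)^2\,I_{\m}(u^2)\geq 2\pi e\,N\,\AVR_{\m}^{2/N}. \notag
\end{align}

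Next I would introduce $h(t):=U_N(\nu_t)^2=\exp\left(-\frac{2}{N}\Ent_{\m}(\nu_t)\right)$ on $(0,\infty)$. By de Bruijn's identity $\frac{d}{dt}\Ent_{\m}(\nu_t)=-I_{\m}(\rho_t)$, and therefore
\begin{align}
 h'(t)=\frac{2}{N}\,I_{\m}(\rho_t)\,h(t). \notag
\end{align}
The crux of the argument is the claim that $h$ is concave on $(0,\infty)$. Formally, concavity is equivalent to the sharp dissipation estimate $\frac{d}{dt}I_{\m}(\rho_t)\leq-\frac{2}{N}I_{\m}(\rho_t)^2$, equivalently to $\Ent_{\m}(\nu_\cdot)''\geq\frac{2}{N}\bigl(\Ent_{\m}(\nu_\cdot)'\bigr)^2$ along the flow. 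In the smooth case this is the Bochner identity $-\frac{d}{dt}I_{\m}=2\int_X\Gamma_2(\log\rho_t)\,\rho_t\,d\m$ combined with the $\RCD(0,N)$ inequality $\Gamma_2(\log\rho_t)\geq\frac{1}{N}(\Delta\log\rho_t)^2$ and the Cauchy--Schwarz bound $\int_X(\Delta\log\rho_t)^2\rho_t\,d\m\geq I_{\m}(\rho_t)^2$, where the last step uses $\int_X\Delta\log\rho_t\cdot\rho_t\,d\m=-I_{\m}(\rho_t)$. I expect this concavity to be the main obstacle: one must run the second-order $\Gamma_2$-calculus on $\log\rho_t$, which is only $W^{1,2}_{loc}$ a priori, and justify differentiating the Fisher information. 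I would handle this through the self-improved Bakry--Émery estimates and the regularization (test-plan) machinery available on $\RCD(0,N)$ spaces, exactly as in the derivation of the dimensional entropy-dissipation inequalities.

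Granting concavity, one has $h(t)\leq h(0)+h'(0)\,t$, hence $\frac{h(t)}{t}\leq\frac{h(0)}{t}+h'(0)$ for every $t>0$. The role of the hypothesis $\AVR_{\m}>0$ is to produce the sharp asymptotic slope: this is the large-time analogue of (\ref{eq:limitEnt}). From the large-time heat-kernel asymptotics $\lim_{t\to\infty}(4\pi t)^{N/2}p(x,y,t)=\AVR_{\m}^{-1}$, valid on $\RCD(0,N)$ spaces of Euclidean volume growth (where the tangent cone at infinity is a metric measure cone), one obtains $\Ent_{\m}(\nu_t)=-\frac{N}{2}\log(4\pi e t)-\log\AVR_{\m}+o(1)$ as $t\to\infty$, independently of the initial datum, so that
\begin{align}
 \lim_{t\to\infty}\frac{h(t)}{t}=4\pi e\,\AVR_{\m}^{2/N}. \notag
\end{align}
Letting $t\to\infty$ in the preceding display yields $4\pi e\,\AVR_{\m}^{2/N}\leq h'(0)=\frac{2}{N}\,I_{\m}(u^2)\,U_N(\nu)^2$, which is precisely the lower bound isolated in the first paragraph. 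Substituting $I_{\m}(u^2)=4\int_X\lv\nabla u\rv^2\,d\m$ and rearranging gives $\Ent_{\m}(\nu)\leq\frac{N}{2}\log\left(\frac{2}{\pi e N}\AVR_{\m}^{-2/N}\int_X\lv\nabla u\rv^2\,d\m\right)$, as claimed.

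A secondary technical point, to be dispatched by standard approximation, is the justification of de Bruijn's identity and of the boundary behaviour $h(0^+)=U_N(\nu)^2$ and $h'(0^+)=\frac{2}{N}I_{\m}(u^2)U_N(\nu)^2$ for the (possibly merely $L^1$) initial density $u^2$; here the lower semicontinuity of the Fisher information and the continuity of the relative entropy along the heat flow as $t\downarrow 0$ suffice. I note that the mere concavity of $U_N(\nu_t)$ supplied by fact (g) is too weak for this argument: it corresponds only to $I_{\m}'\leq-\frac{1}{N}I_{\m}^2$, whereas the sharp constant forces the stronger rate $I_{\m}'\leq-\frac{2}{N}I_{\m}^2$, which saturates on the Gaussian model and is exactly what the $N$-Bochner inequality provides.
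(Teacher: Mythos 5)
You should first note what the paper actually does here: it gives \emph{no} proof of this theorem. The statement is imported verbatim from \cite{BKT}, where it is proved on general $\CD(0,N)$ spaces with $\AVR_{\m}>0$ (the paper itself remarks that \cite{BKT} works in greater generality). On a $\CD(0,N)$ space the heat flow need not be linear and no Bochner/$\Gamma_2$ inequality is available, so the argument of \cite{BKT} is necessarily non-parabolic (it runs through sharp isoperimetric/Sobolev-type inequalities under the Euclidean volume growth hypothesis). Your proposal is therefore a genuinely different, $\RCD$-specific route: the Costa entropy-power-concavity argument, i.e.\ concavity of $h(t)=U_N(\nu_t)^2$ along the heat flow plus identification of the asymptotic slope through $\AVR_{\m}$. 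The algebra is right: the chain rule $I_{\m}(u^2)=4\int_X\lv\nabla u\rv^2\,d\m$, the equivalence of the statement with $U_N(\nu)^2I_{\m}(u^2)\geq 2\pi eN\AVR_{\m}^{2/N}$, the tangent-line bound, and the slope value $4\pi e\AVR_{\m}^{2/N}$ reproducing exactly the constant $\tfrac{2}{\pi eN}$ all check out; your observation that fact (g) (concavity of $U_N$, giving only $I'\leq-\tfrac{1}{N}I^2$) is too weak and that one needs concavity of $U_N^2$ is also correct and well taken.

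However, both pillars of your argument are asserted rather than proven, and each is a theorem-sized task. (i) Entropy-power concavity of $h$ on an $\RCD(0,N)$ space: the formal Bochner--Cauchy--Schwarz chain is classical, but making it rigorous requires $\Gamma_2$-calculus on $\log\rho_t$, twice differentiating the Fisher information, and justifying all integrations by parts on a non-smooth space --- comparable in difficulty to the Li--Yau inequality of Garofalo--Mondino or the $W$-entropy monotonicity of Kuwada--Li, which are the nearest rigorous statements. Note also that Li--Yau itself (i.e.\ $I_{\m}(\rho_t)\leq N/2t$) only yields monotonicity of $h(t)/t$, which is strictly weaker than the concavity you need, so there is no shortcut through known $\RCD$ estimates. (ii) The large-time input is doubly unproven: the asymptotics $(4\pi t)^{N/2}p(x,y,t)\rightarrow\AVR_{\m}^{-1}$ is the $\RCD$ analogue of P.~Li's theorem and needs an argument (blow-downs to asymptotic cones plus the heat-kernel convergence of \cite{AHT}, with uniqueness of the limiting value); and even granting it, the claim $\Ent_{\m}(\nu_t)=-\tfrac{N}{2}\log(4\pi et)-\log\AVR_{\m}+o(1)$ ``independently of the initial datum'' is a genuine leap, since entropy is not continuous under pointwise convergence of densities and your $\nu$ has only finite entropy, Fisher information and second moment. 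A workable patch is Jensen's inequality $\Ent_{\m}(\nu_t)\leq\int_X\Ent_{\m}(p(z,\cdot,t)\m)\,\nu(dz)$, reducing the required upper bound to heat-kernel entropies, together with locally uniform convergence in $z$ and Gaussian tail bounds; as written, though, this step is missing. Finally, a small but real slip: at $t=0$ you need $\limsup_{s\downarrow0}I_{\m}(\rho_s)\leq I_{\m}(u^2)$, and the lower semicontinuity of the Fisher information that you invoke points the wrong way; what rescues the step is the monotonicity of the Fisher information (the descending slope) along the gradient flow.
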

We have to remark that the authors in \cite{BKT} give the same results for more general settings. Here we just give the restricted form that we will use. 
\par The above result can be written in the different form used by the word of probability measures; Let $(X,d,\m)$ be an $\RCD(0,N)$ space with $\AVR_{\m}>0$ and $\nu\in \calD(\Ent_{\m})$ such that $\nu=\rho\m\ll\m$ and $\sqrt{\rho}\in \calD(\Ch)$. Then 
\begin{align}
 \Ent_{\m}(\nu)\leq \frac{N}{2}\log\left(\frac{1}{2\pi e N}\AVR_{\m}^{-2/N}I_{\m}(\nu)\right)\label{eq:ElogSobmeas}
\end{align} 
holds, where $I_{\m}(\nu)$ is the Fisher information of $\nu$ defined by 
\begin{align}
 I_{\m}(\nu):=4\int_{\{\rho>0\}}\lv \nabla \sqrt{\rho}\rv^2\,d\m=\int_{\{\rho>0\}} \frac{\lv\nabla\rho\rv^2}{\rho}\,d\m.\notag
\end{align}
Combining Theorem \ref{thm:ElogSob} and Theorem \ref{thm:shannonRCD} leads the following theorem. Note that the proof is similar those to \cite{Suguro,OS}.  
 \begin{thm}[Uncertainty principle on $\RCD(0,N)$ spaces]\label{thm:UPRCD}
  Let $(X,d,\calH^N)$ be a non-collapsing $\RCD(0,N)$ space. Assume $\AVR_{\calH^N}>0$ and $\nu\in\calD(\Ent_{\calH^N})\cap\calD(I_{\calH^N})$ such that $\sfb(\nu)\cap\calR_N\neq\emptyset$. Then 
  \begin{align}
   I_{\calH^N}(\nu)^{1/2}\Var(\nu)^{1/2}\geq N\AVR_{\calH^N}^{1/N}\label{eq:UPRCD}
  \end{align}
  holds. 
 \end{thm}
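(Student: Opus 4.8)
The plan is to sandwich the relative entropy $\Ent_{\calH^N}(\nu)$ between two estimates and then eliminate it. From below, Shannon's inequality (Theorem \ref{thm:shannonRCD}) controls $\Ent_{\calH^N}(\nu)$ by the variance $\Var(\nu)$; from above, the entropy--energy inequality (\ref{eq:ElogSobmeas}) derived from Theorem \ref{thm:ElogSob} controls it by the Fisher information $I_{\calH^N}(\nu)$. Chaining the two bounds and cancelling the logarithms will give (\ref{eq:UPRCD}) directly. This mirrors the classical Euclidean derivation of the uncertainty principle from log-Sobolev-type inequalities, as in \cite{Suguro,OS}.

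First I would recast Shannon's inequality. Since $U_N(\nu)=\exp(-\tfrac1N\Ent_{\calH^N}(\nu))$, taking logarithms in (\ref{eq:shannonnoncol}) yields
\[
\Ent_{\calH^N}(\nu)\geq -\frac{N}{2}\log\left(\frac{2\pi e}{N}\Var(\nu)\right).
\]
On the other side, the hypothesis $\nu\in\calD(I_{\calH^N})$ forces $\nu=\rho\calH^N$ with $\sqrt{\rho}\in\calD(\Ch)$, so (\ref{eq:ElogSobmeas}) applies and gives
\[
\Ent_{\calH^N}(\nu)\leq \frac{N}{2}\log\left(\frac{1}{2\pi e N}\AVR_{\calH^N}^{-2/N}I_{\calH^N}(\nu)\right).
\]
Concatenating the two and dividing out the common factor $N/2>0$, then exponentiating, produces
\[
\frac{N}{2\pi e\,\Var(\nu)}\leq \frac{1}{2\pi e N}\AVR_{\calH^N}^{-2/N}I_{\calH^N}(\nu),
\]
which rearranges to $N^2\AVR_{\calH^N}^{2/N}\leq \Var(\nu)\,I_{\calH^N}(\nu)$, and taking square roots gives (\ref{eq:UPRCD}).

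Once both input inequalities are available the argument is pure bookkeeping, so there is no deep obstacle; the one point demanding care is confirming that the hypotheses of the two results hold \emph{simultaneously} for the same $\nu$. Theorem \ref{thm:shannonRCD} requires $\nu\in\calD(\Ent_{\calH^N})$ together with $\sfb(\nu)\cap\calR^*_N\neq\emptyset$, both of which are assumed, while the entropy--energy inequality requires finite Fisher information, i.e.\ $\nu\in\calD(I_{\calH^N})$, which is precisely the additional standing hypothesis (and in particular guarantees the $\sqrt{\rho}\in\calD(\Ch)$ regularity needed to invoke (\ref{eq:ElogSobmeas})). Since these conditions dovetail, both bounds are legitimate for the chosen $\nu$ and the chain closes. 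I would also note that $\AVR_{\calH^N}>0$ is exactly what makes the upper bound nontrivial, so the two curvature-dimension inputs enter symmetrically.
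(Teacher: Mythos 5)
Your proposal is correct and follows exactly the paper's own argument: the paper likewise sandwiches $\Ent_{\calH^N}(\nu)$ between the lower bound from Shannon's inequality (\ref{eq:shannonnoncol}) and the upper bound from the entropy--energy inequality (\ref{eq:ElogSobmeas}), then cancels the logarithms to obtain $1\leq \frac{1}{N^2}\AVR_{\calH^N}^{-2/N}\Var(\nu)I_{\calH^N}(\nu)$, which is (\ref{eq:UPRCD}) after taking square roots. Your added remark on verifying that the hypotheses of both input inequalities hold simultaneously for the same $\nu$ is a sound (if implicit in the paper) point of care, not a deviation.
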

 \begin{proof}
  By (\ref{eq:ElogSobmeas}) and (\ref{eq:shannonnoncol}), we obtain 
  \begin{align}
   &-\frac{N}{2}\log\left(\frac{2\pi e}{N}\Var(\nu)\right)\leq \Ent_{\calH^N}(\nu)\leq \frac{N}{2}\log\left(\frac{1}{2\pi e N}\AVR_{\calH^N}^{-2/N}I_{\calH^N}(\nu)\right)\notag\\
   &\Rightarrow 0\leq \frac{N}{2}\log \left(\frac{1}{N^2}\AVR_{\calH^N}^{-2/N}\Var(\nu)I_{\calH^N}(\nu)\right)\notag\\
   &\Rightarrow 1\leq \frac{1}{N^2}\AVR_{\calH^N}^{-2/N}\Var(\nu)I_{\calH^N}(\nu).\notag
  \end{align}
 \end{proof}
Note that for every non-collapsing $(X,d,\beta\m)$ with $\AVR_{\beta\m}>0$, (\ref{eq:ElogSobmeas}) also holds; 
\begin{align}
 \Ent_{\beta\m}(\nu)=\Ent_{\m}(\nu)-\log\beta&\leq \frac{N}{2}\log\left(\frac{1}{2\pi e N}\AVR_{\m}^{-2/N}I_{\m}(\nu)\right)-\log\beta\notag\\
&= \frac{N}{2}\log\left(\frac{1}{2\pi e N}\AVR_{\beta\m}^{-2/N}I_{\beta\m}(\nu)\right).\notag
\end{align}
 \begin{cor}
  Let $(X,d,\beta\calH^N)$ be a non-collapsing $\RCD(0,N)$ space. Assume $\AVR_{\beta\calH^N}>0$ and $\nu\in\calD(\Ent_{\beta\calH^N})\cap \calD(I_{\beta\calH^N})$ such that $\sfb(\nu)\cap \calR_N\neq\emptyset$. Then 
  \begin{align}
   I_{\beta\calH^N}(\nu)^{1/2}\Var(\nu)^{1/2}\geq N\AVR_{\beta\calH^N}^{1/N}\beta^{-1/N}=N\AVR_{\calH^N}^{1/N}\notag
  \end{align}
  holds. 
 \end{cor}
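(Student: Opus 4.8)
The plan is to follow the proof of Theorem~\ref{thm:UPRCD} almost verbatim, replacing the two input inequalities by their $\beta$-weighted counterparts, both of which are already available in the text; the only genuine work is bookkeeping the powers of $\beta$ and checking that they cancel against the relation between $\AVR_{\beta\calH^N}$ and $\AVR_{\calH^N}$.

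First I would record the weighted Shannon bound. Since $(X,d,\beta\calH^N)$ is non-collapsing $\RCD(0,N)$ and $\sfb(\nu)\cap\calR_N^*\neq\emptyset$, Corollary~\ref{cor:weightedShannon} gives $U_N(\nu)\leq \beta^{1/N}\sqrt{\frac{2\pi e}{N}\Var(\nu)}$. Recalling that $U_N(\nu)=\exp(-\frac1N\Ent_{\beta\calH^N}(\nu))$, taking logarithms and multiplying by $-N$ turns this into a lower bound on the entropy,
\begin{align}
  \Ent_{\beta\calH^N}(\nu)\geq -\log\beta-\frac{N}{2}\log\left(\frac{2\pi e}{N}\Var(\nu)\right).\notag
\end{align}

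Next I would invoke the weighted entropy--information inequality displayed in the paragraph immediately preceding the Corollary (the $\beta$-version of~(\ref{eq:ElogSobmeas})), which under the standing hypotheses $\AVR_{\beta\calH^N}>0$ and $\nu\in\calD(I_{\beta\calH^N})$ reads
\begin{align}
  \Ent_{\beta\calH^N}(\nu)\leq \frac{N}{2}\log\left(\frac{1}{2\pi e N}\AVR_{\beta\calH^N}^{-2/N}I_{\beta\calH^N}(\nu)\right).\notag
\end{align}
Chaining the two displays eliminates $\Ent_{\beta\calH^N}(\nu)$, and combining the two logarithmic terms and exponentiating yields
\begin{align}
  \beta^{-2/N}\leq \frac{1}{N^2}\AVR_{\beta\calH^N}^{-2/N}\Var(\nu)\,I_{\beta\calH^N}(\nu),\notag
\end{align}
that is, $I_{\beta\calH^N}(\nu)\,\Var(\nu)\geq N^2\AVR_{\beta\calH^N}^{2/N}\beta^{-2/N}$; taking square roots gives the stated inequality.

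Finally, for the second equality in the conclusion I would note that $\AVR_{\beta\calH^N}=\beta\,\AVR_{\calH^N}$, which is immediate from the definition of the asymptotic volume ratio since the scalar $\beta$ factors out of $\beta\calH^N(B_r(x))$; hence $\AVR_{\beta\calH^N}^{1/N}\beta^{-1/N}=\AVR_{\calH^N}^{1/N}$. I do not expect any real obstacle here: the argument is a mechanical adaptation of Theorem~\ref{thm:UPRCD}, and the only point requiring a little care is verifying that the $\beta^{\pm1/N}$ factors coming from the weighted Shannon bound, the weighted log-Sobolev bound, and the scaling of $\AVR$ combine correctly.
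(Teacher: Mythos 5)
Your proposal is correct and follows exactly the paper's route: the paper's proof simply says to combine Corollary \ref{cor:weightedShannon} with the argument of Theorem \ref{thm:UPRCD} (using the $\beta$-weighted form of (\ref{eq:ElogSobmeas}) displayed just before the corollary) and to invoke the identity $\AVR_{\beta\calH^N}=\beta\AVR_{\calH^N}$, which is precisely what you do. Your version merely spells out the bookkeeping of the $\beta^{\pm 1/N}$ factors that the paper leaves implicit, and that bookkeeping is accurate.
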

 \begin{proof}
  This is a direct consequence from combining Corollary \ref{cor:weightedShannon} and the proof of Theorem \ref{thm:UPRCD}. The last equality comes from the identity $\AVR_{\beta\calH^N}=\beta\AVR_{\calH^N}$. 
 \end{proof}
\begin{rem}\label{rem:asympsharp}
 The inequality (\ref{eq:UPRCD}) is asymptotically sharp for a fixed space in the following sense; Let $(X,d,\calH^N)$ be a non-collapsing $\RCD(0,N)$ space with $\AVR_{\calH^N}>0$ and $x\in\calR_N$. For $r>0$, the scaled metric measure spaces 
 \begin{align}
  \{(X,d_r,(\AVR_{\calH^N})^{-1}\calH^{N,d_r})\}_{r>0}\notag
 \end{align}
 are non-collapsing $\RCD(0,N)$ spaces with $\AVR_{(X,d_r,(\AVR_{\calH^N})^{-1}\calH^{N,d_r})}=1$, here $\calH^{N,d_r}$ denotes the Hausdorff measure relative to the scaled distance function $d_r$. Thus there exists a subsequence $r_j\rightarrow \infty$ such that 
 \begin{align}
  \left(X,d_{r_j},(\AVR_{\calH^N})^{-1}\calH^{N,d_{r_j}},x\right)\xrightarrow{pmG} (Y,d_Y,\m_Y,y),\notag
 \end{align}
 which is called an asymptotic cone of $(X,d,\m,x)$. By the construction and \cite{DG}, $(Y,d_Y,\m_Y)$ is an $N$-metric measure cone with the pole $y$ and with $\AVR_{\m_Y}=1$ (see \cite{DG,KBishop,HP,Kcone}). By (3) of Remark \ref{rem:singpt}, we know the form of the heat kernel $p^{Y}$. By a simple calculation, $p^{Y}(y,z,t)\m_Y(dz)$ achieves the equality. 
 \par Hence the left-hand side of (\ref{eq:UPRCD}) approaches to the right-hand side if $\nu_j:=p(x,z,r_j^{-2}t)\calH^{N,d}(dz)$. 
\end{rem}
 \begin{rem}
  In Remark \ref{rem:asympsharp}, we see (\ref{eq:UPRCD}) is sharp for $N$-cone metric measure spaces. On the other hand, Han and Xu prove the rigidity theorem of uncertainty principle under $\mathsf{MCP}(0,N)$ condition, that is, under $\mathsf{MCP}(0,N)$ and essentially non-branching, (\ref{eq:UPRCD}) holds if and only if $(X,d,\m)$ is a volume cone. See \cite{HanX} for precise statement. It is worth pointing out that volume cone is metric cone under $\RCD(0,N)$ condition \cite{DGcone}.
 \end{rem}
Finally we give a version of uncertainty principle on $\RCD(K,N)$ space for $K>0$. The following is already known. 
\begin{thm}[$N$-Log Sobolev inequality (\cite{EKS}*{Corollary 3.29}, \cite{BGL}*{Theorem 6.8.1})]
  Let $(X,d,\m)$ be an $\RCD(K,N)$ space for $K>0$. Then for any $\nu\in\calP_2(X)$ 
  \begin{align}
   KN\left[\exp\left(\frac{2}{N}\Ent_{\m}(\nu)\right)-1\right]\leq I_{\m}(\nu).\label{eq:NlogSob}
  \end{align}
 \end{thm}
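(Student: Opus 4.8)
The plan is to run the heat flow and track the entropy against the Fisher information, in the spirit of the dimensional Bakry--\'Emery $\Gamma_2$-calculus. Since $K>0$ forces $(X,d,\m)$ to be compact with finite total mass (generalized Bonnet--Myers for $\CD(K,N)$), we may normalize $\m$ to a probability measure and write $T_t$ for the associated heat semigroup. Given $\nu=f\m\in\calP_2(X)$, set $u_t:=T_tf$, $\nu_t:=u_t\m$, and
\begin{align}
 H(t):=\Ent_{\m}(\nu_t),\qquad I(t):=I_{\m}(\nu_t).\notag
\end{align}
By a density and truncation argument it is enough to prove the inequality for strictly positive, sufficiently regular $f$, so that all the differentiations and integrations by parts below are legitimate within the $\RCD$ heat-flow calculus (recall by fact (e) that $\nu_t$ is the $\EVI_{K,N}$-gradient flow of $\Ent_{\m}$ started at $\nu$); the general case then follows by approximation and lower semicontinuity of both sides of (\ref{eq:NlogSob}).

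First I would record the two relations driving the argument. Differentiating under the integral and using $\del_t u_t=\Delta u_t$ together with $\int_X\Delta u_t\,d\m=0$ gives the de Bruijn identity $H'(t)=-I(t)$. Next, the dimensional Bochner inequality $\Gamma_2(\phi)\geq K\,|\nabla\phi|^2+\tfrac1N(\Delta\phi)^2$, which is the infinitesimal form of $\RCD(K,N)$, applied to $\phi=\log u_t$ and integrated against the probability measure $u_t\,\m$, yields $I'(t)\leq -2K\,I(t)-\tfrac{2}{N}\int_X u_t(\Delta\log u_t)^2\,d\m$. Using $\Delta\log u_t=\Delta u_t/u_t-|\nabla\log u_t|^2$ one computes $\int_X u_t\,\Delta\log u_t\,d\m=-I(t)$, and then Jensen's inequality for the probability measure $u_t\,\m$ upgrades the bound to the master inequality
\begin{align}
 I'(t)\leq -2K\,I(t)-\frac{2}{N}\,I(t)^2.\label{eq:master}
\end{align}

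The heart of the proof is a monotone quantity. I would set
\begin{align}
 A(t):=\bigl(I(t)+KN\bigr)\exp\Bigl(-\tfrac{2}{N}H(t)\Bigr),\notag
\end{align}
and compute, using $H'=-I$ and then (\ref{eq:master}),
\begin{align}
 A'(t)=\Bigl(I'(t)+\tfrac{2}{N}I(t)^2+2K\,I(t)\Bigr)\exp\Bigl(-\tfrac{2}{N}H(t)\Bigr)\leq 0.\notag
\end{align}
Thus $A$ is non-increasing, so $A(0)\geq\lim_{t\to\infty}A(t)$. Because $K>0$ provides a spectral gap, the semigroup is ergodic with $u_t\to1$, hence $H(t)\to0$ and $I(t)\to0$, and therefore $\lim_{t\to\infty}A(t)=KN$. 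The inequality $A(0)\geq KN$ reads $I_{\m}(\nu)+KN\geq KN\exp(\tfrac{2}{N}\Ent_{\m}(\nu))$, which is precisely (\ref{eq:NlogSob}).

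I expect the genuine difficulty to lie in justifying (\ref{eq:master}) in the non-smooth setting: one must interpret $\Gamma_2$ and $\Delta\log u_t$ through the measure-valued Laplacian and invoke the self-improving Bochner inequality for $\RCD(K,N)$ spaces, while controlling the integrability and regularity of $u_t$, $\nabla\log u_t$ and $\Delta u_t$ along the flow --- this is exactly the machinery of \cite{EKS}. A secondary, more routine, point is the limit $t\to\infty$, where $H(t),I(t)\to0$ should be quantified using the Poincar\'e spectral gap available for $K>0$, together with the approximation reducing to regular initial data.
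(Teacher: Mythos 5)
Your argument is essentially correct, but note that the paper itself does not prove this theorem: it is quoted directly from \cite{EKS}*{Corollary 3.29} and \cite{BGL}*{Theorem 6.8.1}, so the only meaningful comparison is with those sources --- and your proof is in substance the proof given there. The route in \cite{BGL} is exactly your master inequality $I'(t)\leq -2K I(t)-\tfrac{2}{N}I(t)^2$ (de Bruijn's identity, the dimensional Bochner/$\Gamma_2$ bound applied to $\log u_t$, and Jensen applied to the probability measure $u_t\m$), followed by integrating the resulting Riccati-type differential inequality against $H'=-I$; your monotone quantity $A(t)=(I(t)+KN)\exp(-\tfrac{2}{N}H(t))$ is an equivalent and rather clean repackaging of that integration. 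The genuinely delicate technical content --- justifying $I'(t)=-2\int_X u_t\,\Gamma_2(\log u_t)\,d\m$ and the dimensional, measure-valued Bochner inequality in the non-smooth setting, with the required regularity of $u_t$ along the flow --- is precisely what \cite{EKS} supplies, and you correctly flag it as imported machinery rather than claiming it.

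One point deserves more care than the phrase ``we may normalize $\m$ to a probability measure.'' The inequality is not invariant under scaling $\m\mapsto c\m$: one has $\Ent_{c\m}(\nu)=\Ent_{\m}(\nu)-\log c$ while $I_{c\m}(\nu)=I_{\m}(\nu)$, so the statement as literally written fails for small $c$. Both cited sources assume $\m(X)=1$, and your normalization is therefore not a harmless reduction but the identification of the implicit hypothesis under which the theorem holds; this should be stated explicitly. Finally, the limit $A(t)\to KN$ can be made quantitative without a separate ergodicity discussion: dropping the $I^2$ term in the master inequality gives $I(t)\leq I(0)e^{-2Kt}$, and integrating de Bruijn's identity then yields $H(t)\to 0$ once one knows $u_t\to 1$, which follows from the spectral gap as you say.
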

Since $\RCD(K,N)$ space for $K>0$ is an $\RCD(0,N)$ space, we obtain the following. 
 \begin{thm}[Uncertainty principle for positively curved spaces]
  Let $(X,d,\calH^N)$ be a non-collapsing $\RCD(K,N)$ space for $K>0$. Then for $\nu\in\calD(\Ent_{\calH^N})$ with $\sfb(\nu)\cap \calR_N\neq\emptyset$, 
  \begin{align}
   \Var(\nu)\left(1+\frac{1}{KN}I_{\calH^N}(\nu)\right)\geq \frac{N}{2\pi e}.\label{eq:uncertainKN}
  \end{align}
 \end{thm}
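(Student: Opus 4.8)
The plan is to combine the two displayed inequalities that immediately precede the statement: the $N$-Log Sobolev inequality (\ref{eq:NlogSob}) and Shannon's inequality on non-collapsing $\RCD(0,N)$ spaces (Theorem \ref{thm:shannonRCD}). The crucial observation is that an $\RCD(K,N)$ space with $K>0$ is in particular an $\RCD(0,N)$ space, and weakening the curvature bound from $K$ to $0$ leaves the underlying metric measure structure untouched; in particular the regular set $\calR_N^*$ and the non-collapsing hypothesis are preserved. Hence, under the standing assumption $\sfb(\nu)\cap\calR_N^*\neq\emptyset$ together with $\nu\in\calD(\Ent_{\calH^N})$, Theorem \ref{thm:shannonRCD} applies verbatim to $\nu$ on the same space viewed as $\RCD(0,N)$.

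First I would rewrite Shannon's inequality in exponential form. Since $U_N(\nu)=\exp\!\left(-\tfrac{1}{N}\Ent_{\calH^N}(\nu)\right)>0$, squaring (\ref{eq:shannonnoncol}) and taking reciprocals yields
\begin{align}
 \exp\left(\frac{2}{N}\Ent_{\calH^N}(\nu)\right)\geq \frac{N}{2\pi e\,\Var(\nu)}.\notag
\end{align}
Next I would feed this lower bound into the $N$-Log Sobolev inequality. Rearranging (\ref{eq:NlogSob}) with $\m=\calH^N$ gives
\begin{align}
 1+\frac{1}{KN}I_{\calH^N}(\nu)\geq \exp\left(\frac{2}{N}\Ent_{\calH^N}(\nu)\right),\notag
\end{align}
so that chaining the two displays produces $1+\tfrac{1}{KN}I_{\calH^N}(\nu)\geq \tfrac{N}{2\pi e\,\Var(\nu)}$. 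Multiplying through by $\Var(\nu)$, which is strictly positive because $\nu\in\calD(\Ent_{\calH^N})$ forces $\nu\ll\calH^N$ and hence excludes Dirac masses, gives exactly (\ref{eq:uncertainKN}).

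There is essentially no analytic obstacle here; the content lies entirely in the two inputs, and the manipulation is the same algebraic chaining used in \cite{Suguro,OS}. The only points requiring a word of care are bookkeeping: if $I_{\calH^N}(\nu)=+\infty$ the inequality is trivial, so one may assume $\nu\in\calD(I_{\calH^N})$; and one should verify that the hypotheses of Theorem \ref{thm:shannonRCD} coincide with those assumed here, which they do. I therefore expect the proof to reduce to these two substitutions with no further difficulty.
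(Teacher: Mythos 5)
Your proposal is correct and matches the paper's intended argument: the paper omits the proof, remarking only that it proceeds as in Theorem \ref{thm:UPRCD}, i.e.\ by chaining Shannon's inequality (Theorem \ref{thm:shannonRCD}, applicable since $\RCD(K,N)$ with $K>0$ implies $\RCD(0,N)$) with the $N$-Log Sobolev inequality (\ref{eq:NlogSob}), which is exactly your two substitutions. Your bookkeeping remarks (positivity of $\Var(\nu)$, triviality when $I_{\calH^N}(\nu)=+\infty$) are sound and complete the argument.
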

The proof is a similar way of that of Theorem \ref{thm:UPRCD}. Hence we omit the proof.

%
%
\appendix
\section{Barycenters of heat kernels}
\renewcommand{\theequation}{A.\arabic{equation}}
\setcounter{equation}{0}
\subsection{Barycenters of the heat kernel of a non-collapsing space}
Set $X=[0,\pi]$, $d_X=d_E$ (Euclidean distance), and $\m_X(dx)=\calL^1(dx)=dx$ (1-dimensional Lebesgue measure on $X$). It is known that $(X,d_X,\m_X)$ is an $\RCD(0,1)$ space. On the product space $X\times [0,\pi]$, we define an equivalent relation $(x,t)\sim (x',s)$ if either $s=t=0$ or $s=t=\pi$ is held. We define 
\begin{align}
 &Y:=X\times [0,\pi]/_{\sim}\notag\\
 &d_Y((x,t),(x',s)):=\cos^{-1}\left(\cos(t)\cos(s)+\sin(t)\sin(s)\cos\left(d(x,x')\right)\right)\notag\\
 &\m_Y:=\frac{1}{2\pi}\sin t\,dxdt.\notag
\end{align}
Then a metric measure space $(Y,d_Y,\m_Y)$ is an $\RCD(1,2)$ space (see \cite{Kcone}). Note that $\RCD(1,2)$ space satisfies $\RCD(0,2)$ condition. One dimensional $\RCD$ spaces are classified, $[0,\ell]$ ($\ell>0$), $S^1(r)$ ($r>0$), $\R_{\geq0}$ or $\R$ (\cite{KL}). Hence $\dim_{ess}(Y,d_Y,\m_Y)=2$. Thus $(Y,d_Y,\m_Y)$ is a non-collapsing $\RCD(0,2)$ space (see \cite{BGHZ,Hnew} for more details). Note that $\m_Y$ is a Borel probability measure on $Y$. 
\medskip
\par Let $\bm{o}$ be an equivalent class of $X\times\{0\}$, one of two end points in $Y$. Since $d_Y(\bm{o},(x,t))=t$ for any $(x,t)\in Y$, 
\begin{align}
 W_2^2(\delta_{\bm{o}},\m_Y)=\frac{1}{2\pi}\int_0^{\pi}dx\int_0^{\pi}t^2\sin t\,dt=\frac{\pi^2-4}{2}.\label{eq:A1}
\end{align}
Take a point $y=(\pi/2,\pi/2)\in Y$. Then for any $(x,t)\in Y$, 
\begin{align}
 d_Y(y,\left(x,t\right))=\cos^{-1}\left(\sin t\sin x\right)\leq \frac{\pi}{2}-\sin t\sin x\notag
\end{align}
since $\sin t\sin x\geq 0$ and coming from the Taylor expansion of $\cos^{-1}$ at 0. Then we have 
\begin{align}
 W_2^2(\delta_y,\m_Y)&=\frac{1}{2\pi}\int_0^{\pi}\sin t\left(\int_0^{\pi}d_Y^2(y,(x,t))\,dx\right)\,dt\notag\\
 &\leq \frac{1}{2\pi}\int_0^{\pi}\sin t\left(\int_0^{\pi}\frac{\pi^2}{4}-\pi\sin t\sin x+\sin^2t\sin^2x\,dx\right)\,dt\notag\\
 &=\frac{\pi^2}{4}-\frac{\pi}{2}+\frac{1}{3}.\label{eq:A2}
\end{align} 
From (\ref{eq:A1}) and (\ref{eq:A2}), the difference between them is 
\begin{align}
 W_2^2(\delta_{\bm{o}},\m_Y)-W_2^2(\delta_{y},\m_Y)&\geq \frac{\pi^2-4}{2}-\left(\frac{\pi^2}{4}-\frac{\pi}{2}+\frac{1}{3}\right)\notag\\
 &=\frac{\pi^2}{4}+\frac{\pi}{2}-\frac{7}{3}\notag\\
 &\geq \frac{9}{4}+\frac{3}{2}-\frac{7}{3}>1>0.\notag
\end{align}
Consider $\mu_t(dy):=p(\bm{o},y,t)\m_Y(dy)$. It is known that $W_2(\mu_t,\m_Y)\rightarrow 0$ as $t\rightarrow \infty$. Hence for sufficiently large $t>0$, we have 
\begin{align}
 &\lv W_2^2(\delta_{\bm{o}},\mu_t)-W_2^2(\delta_{\bm{o}},\m_Y)\rv<\frac{1}{4}\notag\\
 &\lv W_2^2(\delta_y,\mu_t)-W_2^2(\delta_y,\m_Y)\rv<\frac{1}{4}.\notag
\end{align}
 Hence it holds 
\begin{align}
 W_2^2(\delta_{\bm{o}},\mu_t)-W_2^2(\delta_y,\mu_t)\geq W_2^2(\delta_{\bm{o}},\m_Y)-W_2^2(\delta_y,\m_Y)-\frac{1}{2}>\frac{1}{2}>0.\notag
\end{align}
This implies $\bm{o}\notin \sfb(\mu_t)$. 
\subsection{Barycenters of the heat kernels at the pole in $N$-metric measure cones}
In this subsection, we consider barycenters of the heat kernels at the pole in $N$-metric measure cones. Recall definition and the heat kernel in the following. 
\par Let $(Y,d_Y,\m_Y)$ be an $\RCD(N-2,N-1)$ space with $0<\di(Y,d_Y)\leq \pi$ for $N\geq 2$. Then $N$-metric measure cone $(C(Y),d_{C(Y)},\m_{C(Y)})$ over $(Y,d_Y,\m_Y)$ is defined by  
\begin{align}
 &C(Y):=[0,\infty)\times Y/(\{0\}\times Y),\notag\\
 &d_{C(Y)}((t_1,y_1),(t_2,y_2)):=\sqrt{t_1^2+t_2^2-2t_1t_2\cos(d_Y(y_1,y_2))},\notag\\
 &d\m_{C(Y)}(t,y):=t^{N-1}dt\otimes d\m_{Y}(y).\notag
\end{align}
We denote the pole of $C(Y)$ by $O:=[(0,Y)]$. Recall that 
\begin{enumerate}
 \item $(C(Y),d_{C(Y)},\m_{C(Y)})$ is an $\RCD(0,N)$ space,
 \item The heat kernel $p(O,(r,y),t)$ is written by 
 \begin{align}
  p(O,(r,y),t)=Ct^{-N/2}\exp\left(-\frac{d_{C(Y)}(O,(y,r))^2}{4t}\right), \notag
 \end{align}
 where 
 \begin{align}
  C=\frac{2^{1-N}}{N\m_{C(Y)}(B_1(O))\Gamma(N/2)}=\frac{2^{1-N}}{\m_Y(Y)\Gamma(N/2)}.\notag
 \end{align}
 By a simple calculation, we have (of course)
 \begin{align}
  &W_2^2(\mu_t,O)=2Nt,\notag\\
  &D_0=\sqrt{\pi e}\left(\frac{\m_{C(Y)}(B_1(O))}{\omega_N}\right)^{1/N}\notag\\
  &U_N(\mu_t)=2\sqrt{\pi et}\left(\frac{\m_{C(Y)}(B_1(O))}{\omega_N}\right)^{1/N}=\sqrt{\frac{2D_0^2}{N}W_2^2(\mu_t,O)},\notag
 \end{align} 
 where $d\mu_t(r,y)=p(O,(r,y),t)d\m_{C(Y)}(r,y)$. 
 Hence $N$-metric measure cone satisfies ($\ast$). 
 \smallskip
 \par Take a point $z=(r_0,y_0)\in C(Y)$. Then again by a calculation, we have 
 \begin{align}
  &W_2^2(\delta_z,\mu_t)=\int_{C(Y)}(r_0^2+r^2-r_0r\cos(d_Y(y_0,y)))p(O,(r,y),t)\,d\m_{C(Y)}(r,y)\notag\\
  &=r_0^2+2Nt-2r_0\int_{C(Y)}r\cos(d_Y(y_0,y))p(O,(r,y),t)\,d_{C(Y)}(r,y)\notag\\
  &=2Nt+r_0^2-2r_0\times 2\sqrt{t}\cdot\frac{\Gamma(N+1/2)}{\Gamma(N/2)}\cdot\frac{1}{\m_Y(Y)}\int_Y\cos(d_Y(y_0,y))\m_Y(dy)\notag\\
  &=2Nt+\left(r_0-2\sqrt{t}\cdot\frac{\Gamma(N+1/2)}{\Gamma(N/2)}\cdot\frac{1}{m_Y(Y)}\int_Y\cos(d_Y(y_0,y))\m_Y(dy)\right)^2\notag\\
  &-\frac{4t\Gamma(N+1/2)^2}{\Gamma(N/2)^2}\left(\frac{1}{\m_Y(Y)}\int_Y\cos(d_Y(y_0,y))\,\m_Y(dy)\right)^2.\notag
 \end{align}
 It is known (see the proof of Theorem 1.3 in \cite{ESrigid}) that 
 \begin{align}
  \int_Y\cos(d_Y(y_0,y))\,\m_Y(dy)\geq0.\notag
 \end{align}
 Hence $O\in \sfb(\mu_t)$ if and only if 
 \begin{align}
  \int_Y\cos(d_Y(y_0,y))\,\m_Y(dy)=0\tag{$\clubsuit$}
 \end{align} 
 for any $y_0\in Y$. 
 \smallskip
 \par The assumption ($\spadesuit$) implies $O\in \sfb(\mu_t)$ for small $t>0$. However, this assumption is equivalent to ($\clubsuit$). ($\clubsuit$) implies 
 \begin{align}
  \frac{1}{\m_Y(Y)^2}\int_Y\int_Y\cos(d_Y(y_0,y))\,\m_Y(dy_0)\m_Y(dy)=0.\tag{$\diamondsuit$}
 \end{align}
 By Corollary 1.4 in \cite{ESrigid}, ($\diamondsuit$) holds if and only if $Y$ is isomorphic to the sphere $\bbS^{N-1}$ with the round metric and a multiple of the volume measure. Therefore we have the following theorem. 
  \begin{thm}\label{thm:A}
   Let $(X,d,\m)$ be an $\RCD(0,N)$ space, which is not one point. There exists a probability measure $\nu\in\calP_2(X)$ with $x_0\in \sfb(\nu)$ such that ($\spadesuit$) holds if and only if $X$ is isomorphic to the $N$-dimensional Euclidean space $\R^N$ with the Euclidean distance function and a multiple of the $N$-dimensional Lebesgue measure. 
  \end{thm}
\end{enumerate}
Roughly speaking, it holds that 
\begin{align}
 (\spadesuit)\Leftrightarrow (X=\R^N)\Rightarrow (X\text{ is a cone })\Leftrightarrow (\ast).\notag
\end{align}

\section{Final remark}
Combining all results, finally we obtain the following theorem. 
 \begin{thm}\label{thm:B}
  Let $(X,d,\calH^N)$ be a non-collapsing $\RCD(0,N)$ space and $\nu\in\calP_2(X)$. Then 
  \begin{align}
   -\Ent_{\calH^N}(\nu)\leq \frac{N}{2}\log\left(\frac{2\pi e}{N}\Var(\nu)\right)\notag
  \end{align}
  holds. 
 \end{thm}
 \begin{proof}
  Take an arbitrary $\omega\in \calR_N$, and define $\mu_t^{\omega}(dx):=p(\omega,x,t)\calH^N(dx)$, $F^{\omega}(t)=\int_0^{t}U_N(\mu_s^{\omega})^{-1}\,ds$. By Lemma \ref{lem:key2} it holds $F^{\omega}(t)\geq \sqrt{t/\pi e}$. By (\ref{eq:thetaW2}), we have 
  \begin{align}
   U_N(\nu)&\leq \left.\frac{1}{F^{\omega}(t)}\left(t+\frac{1}{2N}W_2^2(\delta_{\omega},\nu)\right)\right\vert_{t=W_2^2(\delta_{\omega},\nu)/2N}\notag\\
   &\leq \sqrt{\pi e\cdot \frac{2N}{W_2^2(\delta_{\omega},\nu)}}\cdot \frac{W_2^2(\delta_{\omega},\nu)}{N}\notag\\
   &=\sqrt{\frac{2\pi e}{N}W_2^2(\delta_{\omega},\nu)}.\notag
  \end{align}
  Since $\calR_N$ is dense in $X$, we are able to take a sequence $\omega_n\in\calR_N$ such that $\omega_n\rightarrow \omega\in\sfb(\nu)$ as $n\rightarrow \infty$. Thus taking the limit $n\rightarrow \infty$, we obtain 
  \begin{align}
   U_N(\nu)&\leq \sqrt{\frac{2\pi e}{N}W_2^2(\delta_{\omega_n},\nu)}\notag\\
   &\rightarrow \sqrt{\frac{2\pi e}{N}W_2^2(\delta_{\omega},\nu)}=\sqrt{\frac{2\pi e}{N}\Var(\nu)}.\notag
  \end{align} 
 \end{proof}
 \begin{rem}
  Theorem \ref{thm:B} does not contradicts to Theorem 4.7 and Theorem \ref{thm:A}. Every $N$-metric measure cone $(C(Y),d_{C(Y)},\m_{C(Y)})$ over $\RCD(N-2,N-1)$ space $(Y,d_Y,\m_Y)$ satisfies 
  \begin{align}
   U_N(\mu_t)=\sqrt{\frac{2D_0^2}{N}W_2^2(\mu_t,O)},\notag
  \end{align}
  where 
  \begin{align}
   D_0=\sqrt{\pi e}\left(\frac{\m_{C(Y)}(B_1(O))}{\omega_N}\right)^{1/N}\notag
  \end{align}
  as seen in subsection A.2 (we use the same notation). If $(C(Y),d_{C(Y)},\m_{C(Y)})$ is non-collapsing, $D_0\leq \sqrt{\pi e}$ and the equality holds if and only if $O\in\calR_N$. Hence non-collapsing $N$-metric measure cone which is not isomorphic to $\R^N$ satisfies 
  \begin{align}
   U_N(\mu_t)=\sqrt{\frac{2D_0^2}{N}W_2^2(\mu_t,O)}\leq \sqrt{\frac{2\pi e}{N}\Var(\nu)}.\notag
  \end{align}  
 \end{rem}

\section*{Acknowledgement}
The author would like to thank Professors Kazuhiro Kuwae, Masashi Misawa, Takeshi Suguro, Shouhei Honda, and Doctor Wataru Kumagai for their helpful comments and fruitful discussions. Especially he express gratitude to Shouhei Honda for pointing out Remark \ref{rem:asympsharp} and to Dr Hiroshi Tsuji for suggesting Theorem B.1. He is partly supported by JSPS KAKENHI Grant Numbers JP22K03291, and JP21K03238.   
\smallskip
\par {\bf Statements and Declarations.} No conflicts of interest or competing interests.

\begin{bibdiv}
\begin{biblist}

\bib{AGMR}{article}{
   author={Ambrosio, Luigi},
   author={Gigli, Nicola},
   author={Mondino, Andrea},
   author={Rajala, Tapio},
   title={Riemannian Ricci curvature lower bounds in metric measure spaces
   with $\sigma$-finite measure},
   journal={Trans. Amer. Math. Soc.},
   volume={367},
   date={2015},
   number={7},
   pages={4661--4701},
   issn={0002-9947},
   review={\MR{3335397}},
   doi={10.1090/S0002-9947-2015-06111-X},
}

\bib{AGS}{article}{
   author={Ambrosio, Luigi},
   author={Gigli, Nicola},
   author={Savar\'{e}, Giuseppe},
   title={Calculus and heat flow in metric measure spaces and applications
   to spaces with Ricci bounds from below},
   journal={Invent. Math.},
   volume={195},
   date={2014},
   number={2},
   pages={289--391},
   issn={0020-9910},
   review={\MR{3152751}},
   doi={10.1007/s00222-013-0456-1},
}

\bib{AGSmm}{article}{
   author={Ambrosio, Luigi},
   author={Gigli, Nicola},
   author={Savar\'{e}, Giuseppe},
   title={Metric measure spaces with Riemannian Ricci curvature bounded from
   below},
   journal={Duke Math. J.},
   volume={163},
   date={2014},
   number={7},
   pages={1405--1490},
   issn={0012-7094},
   review={\MR{3205729}},
   doi={10.1215/00127094-2681605},
}




\bib{AHT}{article}{
   author={Ambrosio, Luigi},
   author={Honda, Shouhei},
   author={Tewodrose, David},
   title={Short-time behavior of the heat kernel and Weyl's law on ${\rm
   RCD}^*(K,N)$ spaces},
   journal={Ann. Global Anal. Geom.},
   volume={53},
   date={2018},
   number={1},
   pages={97--119},
   issn={0232-704X},
   review={\MR{3746517}},
   doi={10.1007/s10455-017-9569-x},
}

\bib{AMSbe}{article}{
   author={Ambrosio, Luigi},
   author={Mondino, Andrea},
   author={Savar\'{e}, Giuseppe},
   title={On the Bakry-\'{E}mery condition, the gradient estimates and the
   local-to-global property of $\mathsf{RCD}^*(K,N)$ metric measure spaces},
   journal={J. Geom. Anal.},
   volume={26},
   date={2016},
   number={1},
   pages={24--56},
   issn={1050-6926},
   review={\MR{3441502}},
   doi={10.1007/s12220-014-9537-7},
}

\bib{AMSnon}{article}{
   author={Ambrosio, Luigi},
   author={Mondino, Andrea},
   author={Savar\'{e}, Giuseppe},
   title={Nonlinear diffusion equations and curvature conditions in metric
   measure spaces},
   journal={Mem. Amer. Math. Soc.},
   volume={262},
   date={2019},
   number={1270},
   pages={v+121},
   issn={0065-9266},
   isbn={978-1-4704-3913-2},
   isbn={978-1-4704-5513-2},
   review={\MR{4044464}},
   doi={10.1090/memo/1270},
}


\bib{BKT}{article}{
   author={Balogh, Zolt\'{a}n M.},
   author={Krist\'{a}ly, Alexandru},
   author={Tripaldi, Francesca},
   title={Sharp log-Sobolev inequalities in $\mathsf{CD}(0,N)$ spaces with
   applications},
   journal={J. Funct. Anal.},
   volume={286},
   date={2024},
   number={2},
   pages={Paper No. 110217, 41},
   issn={0022-1236},
   review={\MR{4665494}},
   doi={10.1016/j.jfa.2023.110217},
}

\bib{BaSt}{article}{
   author={Bacher, Kathrin},
   author={Sturm, Karl-Theodor},
   title={Localization and tensorization properties of the
   curvature-dimension condition for metric measure spaces},
   journal={J. Funct. Anal.},
   volume={259},
   date={2010},
   number={1},
   pages={28--56},
   issn={0022-1236},
   review={\MR{2610378}},
   doi={10.1016/j.jfa.2010.03.024},
}

\bib{BGL}{book}{
   author={Bakry, Dominique},
   author={Gentil, Ivan},
   author={Ledoux, Michel},
   title={Analysis and geometry of Markov diffusion operators},
   series={Grundlehren der mathematischen Wissenschaften [Fundamental
   Principles of Mathematical Sciences]},
   volume={348},
   publisher={Springer, Cham},
   date={2014},
   pages={xx+552},
   isbn={978-3-319-00226-2},
   isbn={978-3-319-00227-9},
   review={\MR{3155209}},
   doi={10.1007/978-3-319-00227-9},
}

\bib{BGHZ}{article}{
   author={Brena, Camillo},
   author={Gigli, Nicola},
   author={Honda, Shouhei},
   author={Zhu, Xingyu},
   title={Weakly non-collapsed RCD spaces are strongly non-collapsed},
   journal={J. Reine Angew. Math.},
   volume={794},
   date={2023},
   pages={215--252},
   issn={0075-4102},
   review={\MR{4529413}},
   doi={10.1515/crelle-2022-0071},
}

\bib{BS}{article}{
   author={Bru\'{e}, Elia},
   author={Semola, Daniele},
   title={Constancy of the dimension for ${\rm RCD}(K,N)$ spaces via
   regularity of Lagrangian flows},
   journal={Comm. Pure Appl. Math.},
   volume={73},
   date={2020},
   number={6},
   pages={1141--1204},
   issn={0010-3640},
   review={\MR{4156601}},
   doi={10.1002/cpa.21849},
}

\bib{CMnew}{article}{
   author={Cavalletti, Fabio},
   author={Mondino, Andrea},
   title={New formulas for the Laplacian of distance functions and
   applications},
   journal={Anal. PDE},
   volume={13},
   date={2020},
   number={7},
   pages={2091--2147},
   issn={2157-5045},
   review={\MR{4175820}},
   doi={10.2140/apde.2020.13.2091},
}

\bib{CC1}{article}{
   author={Cheeger, Jeff},
   author={Colding, Tobias H.},
   title={On the structure of spaces with Ricci curvature bounded below. I},
   journal={J. Differential Geom.},
   volume={46},
   date={1997},
   number={3},
   pages={406--480},
   issn={0022-040X},
   review={\MR{1484888}},
}

\bib{CC2}{article}{
   author={Cheeger, Jeff},
   author={Colding, Tobias H.},
   title={On the structure of spaces with Ricci curvature bounded below. II},
   journal={J. Differential Geom.},
   volume={54},
   date={2000},
   number={1},
   pages={13--35},
   issn={0022-040X},
   review={\MR{1815410}},
}

\bib{CC3}{article}{
   author={Cheeger, Jeff},
   author={Colding, Tobias H.},
   title={On the structure of spaces with Ricci curvature bounded below.
   III},
   journal={J. Differential Geom.},
   volume={54},
   date={2000},
   number={1},
   pages={37--74},
   issn={0022-040X},
   review={\MR{1815411}},
}

\bib{CMglobal}{article}{
   author={Cavalletti, Fabio},
   author={Milman, Emanuel},
   title={The globalization theorem for the curvature-dimension condition},
   journal={Invent. Math.},
   volume={226},
   date={2021},
   number={1},
   pages={1--137},
   issn={0020-9910},
   review={\MR{4309491}},
   doi={10.1007/s00222-021-01040-6},
}

\bib{DGcone}{article}{
   author={De Philippis, Guido},
   author={Gigli, Nicola},
   title={From volume cone to metric cone in the nonsmooth setting},
   journal={Geom. Funct. Anal.},
   volume={26},
   date={2016},
   number={6},
   pages={1526--1587},
   issn={1016-443X},
   review={\MR{3579705}},
   doi={10.1007/s00039-016-0391-6},
}

\bib{DG}{article}{
   author={De Philippis, Guido},
   author={Gigli, Nicola},
   title={Non-collapsed spaces with Ricci curvature bounded from below},
   language={English, with English and French summaries},
   journal={J. \'{E}c. polytech. Math.},
   volume={5},
   date={2018},
   pages={613--650},
   issn={2429-7100},
   review={\MR{3852263}},
   doi={10.5802/jep.80},
}

\bib{DMR}{article}{
   author={De Philippis, Guido},
   author={Marchese, Andrea},
   author={Rindler, Filip},
   title={On a conjecture of Cheeger},
   conference={
      title={Measure theory in non-smooth spaces},
   },
   book={
      series={Partial Differ. Equ. Meas. Theory},
      publisher={De Gruyter Open, Warsaw},
   },
   date={2017},
   pages={145--155},
   review={\MR{3701738}},
}

\bib{DT}{article}{
   author={Dall'Ara, Gian Maria},
   author={Trevisan, Dario},
   title={Uncertainty inequalities on groups and homogeneous spaces via
   isoperimetric inequalities},
   journal={J. Geom. Anal.},
   volume={25},
   date={2015},
   number={4},
   pages={2262--2283},
   issn={1050-6926},
   review={\MR{3427124}},
   doi={10.1007/s12220-014-9512-3},
}


\bib{E}{article}{
   author={Erb, Wolfgang},
   title={Uncertainty principles on compact Riemannian manifolds},
   journal={Appl. Comput. Harmon. Anal.},
   volume={29},
   date={2010},
   number={2},
   pages={182--197},
   issn={1063-5203},
   review={\MR{2652457}},
   doi={10.1016/j.acha.2009.08.012},
}

\bib{EKS}{article}{
   author={Erbar, Matthias},
   author={Kuwada, Kazumasa},
   author={Sturm, Karl-Theodor},
   title={On the equivalence of the entropic curvature-dimension condition
   and Bochner's inequality on metric measure spaces},
   journal={Invent. Math.},
   volume={201},
   date={2015},
   number={3},
   pages={993--1071},
   issn={0020-9910},
   review={\MR{3385639}},
   doi={10.1007/s00222-014-0563-7},
}

\bib{ESrigid}{article}{
   author={Erbar, Matthias},
   author={Sturm, Karl-Theodor},
   title={Rigidity of cones with bounded Ricci curvature},
   journal={J. Eur. Math. Soc. (JEMS)},
   volume={23},
   date={2021},
   number={1},
   pages={219--235},
   issn={1435-9855},
   review={\MR{4186467}},
   doi={10.4171/jems/1010},
}

\bib{FS}{article}{
   author={Folland, Gerald B.},
   author={Sitaram, Alladi},
   title={The uncertainty principle: a mathematical survey},
   journal={J. Fourier Anal. Appl.},
   volume={3},
   date={1997},
   number={3},
   pages={207--238},
   issn={1069-5869},
   review={\MR{1448337}},
   doi={10.1007/BF02649110},
}



\bib{G}{article}{
   author={Gigli, Nicola},
   title={On the differential structure of metric measure spaces and
   applications},
   journal={Mem. Amer. Math. Soc.},
   volume={236},
   date={2015},
   number={1113},
   pages={vi+91},
   issn={0065-9266},
   isbn={978-1-4704-1420-7},
   review={\MR{3381131}},
   doi={10.1090/memo/1113},
}

\bib{GMS}{article}{
   author={Gigli, Nicola},
   author={Mondino, Andrea},
   author={Savar\'{e}, Giuseppe},
   title={Convergence of pointed non-compact metric measure spaces and
   stability of Ricci curvature bounds and heat flows},
   journal={Proc. Lond. Math. Soc. (3)},
   volume={111},
   date={2015},
   number={5},
   pages={1071--1129},
   issn={0024-6115},
   review={\MR{3477230}},
   doi={10.1112/plms/pdv047},
}

\bib{GMos}{article}{
   author={Gigli, Nicola},
   author={Mosconi, Sunra},
   title={The abstract Lewy-Stampacchia inequality and applications},
   journal={J. Math. Pures Appl. (9)},
   volume={104},
   date={2015},
   number={2},
   pages={258--275},
   issn={0021-7824},
   review={\MR{3365829}},
   doi={10.1016/j.matpur.2015.02.007},
}

\bib{GP}{article}{
   author={Gigli, Nicola},
   author={Pasqualetto, Enrico},
   title={Behaviour of the reference measure on $\mathsf{RCD}$ spaces under
   charts},
   journal={Comm. Anal. Geom.},
   volume={29},
   date={2021},
   number={6},
   pages={1391--1414},
   issn={1019-8385},
   review={\MR{4367429}},
   doi={10.4310/CAG.2021.v29.n6.a3},
}

\bib{GPbook}{book}{
   author={Gigli, Nicola},
   author={Pasqualetto, Enrico},
   title={Lectures on nonsmooth differential geometry},
   series={SISSA Springer Series},
   volume={2},
   publisher={Springer, Cham},
   date={[2020] \copyright 2020},
   pages={xi+204},
   isbn={978-3-030-38612-2},
   isbn={978-3-030-38613-9},
   review={\MR{4321459}},
   doi={10.1007/978-3-030-38613-9},
}

\bib{GV}{article}{
   author={Gigli, Nicola},
   author={Violo, Ivan Yuri},
   title={Monotonicity formulas for harmonic functions in ${\rm RCD}(0,N)$
   spaces},
   journal={J. Geom. Anal.},
   volume={33},
   date={2023},
   number={3},
   pages={Paper No. 100, 89},
   issn={1050-6926},
   review={\MR{4533514}},
   doi={10.1007/s12220-022-01131-7},
}

\bib{HanX}{article}{
   author={Han, Bang-Xian},
   author={Xu, Zhe-Feng},
   title={Sharp uncertainty principles on metric measure spaces},
   journal={Calc. Var. Partial Differential Equations},
   volume={63},
   date={2024},
   number={4},
   pages={Paper No. 104, 16},
   issn={0944-2669},
   review={\MR{4732304}},
   doi={10.1007/s00526-024-02705-9},
}

\bib{Hnew}{article}{
   author={Honda, Shouhei},
   title={New differential operator and noncollapsed RCD spaces},
   journal={Geom. Topol.},
   volume={24},
   date={2020},
   number={4},
   pages={2127--2148},
   issn={1465-3060},
   review={\MR{4173928}},
   doi={10.2140/gt.2020.24.2127},
}


\bib{HP}{article}{
   author={Honda, Shouhei},
   author={Peng, Yuanlin},
   title={Sharp gradient estimate, rigidity and almost rigidity of Green functions on non-parabolic $\mathrm{RCD}(0,N)$ spaces},
   journal={Proc. Roy. Soc. Edinburgh Sect. A},
   pages={1--54},
   doi={10.1017/prm.2024.131},
}

\bib{HKZ}{article}{
   author={Huang, Libing},
   author={Krist\'{a}ly, Alexandru},
   author={Zhao, Wei},
   title={Sharp uncertainty principles on general Finsler manifolds},
   journal={Trans. Amer. Math. Soc.},
   volume={373},
   date={2020},
   number={11},
   pages={8127--8161},
   issn={0002-9947},
   review={\MR{4169684}},
   doi={10.1090/tran/8178},
}

\bib{IK}{article}{
   author={Ishige, Kazuhiro},
   author={Kawakami, Tatsuki},
   title={Global solutions of the heat equation with a nonlinear boundary
   condition},
   journal={Calc. Var. Partial Differential Equations},
   volume={39},
   date={2010},
   number={3-4},
   pages={429--457},
   issn={0944-2669},
   review={\MR{2729307}},
   doi={10.1007/s00526-010-0316-4},
}


\bib{JLZ}{article}{
   author={Jiang, Renjin},
   author={Li, Huaiqian},
   author={Zhang, Huichun},
   title={Heat kernel bounds on metric measure spaces and some applications},
   journal={Potential Anal.},
   volume={44},
   date={2016},
   number={3},
   pages={601--627},
   issn={0926-2601},
   review={\MR{3489857}},
   doi={10.1007/s11118-015-9521-2},
}

\bib{Kcone}{article}{
   author={Ketterer, Christian},
   title={Cones over metric measure spaces and the maximal diameter theorem},
   language={English, with English and French summaries},
   journal={J. Math. Pures Appl. (9)},
   volume={103},
   date={2015},
   number={5},
   pages={1228--1275},
   issn={0021-7824},
   review={\MR{3333056}},
   doi={10.1016/j.matpur.2014.10.011},
}

\bib{KBishop}{article}{
   author={Kitabeppu, Yu},
   title={A Bishop-type inequality on metric measure spaces with Ricci
   curvature bounded below},
   journal={Proc. Amer. Math. Soc.},
   volume={145},
   date={2017},
   number={7},
   pages={3137--3151},
   issn={0002-9939},
   review={\MR{3637960}},
   doi={10.1090/proc/13517},
}

\bib{KL}{article}{
   author={Kitabeppu, Yu},
   author={Lakzian, Sajjad},
   title={Characterization of low dimensional $RCD^*(K,N)$ spaces},
   journal={Anal. Geom. Metr. Spaces},
   volume={4},
   date={2016},
   number={1},
   pages={187--215},
   review={\MR{3550295}},
   doi={10.1515/agms-2016-0007},
}

\bib{Kr}{article}{
   author={Krist\'{a}ly, Alexandru},
   title={Sharp uncertainty principles on Riemannian manifolds: the
   influence of curvature},
   language={English, with English and French summaries},
   journal={J. Math. Pures Appl. (9)},
   volume={119},
   date={2018},
   pages={326--346},
   issn={0021-7824},
   review={\MR{3862150}},
   doi={10.1016/j.matpur.2017.09.002},
}


\bib{KM}{article}{
   author={Kell, Martin},
   author={Mondino, Andrea},
   title={On the volume measure of non-smooth spaces with Ricci curvature
   bounded below},
   journal={Ann. Sc. Norm. Super. Pisa Cl. Sci. (5)},
   volume={18},
   date={2018},
   number={2},
   pages={593--610},
   issn={0391-173X},
   review={\MR{3801291}},
}

\bib{KO}{article}{
   author={Kombe, Ismail},
   author={\"{O}zaydin, Murad},
   title={Hardy-Poincar\'{e}, Rellich and uncertainty principle inequalities on
   Riemannian manifolds},
   journal={Trans. Amer. Math. Soc.},
   volume={365},
   date={2013},
   number={10},
   pages={5035--5050},
   issn={0002-9947},
   review={\MR{3074365}},
   doi={10.1090/S0002-9947-2013-05763-7},
}

\bib{Li}{article}{
   author={Li, Zhenhao},
   title={The globalization theorem for ${\rm CD}(K, N)$ on locally finite
   spaces},
   journal={Ann. Mat. Pura Appl. (4)},
   volume={203},
   date={2024},
   number={1},
   pages={49--70},
   issn={0373-3114},
   review={\MR{4685719}},
   doi={10.1007/s10231-023-01352-9},
}

\bib{LV}{article}{
   author={Lott, John},
   author={Villani, C\'{e}dric},
   title={Ricci curvature for metric-measure spaces via optimal transport},
   journal={Ann. of Math. (2)},
   volume={169},
   date={2009},
   number={3},
   pages={903--991},
   issn={0003-486X},
   review={\MR{2480619}},
   doi={10.4007/annals.2009.169.903},
}

\bib{MM}{article}{
   author={Mart\'{\i}n, Joaquim},
   author={Milman, Mario},
   title={Isoperimetric weights and generalized uncertainty inequalities in
   metric measure spaces},
   journal={J. Funct. Anal.},
   volume={270},
   date={2016},
   number={9},
   pages={3307--3343},
   issn={0022-1236},
   review={\MR{3475459}},
   doi={10.1016/j.jfa.2016.02.016},
}

\bib{MN}{article}{
   author={Mondino, Andrea},
   author={Naber, Aaron},
   title={Structure theory of metric measure spaces with lower Ricci
   curvature bounds},
   journal={J. Eur. Math. Soc. (JEMS)},
   volume={21},
   date={2019},
   number={6},
   pages={1809--1854},
   issn={1435-9855},
   review={\MR{3945743}},
   doi={10.4171/JEMS/874},
}

\bib{OS}{article}{
   author={Ogawa, Takayoshi},
   author={Seraku, Kento},
   title={Logarithmic Sobolev and Shannon's inequalities and an application
   to the uncertainty principle},
   journal={Commun. Pure Appl. Anal.},
   volume={17},
   date={2018},
   number={4},
   pages={1651--1669},
   issn={1534-0392},
   review={\MR{3842878}},
   doi={10.3934/cpaa.2018079},
}

\bib{OST}{article}{
   author={Okoudjou, Kasso A.},
   author={Saloff-Coste, Laurent},
   author={Teplyaev, Alexander},
   title={Weak uncertainty principle for fractals, graphs and metric measure
   spaces},
   journal={Trans. Amer. Math. Soc.},
   volume={360},
   date={2008},
   number={7},
   pages={3857--3873},
   issn={0002-9947},
   review={\MR{2386249}},
   doi={10.1090/S0002-9947-08-04472-3},
}

\bib{RS}{article}{
   author={Rajala, Tapio},
   author={Sturm, Karl-Theodor},
   title={Non-branching geodesics and optimal maps in strong
   $CD(K,\infty)$-spaces},
   journal={Calc. Var. Partial Differential Equations},
   volume={50},
   date={2014},
   number={3-4},
   pages={831--846},
   issn={0944-2669},
   review={\MR{3216835}},
   doi={10.1007/s00526-013-0657-x},
}

\bib{Sh}{article}{
   author={Shannon, C. E.},
   title={A mathematical theory of communication},
   journal={Bell System Tech. J.},
   volume={27},
   date={1948},
   pages={379--423, 623--656},
   issn={0005-8580},
   review={\MR{26286}},
   doi={10.1002/j.1538-7305.1948.tb01338.x},
}


\bib{St1}{article}{
   author={Sturm, Karl-Theodor},
   title={On the geometry of metric measure spaces. I},
   journal={Acta Math.},
   volume={196},
   date={2006},
   number={1},
   pages={65--131},
   issn={0001-5962},
   review={\MR{2237206}},
   doi={10.1007/s11511-006-0002-8},
}

\bib{St2}{article}{
   author={Sturm, Karl-Theodor},
   title={On the geometry of metric measure spaces. II},
   journal={Acta Math.},
   volume={196},
   date={2006},
   number={1},
   pages={133--177},
   issn={0001-5962},
   review={\MR{2237207}},
   doi={10.1007/s11511-006-0003-7},
}

\bib{Stcor}{article}{
   author={Sturm, Karl-Theodor},
   title={Correction to ``On the geometry of metric measure spaces. I''},
   journal={Acta Math.},
   volume={231},
   date={2023},
   number={2},
   pages={387--390},
   issn={0001-5962},
   review={\MR{4683373}},
}

\bib{Suguro}{article}{
   author={Suguro, Takeshi},
   title={Shannon's inequality for the R\'{e}nyi entropy and an application to
   the uncertainty principle},
   journal={J. Funct. Anal.},
   volume={283},
   date={2022},
   number={6},
   pages={Paper No. 109566, 26},
   issn={0022-1236},
   review={\MR{4442594}},
   doi={10.1016/j.jfa.2022.109566},
}


\end{biblist}
\end{bibdiv}

\end{document}